\documentclass[reqno,12pt]{amsart}
\textwidth=360pt
\textheight=615pt
\usepackage{amscd,amsfonts,amsmath,amssymb,amstext,amsthm}

\usepackage[utf8]{inputenc}
\usepackage[T2A]{fontenc}
\usepackage[russian,english,]{babel}
\usepackage{cite}
\usepackage[unicode]{hyperref}
\usepackage{color}
\usepackage{xcolor}
\usepackage{comment}
\usepackage{tikz}
\usepackage{wrapfig}
\usepackage[normalem]{ulem}
\addtocontents{toc}{\setcounter{tocdepth}{4}} 


\makeatletter
\def\@settitle{\begin{center}%
    \baselineskip14\p@\relax
    \bfseries
    \MakeUppercase{\@title}
  \end{center}
}
\makeatother

\newtheorem{theorem}{Theorem}[section]
\newtheorem{lemma}{\it Lemma}[section]
\newtheorem{proposition}{Proposition}[section]
\newtheorem{corollary}{\it Corollary}[section]
\theoremstyle{remark}

\newtheorem{remark}{Remark}[section]
\newtheorem{example}{\it Example}[section]
\theoremstyle{definition}
\newtheorem{assert}{Assertion}
\newtheorem{definition}{\it Definiton}[section]
\numberwithin{equation}{section}

\pagestyle{plain}

\def\Cn*{{\C^n}^*}
\def\CN*{{\C^N}^*}

\def\C{{\mathbb C}}
\def\Q{{\mathbb Q}}

\def\P{{\mathbb P}}

\def\Y{{\mathbb Y}}

\def\R{{\mathbb R}}
\def\Z{{\mathbb Z}}

\def\vol{{\rm vol}}
\def\E{\:{\rm e}}
\def\re{{\rm Re\:}}
\def\im{{\rm Im\:}}

\def\conv{{\rm conv}}


\begin{document}
\title{Around the ``Fundamental Theorem of Algebra''
}
{
\author{B. Kazarnovskii
}
\address {\noindent
Moscow Institute of Physics and
Technology (National Research University),
Higher School of Contemporary Mathematics
\newline
{\it kazbori@gmail.com}.}
}
\thanks {
The research is supported by MSHE RF GZ project}
\keywords{Fundamental Theorem of Algebra, random polynomial, mean number of zeros, Newton ellipsoid, exponential sum}

\begin{abstract}
The Fundamental Theorem of Algebra (FTA) asserts that every complex polynomial has as many complex roots, counted with multiplicities, as its degree.
A probabilistic analogue of this theorem for real roots of real polynomials—sometimes referred to as the Kac theorem—was
found between 1938 and 1943 by J. Littlewood, A. Offord, and M. Kac.
In this paper, we present several more versions of FTA:
Kac type FTA for Laurent polynomials in one and many variables,
Kac type FTA for polynomials
on complex reductive groups
 arising in the context of compact group representations
 (similar to Laurent polynomials arising in torus representation theory),
and FTA for exponential sums in one and many variables.
In the case of Laurent polynomials, the result, even in the one-dimensional case, is unexpected:
most of the zeros of a real Laurent polynomial are real.
\end{abstract}
\maketitle
\tableofcontents
\section{Introduction}\label{Intro}
A probabilistic refinement of the Fundamental Theorem of Algebra (FTA),
concerning the real roots of random real polynomials and sometimes referred to as
\emph{Kac theorem},
was 
found between 1938 and 1943 by J.~Littlewood, A.~Offord, and M.~Kac;
see~\cite{Littl1,Littl2,Littl3,KA}.
The underlying observation is elementary:
for any integer $m$, the probability that a root of a real polynomial of degree~$m$
is real is strictly positive.
In particular, for $m=1$ this probability equals~$1$.
\begin{theorem}[Kac]\label{thmKac}
Let $P$ be a random real polynomial of degree~$m$ in one variable whose coefficients
are independent Gaussian random variables with zero mean and unit variance.
Then, as $m\to\infty$, the expected number of real roots of $P$ is asymptotic to
$\frac{2}{\pi}\log m$.
Equivalently,
\[
\mathcal P(m)\asymp \frac{2}{\pi}\frac{\log m}{m},
\]
where $\mathcal P(m)$ denotes the probability that a root of $P$ is real.
\end{theorem}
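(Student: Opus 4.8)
The plan is to apply the Kac--Rice formula for the expected number of zeros of a smooth Gaussian field. Since $P(x)=\sum_{k=0}^{m}a_kx^k$ with the $a_k$ independent standard Gaussians, the map $x\mapsto P(x)$ is a centered Gaussian process on $\R$ with covariance
\[
K(x,y)=\mathbb E\bigl[P(x)P(y)\bigr]=\sum_{k=0}^{m}(xy)^k=\frac{1-(xy)^{m+1}}{1-xy}.
\]
For every $x$ and every $m\ge 1$ the covariance matrix of $\bigl(P(x),P'(x)\bigr)$ is nondegenerate (the vectors $(1,x,\dots,x^m)$ and $(0,1,\dots,mx^{m-1})$ are linearly independent), so the Kac--Rice formula applies and $\mathbb E[N_m]=\int_{\R}\rho(x)\,dx$, where the first intensity of the zero set of a centered Gaussian process is
\[
\rho(x)=\frac{1}{\pi}\sqrt{\;\frac{\partial^2}{\partial x\,\partial y}\log K(x,y)\Big|_{y=x}\;}.
\]
Writing $f(u)=\sum_{k=0}^{m}u^k$ and using $K(x,y)=f(xy)$, one gets $\dfrac{\partial^2}{\partial x\,\partial y}\log K\big|_{y=x}=\dfrac{f'(u)}{f(u)}+\dfrac{f''(u)f(u)-f'(u)^2}{f(u)^2}\,x^2$ at $u=x^2$, which makes the integrand fully explicit.

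Next I would localize the integral using two symmetries. Clearly $\rho(-x)=\rho(x)$. Moreover the reciprocal polynomial $x^mP(1/x)=\sum_k a_{m-k}x^k$ has the same law as $P$, and its real zeros are the reciprocals of those of $P$; hence $\int_{|x|>1}\rho=\int_{|x|<1}\rho$, and therefore
\[
\mathbb E[N_m]=4\int_0^1\rho(x)\,dx .
\]

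The asymptotic analysis is the heart of the matter, and the main obstacle is to handle the neighbourhoods of $x=\pm1$, where $\rho$ blows up. I would split $[0,1]$ at $x=1-1/m$. On the bulk part $[0,1-1/m]$ the terms containing $(xy)^{m+1}$ in $K$ are uniformly negligible, so $K(x,y)=(1-xy)^{-1}\bigl(1+o(1)\bigr)$, whence $\dfrac{\partial^2}{\partial x\,\partial y}\log K\big|_{y=x}=(1-x^2)^{-2}\bigl(1+o(1)\bigr)$, i.e.\ $\rho(x)=\dfrac{1}{\pi(1-x^2)}\bigl(1+o(1)\bigr)$, and
\[
\int_0^{1-1/m}\rho(x)\,dx=\frac{1}{2\pi}\log\frac{1+x}{1-x}\Big|_{0}^{1-1/m}\bigl(1+o(1)\bigr)\sim\frac{1}{2\pi}\log m .
\]
On the boundary layer $[1-1/m,1]$ one has $f(u)\asymp m$, $f'(u)\asymp m^2$, $f''(u)\asymp m^3$ for $u$ near $1$, whence $\rho(x)=O(m)$ on an interval of length $1/m$, a total contribution of $O(1)$. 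Adding the four copies yields $\mathbb E[N_m]=\dfrac{2}{\pi}\log m\,\bigl(1+o(1)\bigr)$, which is the first assertion.

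Finally, for the ``equivalently'' form: by the classical FTA, $P$ has, almost surely, exactly $m$ roots in $\C$ (simple, since the discriminant vanishes on a set of measure zero), so choosing one of them uniformly at random the probability that it is real equals $\mathbb E[N_m]/m$, giving $\mathcal P(m)\asymp\dfrac{2}{\pi}\dfrac{\log m}{m}$. The steps requiring genuine care are the verification of the Kac--Rice hypotheses (routine for polynomials) and, above all, a quantitative version of the bulk estimate together with the $O(1)$ bound on the boundary layer: the cutoff $1-1/m$ must be chosen so that the error in $K\approx(1-xy)^{-1}$ is controlled on the bulk while the layer remains thin enough to be negligible.
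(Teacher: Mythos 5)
The paper does not prove Theorem~\ref{thmKac}; it is stated as classical background and attributed to Littlewood, Offord, and Kac with citations to \cite{Littl1,Littl2,Littl3,KA}, so there is no in-paper proof to compare your attempt against. That said, your Kac--Rice sketch is, in substance, the standard modern rendition of Kac's original argument, and the structure is correct: the intensity $\rho(x)=\frac{1}{\pi}\sqrt{\partial_x\partial_y\log K\big|_{y=x}}$ with $K(x,y)=\sum_{k=0}^m(xy)^k$ specializes to Kac's explicit density
$\rho(x)=\frac{1}{\pi}\bigl((1-x^2)^{-2}-(m+1)^2x^{2m}(1-x^{2(m+1)})^{-2}\bigr)^{1/2}$,
the two symmetries $x\mapsto -x$ and $x\mapsto 1/x$ (via the reciprocal polynomial) legitimately reduce the computation to $4\int_0^1\rho$, and passing to $\mathcal P(m)=\mathbb E[N_m]/m$ at the end is correct since the total root count is deterministically $m$.

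The one quantitative slip is the choice of cutoff. At $x=1-1/m$ one has $(x^2)^{m+1}\to e^{-2}$, so the claim that $(xy)^{m+1}$ is \emph{uniformly} $o(1)$ on $[0,1-1/m]$ fails at the right end of your bulk interval. The fix is cheap and you already half-anticipate it: cut instead at $1-\omega_m/m$ with $\omega_m\to\infty$ and $\log\omega_m=o(\log m)$ (e.g.\ $\omega_m=\sqrt{\log m}$). On $[0,1-\omega_m/m]$ the correction is genuinely uniform, giving $\int\rho=\frac{1}{2\pi}\log m\,(1+o(1))$; on $[1-\omega_m/m,1-1/m]$ use the pointwise bound $\rho(x)\le\frac{1}{\pi(1-x^2)}$ (clear from the explicit density, since the subtracted term is nonnegative) to get a contribution $O(\log\omega_m)=o(\log m)$; and your $O(1)$ estimate on $[1-1/m,1]$ via $\rho=O(m)$ is fine. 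With that adjustment the argument closes and yields $\mathbb E[N_m]\sim\frac{2}{\pi}\log m$ as claimed.
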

\begin{remark}
For refined results on the distribution of the number of real roots of random real polynomials,
we refer to~\cite{EK} and the references therein.
\end{remark}
\begin{remark}
If the expected value of the $k$th coefficient is taken to be $\binom{m}{k}$,
then in Kac's theorem the logarithmic term $\log m$ is replaced by $\sqrt m$.
\end{remark}
In the present paper we establish Kac-type results for Laurent polynomials in one and several variables,
as well as for polynomials on complex reductive groups arising naturally in the representation theory
of compact groups.
The latter class may be viewed as an analogue of Laurent polynomials
associated with torus representations.
A notable feature of this setting is that,
in contrast with the classical polynomial case,
the probability that a root is real converges to a nonzero limit as the degree tends to infinity.
For example, in the one-variable Laurent case this limit equals $1/\sqrt3$;
see Corollary~\ref{corL1}(c).

In \S\ref{exp} we further formulate and prove an analogue of the FTA for systems of equations
defined by exponential sums in $\C^n$.
Since such systems have infinitely many solutions,
the corresponding statement takes the form of an explicit density formula
for the zero set.
This result is analogous to the Bernstein--Kushnirenko--Khovanskii formula
for polynomial systems.
The material of \S\ref{exp} is independent of \S\ref{Laur} and \S\ref{group}.
\section{Kac type theorem for Laurent polynomials}\label{Laur}
\subsection{Laurent polynomials of one variable}\label{Laur1}
Recall that a Laurent polynomial is a function on $\C\setminus0$
of the form
\[
P(z)=\sum_{m\in\Lambda} a_m z^m,
\]
where $\Lambda\subset\Z$ is a finite set, called the \emph{spectrum} of~$P$.

\begin{definition}\label{dfLaur}
\begin{enumerate}
\item
A Laurent polynomial $P$ is called \emph{real} if its restriction to the unit circle
$S\subset\C$ is real-valued.

\item
A root of $P$ lying on $S$ is called a \emph{real root} of the Laurent polynomial.
\end{enumerate}
\end{definition}
The following assertions are immediate consequences of Definition~\ref{dfLaur}.
\begin{lemma}\label{lmL1}
\begin{enumerate}
\item
A Laurent polynomial $P(z)=\sum a_k z^k$ is real if and only if
\[
a_k=\overline{a_{-k}}\qquad \text{for all }k\in\Z.
\]
In particular, the spectrum of a real Laurent polynomial is centrally symmetric.

\item
The set of roots of a real Laurent polynomial is invariant under the involution
$z\mapsto\bar z^{-1}$.

\item
Restriction to~$S$ defines an isomorphism between the space of real Laurent polynomials
with spectrum~$\Lambda$ and the space of trigonometric polynomials with the same spectrum,
namely functions of the form
\begin{equation}\label{tr}
\sum_{k\in\Lambda}\bigl(\alpha_k\cos(k\theta)+\beta_k\sin(k\theta)\bigr),
\qquad \alpha_k,\beta_k\in\R.
\end{equation}
\end{enumerate}
\end{lemma}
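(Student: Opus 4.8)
All three statements follow from a single elementary remark: on the unit circle $S$ one has $\bar z=z^{-1}$, so conjugating a Laurent polynomial restricted to $S$ amounts to conjugating its coefficients and performing the substitution $z\mapsto\bar z^{-1}$. The plan is to make this precise and then read off (1)--(3) in turn.

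For (1), I would write $z=e^{i\theta}$ and compute $\overline{P(e^{i\theta})}=\sum_k\overline{a_k}\,e^{-ik\theta}$; reindexing $k\mapsto -k$ turns this into $\sum_k\overline{a_{-k}}\,e^{ik\theta}$. Since a finite trigonometric sum $\sum_k c_ke^{ik\theta}$ that vanishes identically must have all $c_k=0$ (uniqueness of Fourier coefficients, or: a nonzero Laurent polynomial extends to a meromorphic function on $\C$ with only finitely many zeros), $P|_S$ is real-valued precisely when $a_k=\overline{a_{-k}}$ for every $k$. Central symmetry of the spectrum then drops out: if $a_k\neq 0$ then $a_{-k}=\overline{a_k}\neq 0$.

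For (2), using (1) I would establish the functional identity $\overline{P(\bar z^{-1})}=P(z)$ on $\C\setminus0$: indeed $\overline{P(\bar z^{-1})}=\sum_k\overline{a_k}\,z^{-k}=\sum_k\overline{a_{-k}}\,z^{k}=\sum_k a_kz^k=P(z)$. Consequently $P(z_0)=0$ if and only if $P(\bar z_0^{-1})=0$, which is exactly the asserted invariance (and the same identity shows the involution respects multiplicities, should that be wanted).

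For (3), restriction to $S$ is visibly linear and is injective by the Fourier-uniqueness remark used in (1). To identify its image with the space \eqref{tr}, I would pair the terms with indices $k$ and $-k$: writing $a_k=c_k+id_k$ with $c_k,d_k\in\R$, part (1) gives $a_ke^{ik\theta}+a_{-k}e^{-ik\theta}=2(c_k\cos k\theta-d_k\sin k\theta)$, while $a_0$ is real and supplies the constant term; conversely, substituting $\cos k\theta=\tfrac12(e^{ik\theta}+e^{-ik\theta})$ and $\sin k\theta=\tfrac1{2i}(e^{ik\theta}-e^{-ik\theta})$ into \eqref{tr} and collecting powers of $e^{i\theta}$ yields coefficients satisfying $a_k=\overline{a_{-k}}$, hence a real Laurent polynomial with spectrum $\Lambda$. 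Thus restriction is the required isomorphism. I expect no genuine obstacle here; the only things to watch are the index bookkeeping in the two reindexings and the (standard) fact that a nonzero Laurent polynomial does not vanish identically on $S$.
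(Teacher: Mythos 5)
Your proof is correct and fills in exactly the routine verifications that the paper leaves implicit (it declares the lemma an ``immediate consequence'' of Definition~\ref{dfLaur} without writing anything out). The key observations — Fourier-coefficient uniqueness for (1) and injectivity in (3), the identity $\overline{P(\bar z^{-1})}=P(z)$ for (2), and the $k\leftrightarrow-k$ pairing for (3) — are the natural ones, and the index bookkeeping checks out.
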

\begin{definition}[Random real Laurent polynomials]\label{dfTrig}
Let ${\rm Trig}(\Lambda)$ denote the space of trigonometric polynomials of the form~\eqref{tr}.
We regard ${\rm Trig}(\Lambda)$ as a finite-dimensional subspace of $L_2(S)$,
endowed with the inner product
\[
(f,g)=\int_S f(\theta)g(\theta)\,d\theta.
\]
Random real Laurent polynomials with spectrum~$\Lambda$
are defined by equipping ${\rm Trig}(\Lambda)$ with the Gaussian probability measure
associated with this $L_2$-metric.
\end{definition}
\begin{theorem}\label{thmL2}
Let $f_\Lambda$ be a random real Laurent polynomial with spectrum~$\Lambda$.
Then:

\begin{enumerate}
\item[(a)]
The expected number of real roots of $f_\Lambda$ equals
\[
2\cdot\sqrt{\frac{1}{\#\Lambda}\sum_{\lambda\in\Lambda}\lambda^2},
\]
where $\#\Lambda$ denotes the cardinality of~$\Lambda$.

\item[(b)]
The probability $\mathcal P(\Lambda)$ that a root of $f_\Lambda$ is real is given by
\[
\mathcal P(\Lambda)
=
\frac{1}{\deg(f_\Lambda)}
\sqrt{\frac{1}{\#\Lambda}\sum_{\lambda\in\Lambda}\lambda^2},
\]
where $\deg(f_\Lambda)=\max_{\lambda\in\Lambda}|\lambda|$.
\end{enumerate}
\end{theorem}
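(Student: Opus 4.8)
The plan is to pass, via Lemma~\ref{lmL1}(iii), from the real roots of $f_\Lambda$ to the zeros on $[0,2\pi)$ of a \emph{stationary} Gaussian process, and then to apply the Kac--Rice formula, which in the stationary case reduces to a one-line computation. Parametrizing $S$ by $e^{i\theta}$, the real roots of $f_\Lambda$ are exactly the zeros of the random trigonometric polynomial $g(\theta)=\sum_{k\in\Lambda}(\alpha_k\cos k\theta+\beta_k\sin k\theta)$ obtained by restricting $f_\Lambda$ to $S$. By Definition~\ref{dfTrig} the coefficient vector is Gaussian with covariance the Gram matrix of a basis of ${\rm Trig}(\Lambda)$ in $L_2(S)$; choosing the orthogonal basis $\{1\}\cup\{\cos k\theta,\ \sin k\theta:\ k\in\Lambda,\ k>0\}$ and using the central symmetry $\Lambda=-\Lambda$ (Lemma~\ref{lmL1}(i)) to recombine the $\pm k$ contributions (via $\cos kx\cos ky+\sin kx\sin ky=\cos(k(x-y))$), one finds that $g$ is a centered Gaussian process with covariance kernel depending only on $x-y$,
\[
\mathbf E\,g(x)g(y)=c\,\kappa(x-y),\qquad \kappa(t)=\sum_{k\in\Lambda}\cos(kt),
\]
where the constant $c>0$ is fixed by the normalization of $d\theta$ and plays no role below (only ratios of variances will enter). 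In particular $g$ is stationary.

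I would then apply the Kac--Rice formula. Since the kernel depends only on $x-y$, at every $\theta$ the variables $g(\theta)$ and $g'(\theta)$ are independent centered Gaussians with
\[
\mathbf E\,g(\theta)^2=c\,\kappa(0)=c\cdot\#\Lambda,\qquad
\mathbf E\,g'(\theta)^2=-c\,\kappa''(0)=c\sum_{k\in\Lambda}k^2 .
\]
Discarding the trivial case $\Lambda=\{0\}$, both variances are positive, so the joint law is nondegenerate and Kac--Rice gives
\[
\mathbf E\bigl(\#\{\theta\in[0,2\pi):\ g(\theta)=0\}\bigr)
=\int_0^{2\pi}\frac{\mathbf E\,|g'(\theta)|}{\sqrt{2\pi\,\mathbf E\,g(\theta)^2}}\,d\theta
=\frac1\pi\int_0^{2\pi}\sqrt{\frac{\mathbf E\,g'(\theta)^2}{\mathbf E\,g(\theta)^2}}\;d\theta
=2\sqrt{\frac1{\#\Lambda}\sum_{\lambda\in\Lambda}\lambda^2},
\]
using $\mathbf E\,|X|=\sqrt{2/\pi}\,(\mathbf E\,X^2)^{1/2}$ for a centered Gaussian $X$ and the fact that the integrand does not depend on $\theta$; the constant $c$ cancels. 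This is~(a). (Equivalently one may use the Edelman--Kostlan form of the formula: the expected number of zeros is $\pi^{-1}$ times the length of the image of $S$ under $\theta\mapsto v(\theta)/\|v(\theta)\|$, where $v(\theta)$ is the vector of basis functions, and stationarity makes this spherical curve move at constant speed.)

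For~(b) it remains to count all roots of $f_\Lambda$ in $\C\setminus0$. Put $m=\deg(f_\Lambda)=\max_{\lambda\in\Lambda}|\lambda|$. Central symmetry of the spectrum gives $\pm m\in\Lambda$, and for a random $f_\Lambda$ the coefficients $a_{\pm m}$ are a.s.\ nonzero; hence $z^{m}f_\Lambda(z)$ is an ordinary polynomial of degree exactly $2m$ with nonzero constant term, so $f_\Lambda$ has exactly $2m=2\deg(f_\Lambda)$ roots in $\C\setminus0$, a.s.\ all simple. Dividing the expectation in~(a) by $2\deg(f_\Lambda)$ yields the formula for $\mathcal P(\Lambda)$ in~(b).

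What needs genuine care is not the arithmetic but the validity of the Kac--Rice formula: that $(g(\theta),g'(\theta))$ has a nondegenerate joint distribution for every $\theta$ (immediate from stationarity once $\Lambda\neq\{0\}$), that $g$ has almost surely only simple zeros so that counting zeros without multiplicity does compute the expected number of real roots, and that passing to the closed curve $S$ produces no boundary contribution. Granting these standard facts, the argument is short precisely because stationarity makes the Kac--Rice density constant along $S$.
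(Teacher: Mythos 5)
Your proof is correct, but it takes a genuinely different (though closely related) route from the paper's. The paper passes through a Crofton-type formula (Proposition~\ref{prCr}): it sets up the evaluation map $\Theta\colon S\to{\rm Trig}(\Lambda)$, shows via Lemma~\ref{lmLin} that $|\Theta(s)|^2=\#\Lambda$ and $|\Theta'(s)|^2=\sum_\lambda\lambda^2$, and concludes that the expected number of real zeros equals $\frac1\pi\,\mathrm{length}(K_\Lambda)$, where $K_\Lambda$ is the normalized image $\Theta(S)/|\Theta(S)|$ on the unit sphere of ${\rm Trig}(\Lambda)$. You instead observe that $g$ is a stationary Gaussian process and apply Kac--Rice directly, reducing everything to the two numbers $\kappa(0)=\#\Lambda$ and $-\kappa''(0)=\sum\lambda^2$. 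These are two well-known faces of the same Edelman--Kostlan identity (your parenthetical remark is exactly the paper's computation: stationarity means the spherical curve $K_\Lambda$ has constant speed, so length equals $2\pi\cdot\mathrm{speed}$). What the paper's route buys is that Lemma~\ref{lmLin} and Corollary~\ref{cor1} are the exact pieces reused in \S\ref{Laur41} and \S\ref{group3} for the multidimensional and group-theoretic generalizations, where the integrand becomes a mixed volume of ellipsoids rather than a constant speed; your one-variable Kac--Rice calculation, while cleaner here, does not carry over as directly to those settings. Your treatment of part~(b) (the total root count $2\deg f_\Lambda$ via $z^m f_\Lambda(z)$ and a.s.\ nonvanishing of $a_{\pm m}$) is correct and actually supplies a detail the paper leaves implicit. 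The regularity caveats you flag at the end are indeed the right ones, and they are all discharged as you say once $\Lambda\ne\{0\}$ so the stationary covariance is nondegenerate.
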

\begin{corollary}\label{corL1}
Let $\Lambda=\{-m,\ldots,0,\ldots,m\}$, and let $f_m$ be a random real Laurent polynomial
with spectrum~$\Lambda$, i.e.\ a random real Laurent polynomial of degree~$m$.
Then:
\begin{enumerate}
\item[(a)]
The expected number of real roots of $f_m$ equals
\[
2\sqrt{\frac{m(m+1)}{3}}.
\]

\item[(b)]
The probability $\mathcal P(f_m)$ that a root of $f_m$ is real equals
\[
\sqrt{\frac{m+1}{3m}}.
\]

\item[(c)]
\[
\lim_{m\to\infty}\mathcal P(f_m)=\frac{1}{\sqrt3}.
\]
\end{enumerate}
\end{corollary}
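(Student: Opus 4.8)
The plan is to deduce all three parts directly from Theorem~\ref{thmL2} by specializing to the spectrum $\Lambda=\{-m,\ldots,0,\ldots,m\}$, so the only genuine work is an elementary evaluation of a power sum.

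First I would record the two quantities attached to this spectrum that appear in Theorem~\ref{thmL2}: its cardinality is $\#\Lambda=2m+1$, and, using the classical formula for the sum of the first $m$ squares,
\[
\sum_{\lambda\in\Lambda}\lambda^2=2\sum_{k=1}^{m}k^2=2\cdot\frac{m(m+1)(2m+1)}{6}=\frac{m(m+1)(2m+1)}{3}.
\]
Dividing by $\#\Lambda=2m+1$ produces the cancellation that is really the whole point of the corollary:
\[
\frac{1}{\#\Lambda}\sum_{\lambda\in\Lambda}\lambda^2=\frac{m(m+1)}{3}.
\]

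For part (a) I would substitute this into the formula of Theorem~\ref{thmL2}(a), obtaining that the expected number of real roots of $f_m$ equals $2\sqrt{m(m+1)/3}$. For part (b) I would observe that $\deg(f_\Lambda)=\max_{\lambda\in\Lambda}|\lambda|=m$ for this spectrum, and plug into Theorem~\ref{thmL2}(b): $\mathcal P(f_m)=\frac1m\sqrt{m(m+1)/3}=\sqrt{(m+1)/(3m)}$. For part (c) I would simply pass to the limit in this last expression, noting $(m+1)/(3m)\to 1/3$, which gives $\lim_{m\to\infty}\mathcal P(f_m)=1/\sqrt3$.

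I do not expect any real obstacle here: the corollary is a pure substitution once Theorem~\ref{thmL2} is in hand. The only point that merits a sentence of care is the identification of $\deg(f_\Lambda)$ in Theorem~\ref{thmL2}(b) as $\max_{\lambda\in\Lambda}|\lambda|$ (rather than, say, $\#\Lambda-1$ or a generic root count); for the full symmetric spectrum all of these happen to equal $m$, so no ambiguity arises.
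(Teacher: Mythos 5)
Your proposal is correct and is exactly the computation the paper has in mind: the paper's one-line proof ``follows from $\sum_{k=1}^m k^2 = m(m+1)(2m+1)/6$'' is shorthand for precisely the substitution into Theorem~\ref{thmL2} that you carry out, with the cancellation of $2m+1$ being the key step. Nothing is missing and no alternative route is taken.
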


\begin{proof}
This follows from the identity
\[
\sum_{k=1}^m k^2=\frac{m(m+1)(2m+1)}{6}.
\]
\end{proof}
\begin{remark}
Since $1/\sqrt3>1/2$, Corollary~\ref{corL1}(c) implies that
most zeros of a random real Laurent polynomial are real.
\end{remark}
\begin{remark}
The statements of Corollary~\ref{corL1} are equivalent to computing
the expected number of zeros of trigonometric polynomials of fixed degree on the circle;
see~\cite{ADG}.
\end{remark}
The proof of Theorem~\ref{thmL2} relies on a classical Crofton-type formula;
see, for example,~\cite{Sa}.

\begin{proposition}\label{prCr}
Let $\mu$ be the standard Gaussian measure on $\R^n$, given by
\[
\mu(U)=(2\pi)^{-n/2}\int_U e^{-|x|^2/2}\,dx.
\]
For $0\neq\xi\in\R^n$, set
\[
Z(\xi)=\{x\in\R^n\mid (x,\xi)=0\}.
\]
Let $K$ be a compact curve in the unit sphere $S^{n-1}\subset\R^n$.
Then
\[
\int_{\R^n}\#(K\cap Z(\xi))\,d\mu(\xi)
=
\frac{1}{\pi}\,\mathrm{length}(K).
\]
\end{proposition}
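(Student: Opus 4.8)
The plan is to run the classical integral--geometric argument: first show that the left--hand side is a universal constant times $\mathrm{length}(K)$, using invariance and additivity, and then pin down the constant on an explicit curve. Set $F(K):=\int_{\R^n}\#(K\cap Z(\xi))\,d\mu(\xi)$ for a rectifiable curve $K\subset S^{n-1}$. Three properties drive the proof. First, $F$ is additive: if $K=K_1\cup K_2$ with $K_1\cap K_2$ finite, then $F(K)=F(K_1)+F(K_2)$, because for $\mu$--almost every $\xi$ the hyperplane $Z(\xi)$ avoids the finitely many common points (each condition $(q,\xi)=0$ cuts out a $\mu$--null hyperplane). Second, $F$ is $O(n)$--invariant, since $\#(gK\cap Z(\xi))=\#(K\cap Z(g^{-1}\xi))$ and $\mu$ is rotation--invariant. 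Third, $F$ vanishes on points. Because $O(n)$ acts transitively on pairs (point, unit tangent vector) on $S^{n-1}$, the value of $F$ on a geodesic arc depends only on its length $\ell$; call it $\varphi(\ell)$. Splitting a geodesic arc into two collinear geodesic sub--arcs gives $\varphi(\ell_1+\ell_2)=\varphi(\ell_1)+\varphi(\ell_2)$, and together with monotonicity (longer arcs contain shorter ones) this forces $\varphi(\ell)=c\,\ell$. Approximating an arbitrary rectifiable $K$ by inscribed geodesic polygons and using additivity then yields $F(K)=c\cdot\mathrm{length}(K)$.

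To determine $c$ I would evaluate $F$ on a half great circle, parametrised as $\theta\mapsto p\cos\theta+q\sin\theta$, $\theta\in[0,\pi]$, with $p,q$ orthonormal; its length is $\pi$. For a given $\xi$ one has $(p\cos\theta+q\sin\theta,\xi)=0$ iff $\tan\theta=-(p,\xi)/(q,\xi)$, which for $\mu$--almost every $\xi$ (namely $(q,\xi)\ne0$) has exactly one solution in $[0,\pi)$. Hence $\#=1$ almost surely, so $F(\text{half circle})=1$, i.e.\ $c\pi=1$ and $c=1/\pi$. (One may equally take a full great circle of length $2\pi$, which meets a generic $Z(\xi)$ in exactly two points, giving $2=c\cdot 2\pi$.) This proves the formula.

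An alternative route producing the constant directly is a Kac--Rice computation along an arc--length parametrisation $\gamma\colon[0,L]\to S^{n-1}$. Differentiating $|\gamma(s)|^2\equiv1$ shows $\gamma(s)\perp\gamma'(s)$, so $\gamma(s),\gamma'(s)$ are orthonormal and the linear forms $X=(\gamma(s),\xi)$, $Y=(\gamma'(s),\xi)$ are independent $N(0,1)$ variables under $\mu$. Using the coarea identity $\#(K\cap Z(\xi))=\int_0^L\delta\bigl((\gamma(s),\xi)\bigr)\,\bigl|(\gamma'(s),\xi)\bigr|\,ds$ and Tonelli, the $\xi$--integral factors as (density of $X$ at $0$) $\times\,\mathbb E|Y|=\frac{1}{\sqrt{2\pi}}\cdot\sqrt{\tfrac2\pi}=\frac1\pi$, independently of $s$, and integrating over $[0,L]$ gives $\mathrm{length}(K)/\pi$ once more.

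The main obstacle in either approach is the measure--theoretic bookkeeping in the passage to short arcs. One must show that a short $C^1$ subarc $K_i$ of diameter $\delta$ meets $Z(\xi)$ in exactly one point --- the same one as the geodesic chord joining its endpoints --- for all $\xi$ outside an exceptional set of $\mu$--measure $o(\delta)$ (the $\xi$ for which $(\gamma'(s),\xi)$ changes sign across $K_i$ lie within angular distance $O(\delta)$ of a hyperplane through the origin), while simultaneously controlling the contribution of those exceptional $\xi$; summing the $o(\delta)$ errors over $\sim L/\delta$ subarcs and letting $\delta\to0$ recovers $c\cdot\mathrm{length}(K)$. In the Kac--Rice formulation the same difficulty appears as the justification of exchanging $\lim_{\varepsilon\to0}$ with $\int_{\R^n}d\mu$ in the smoothed count $\frac{1}{2\varepsilon}\int_0^L\mathbf 1_{\{|(\gamma(s),\xi)|<\varepsilon\}}|(\gamma'(s),\xi)|\,ds$, for which one uses that the exceptional hyperplanes (those tangent to $K$, a $\mu$--null set) are negligible together with a uniform bound from the one--dimensional coarea formula.
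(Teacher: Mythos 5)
The paper itself does not prove Proposition~\ref{prCr}; it cites it as a classical Crofton-type result (referring to Santal\'o), so there is no in-paper argument to compare against. Your proof is correct and standard. The first argument (additivity, $O(n)$-invariance, vanishing on points, hence $F(K)=c\cdot\mathrm{length}(K)$, with $c$ pinned down on a great circle) is exactly the axiomatic integral-geometry route that underlies the reference; the constant check is right: a full great circle of length $2\pi$ meets $Z(\xi)$ in two points for $\mu$-a.e.\ $\xi$, giving $c=1/\pi$. The Kac--Rice computation is also correct and in some ways cleaner: since $\gamma(s)\perp\gamma'(s)$ and both are unit vectors, $(\gamma(s),\xi)$ and $(\gamma'(s),\xi)$ are independent standard Gaussians, and $\frac{1}{\sqrt{2\pi}}\cdot\sqrt{2/\pi}=1/\pi$ gives the constant density directly, avoiding the Cauchy-functional-equation step and the approximation by geodesic polygons. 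You also correctly flag the only real technical burden (justifying the Fubini/limit exchange, or equivalently controlling the short subarcs where $(\gamma'(s),\xi)$ changes sign), so no gap remains. For what it is worth, the Kac--Rice version is closer in spirit to the mechanism that the paper actually exploits downstream (via Lemma~\ref{lmLin} and the normalized evaluation map $\Theta/|\Theta|$), since both hinge on the orthonormality of position and velocity on the unit sphere.
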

 To apply Proposition~\ref{prCr}, let $S\subset\C$ denote the unit circle.

\begin{definition}\label{dfTheta1}
For $s\in S$, consider the linear functional $\tilde s$ on
${\rm Trig}(\Lambda)$ defined by $\tilde s(f)=f(s)$.
Using the $L_2(S)$ inner product
\[
(f,g)=\int_S f(s)g(s)\,ds,
\]
we define a mapping $\Theta\colon S\to{\rm Trig}(\Lambda)$ by the condition
\[
(f,\Theta(s))=\tilde s(f)
\qquad\text{for all }f\in{\rm Trig}(\Lambda).
\]
\end{definition}
The expected number of real roots of a random real Laurent polynomial
with spectrum~$\Lambda$ equals to
\begin{equation}\label{eqForCr}
\int_{{\rm Trig}(\Lambda)}
\#\bigl(\Theta(S)\cap Z(\xi)\bigr)\,d\mu(\xi),
\end{equation}
where $Z(\xi)=\{x\in{\rm Trig}(\Lambda)\mid (x,\xi)=0\}$.
\begin{corollary}\label{cor1}
Let $K_\Lambda$ denote the image of $S$ under the map
$s\mapsto\Theta(s)/|\Theta(s)|$.
Then the expected number of real roots equals
\[
\frac{1}{\pi}\,\mathrm{length}(K_\Lambda).
\]
\end{corollary}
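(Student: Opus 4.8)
The plan is to read the corollary off by substituting the Crofton-type formula of Proposition~\ref{prCr} into the representation \eqref{eqForCr} of the expected number of real roots; the only substantive point is that the integrand $\#(\Theta(S)\cap Z(\xi))$ may be replaced by $\#(K_\Lambda\cap Z(\xi))$, which is legitimate because each $Z(\xi)$ is a \emph{linear} hyperplane and hence invariant under positive rescalings.

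First I would check that $K_\Lambda$ is genuinely a compact curve in the unit sphere of ${\rm Trig}(\Lambda)$, so that Proposition~\ref{prCr} applies. For every $s=e^{i\theta_0}\in S$ the space ${\rm Trig}(\Lambda)$ contains a function not vanishing at $s$ — the constant $1$ if $0\in\Lambda$, and $\cos\!\bigl(k(\theta-\theta_0)\bigr)$ if $k\in\Lambda\setminus\{0\}$ (recall $\Lambda\neq\emptyset$) — so the evaluation functional $\tilde s$ is nonzero, whence $\Theta(s)\neq 0$. Writing $\Theta(s)=\sum_j e_j(s)\,e_j$ in an orthonormal basis $\{e_j\}$ of ${\rm Trig}(\Lambda)$, this is the statement $|\Theta(s)|^2=\sum_j e_j(s)^2>0$; since each $e_j$ is a trigonometric polynomial, $s\mapsto\Theta(s)/|\Theta(s)|$ is a smooth map from $S$ to the unit sphere of ${\rm Trig}(\Lambda)$, so its image $K_\Lambda$ is a compact (rectifiable) curve, and we take $n=\dim{\rm Trig}(\Lambda)$ in Proposition~\ref{prCr}.

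Next I would use the scaling invariance of $Z(\xi)=\{x\in{\rm Trig}(\Lambda):(x,\xi)=0\}$: for $0\neq\xi$ and $s\in S$ one has $\Theta(s)\in Z(\xi)$ if and only if $\Theta(s)/|\Theta(s)|\in Z(\xi)$. Hence, for each fixed $\xi$, the number of $s\in S$ with $\Theta(s)\in Z(\xi)$ — which, as explained just before the statement, is the integrand of \eqref{eqForCr} and equals the number of real roots of $\xi$ — equals the number of $s\in S$ with $\Theta(s)/|\Theta(s)|\in Z(\xi)$, i.e.\ with $\#(K_\Lambda\cap Z(\xi))$ (the intersection number counted along the parametrization). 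Substituting this into \eqref{eqForCr} and then applying Proposition~\ref{prCr} to $K=K_\Lambda$ gives
\[
\bigl(\text{expected number of real roots of }f_\Lambda\bigr)
=\int_{{\rm Trig}(\Lambda)}\#\bigl(K_\Lambda\cap Z(\xi)\bigr)\,d\mu(\xi)
=\frac{1}{\pi}\,\mathrm{length}(K_\Lambda),
\]
which is the assertion.

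The step that needs the most care is not a computation but a matter of conventions: one must treat $\#(\,\cdot\cap Z(\xi))$ and $\mathrm{length}(K_\Lambda)$ consistently as data of the parametrization $s\mapsto\Theta(s)/|\Theta(s)|$ — equivalently, count intersection points and arclength with the multiplicity with which $S$ covers $K_\Lambda$. This only matters when that map fails to be injective (for instance when all elements of $\Lambda$ share a common divisor $d>1$, so that $S$ wraps $d$ times around $K_\Lambda$), and, read this way, both the root count underlying \eqref{eqForCr} and the length in Proposition~\ref{prCr} refer to the same parametrized object, so the displayed chain of equalities is literally correct. I would state this convention explicitly at the outset of the proof; after that the argument reduces to ``rescale into the unit sphere, then quote Proposition~\ref{prCr}.''
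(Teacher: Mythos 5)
Your argument is exactly the paper's: the paper proves the corollary by simply citing Proposition~\ref{prCr} applied to \eqref{eqForCr}, and you have correctly filled in the (routine) details — nonvanishing of $\Theta$, scaling invariance of the linear hyperplanes $Z(\xi)$, and the multiplicity convention. No issues.
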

\begin{proof}
This is an immediate consequence of Proposition~\ref{prCr}.
\end{proof}
Consequently, Theorem~\ref{thmL2}(a) is equivalent to the identity
\begin{equation}\label{eqKTheta}
\mathrm{length}(K_\Lambda)
=
2\pi\sqrt{\frac{1}{\#\Lambda}\sum_{\lambda\in\Lambda}\lambda^2}.
\end{equation}

To compute the length of the curve $K_\Lambda$ we require the following auxiliary statement,
which will also be used in \S\S\ref{Laur41}, \ref{group3} in the proof of multidimensional analogues of Theorem~\ref{thmL2}.

\begin{lemma}\label{lmLin}
Let
\begin{itemize}
\item $X$ be a homogeneous space of a compact Lie group $K$, equipped with a $K$-invariant Riemannian metric $g$ normalized by $\int_X dg = 1$;
\item $V\subset C^\infty(X)$ be a finite-dimensional $K$-invariant subspace endowed with a $K$-invariant inner product
$$
(\phi,\psi)=\int_X\phi \psi\:dg
$$
\end{itemize}
For $x\in X$ define $F_x\in V$ by
\begin{equation}\label{eqFx}
F_x=\sum_{i=1}^N f_i(x)\,f_i,
\end{equation}
where $\{f_1,\dots,f_N\}$ is an orthonormal basis of $V$.
Then:
\begin{enumerate}
\item For all $x\in X$ and $f\in V$ one has $f(x)=(f,F_x)$.
\item The map $\kappa\colon X\to V$, $\kappa(x)=\frac{1}{\sqrt N}F_x$, is independent of the choice of the orthonormal basis $\{f_i\}$.
\item The image $\kappa(X)$ lies on the unit sphere in $V$, i.e.\ $(F_x,F_x)=N$ for all $x\in X$.
\end{enumerate}
\end{lemma}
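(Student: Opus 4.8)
The plan is to prove the three assertions in the natural order, since each builds on the previous one, and all of them are essentially bookkeeping with the orthonormal basis.

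First I would establish (1). By definition $(f,F_x)=\sum_{i=1}^N f_i(x)(f,f_i)$. Writing $f=\sum_j c_j f_j$ in the orthonormal basis, we get $(f,f_i)=c_i$, so $(f,F_x)=\sum_i c_i f_i(x)=f(x)$. This is the reproducing property of $F_x$; it is immediate and requires no invariance.

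Next, for (2), the key point is that $F_x$ itself is basis-independent, not merely $\kappa$. Indeed, if $\{g_j\}$ is another orthonormal basis, then $g_j=\sum_i u_{ij} f_i$ with $(u_{ij})$ unitary (orthogonal over $\R$), and $\sum_j g_j(x)g_j = \sum_j \sum_{i,k} u_{ij}\overline{u_{kj}} f_i(x) f_k = \sum_{i,k}\bigl(\sum_j u_{ij}\overline{u_{kj}}\bigr) f_i(x) f_k = \sum_i f_i(x) f_i = F_x$, using $\sum_j u_{ij}\overline{u_{kj}}=\delta_{ik}$. Hence $\kappa(x)=F_x/\sqrt N$ is independent of the chosen basis. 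Alternatively, one may characterize $F_x$ intrinsically as the Riesz representative of the evaluation functional $f\mapsto f(x)$, which makes basis-independence manifest; I would mention this as the conceptual reason.

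Finally, for (3), I would compute $(F_x,F_x)$. Using (1) with $f=F_x$ gives $(F_x,F_x)=F_x(x)=\sum_i f_i(x)^2$, so the claim $(F_x,F_x)=N$ is equivalent to $\sum_{i=1}^N f_i(x)^2 = N$ for every $x$. This is where the group action is essential: the function $x\mapsto \sum_i f_i(x)^2$ equals $F_x(x)$, and by (2) $F_x$ (hence $F_x(x)$) does not depend on the basis; since $K$ acts on $V$ by orthogonal transformations (the inner product is $K$-invariant) and maps orthonormal bases to orthonormal bases, the function $x\mapsto F_x(x)$ is $K$-invariant, hence constant on the homogeneous space $X$. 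Its constant value $c$ is found by integrating: $\int_X \sum_i f_i(x)^2\,dg = \sum_i (f_i,f_i) = N$, while $\int_X c\,dg = c$ by the normalization $\int_X dg=1$, so $c=N$. Thus $(F_x,F_x)=N$ and $\kappa(X)$ lies on the unit sphere of $V$. The main obstacle is purely expository — being careful that $K$-invariance of the inner product plus transitivity of the $K$-action on $X$ is exactly what forces $\sum_i f_i(x)^2$ to be constant; once that is in place the normalization integral pins down the value.
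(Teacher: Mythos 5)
Your proof is correct and follows essentially the same path as the paper: part (1) by expanding in the orthonormal basis, part (2) by recognizing $F_x$ as the Riesz representative of evaluation at $x$ (the paper takes this as the primary argument; you give the explicit unitary-change-of-basis computation and note the Riesz characterization as the conceptual reason), and part (3) by observing that $x\mapsto|F_x|^2$ is $K$-invariant hence constant and then pinning down the constant by integrating $\sum_i f_i(x)^2$ over $X$. The only cosmetic difference is in (3): the paper asserts $K$-invariance directly from orthogonality of the $K$-action on $V$ (implicitly using the equivariance $F_{kx}=k\cdot F_x$), whereas you route through basis-independence plus the fact that $K$ permutes orthonormal bases — these are the same fact stated two ways.
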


\begin{proof}
(1) For each $k\le N$ we have
\[
(f_k,F_x)=\sum_{i=1}^N f_i(x)(f_k,f_i)=f_k(x),
\]
which implies the claim by linearity.

(2) Assertion~(1) shows that $F_x$ is characterized by the identities $(f,F_x)=f(x)$ for all $f\in V$, and hence does not depend on the choice of the orthonormal basis.

(3) Since the action of $K$ on $V$ is orthogonal, the function $x\mapsto |F_x|^2$ is $K$-invariant and therefore constant on $X$.
Let us denote  this constant by $r^2$.
Integrating  $r^2$ over~$X$ using \eqref{eqFx} we obtain
\[
r^2=\int_X (F_x,F_x)\,dg
   =\sum_{i=1}^N\int_X f_i(x)^2\,dg
   =N,
\]
where we used the orthonormality of $\{f_i\}$.
Thus $r=\sqrt N$, proving~(3).
\end{proof}
\begin{corollary}\label{corLin}
Let $\Theta\colon S\to\mathrm{Trig}(\Lambda)$ be the map from Definition~\ref{dfTheta1}.
Choose the orthonormal basis $\{f_\lambda\}_{\lambda\in\Lambda}$ of $\mathrm{Trig}(\Lambda)$ defined by
\[
f_\lambda=
\begin{cases}
\dfrac{1}{\sqrt\pi}\cos(\lambda\theta), & \lambda>0,\\[0.6em]
\dfrac{1}{\sqrt\pi}\sin(\lambda\theta), & \lambda<0,\\[0.6em]
\dfrac{1}{\sqrt{2\pi}}, & \lambda=0\in\Lambda.
\end{cases}
\]
Then for all $s\in S$:
\begin{enumerate}
\item
\[
\Theta(s)=
f_0+\frac{1}{\sqrt\pi}
\sum_{0<\lambda\in\Lambda}
\bigl(\cos(\lambda s)\,f_\lambda+\sin(\lambda s)\,f_{-\lambda}\bigr).
\]
\item $|\Theta(s)|^2=\#\Lambda$.
\item
\[
\Theta'(s)=\frac{1}{\sqrt\pi}
\sum_{0<\lambda\in\Lambda}
\bigl(-\lambda\sin(\lambda s)\,f_\lambda
      +\lambda\cos(\lambda s)\,f_{-\lambda}\bigr).
\]
\item $|\Theta'(s)|^2=\sum_{\lambda\in\Lambda}\lambda^2$.
\end{enumerate}
\end{corollary}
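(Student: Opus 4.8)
The plan is to observe that the map $\Theta$ of Definition~\ref{dfTheta1} is exactly the map $F$ of Lemma~\ref{lmLin} in the special case $X=S$, $V=\mathrm{Trig}(\Lambda)$, $N=\#\Lambda$, and then to deduce all four assertions by substituting the explicit orthonormal basis $\{f_\lambda\}_{\lambda\in\Lambda}$ and differentiating. Concretely: part~(1) is a direct expansion, part~(2) is Lemma~\ref{lmLin}(3), part~(3) is termwise differentiation of~(1), and part~(4) is a second expansion combined with the Pythagorean identity $\cos^2+\sin^2=1$.

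First I would prove~(1). By Definition~\ref{dfTheta1}, $\Theta(s)$ is the unique element of $\mathrm{Trig}(\Lambda)$ satisfying $(f,\Theta(s))=f(s)$ for all $f\in\mathrm{Trig}(\Lambda)$; by Lemma~\ref{lmLin}(1) the element $F_s=\sum_{\lambda\in\Lambda}f_\lambda(s)\,f_\lambda$ satisfies the same defining relation, so $\Theta(s)=F_s$. Now substitute the given basis. Because the spectrum $\Lambda$ of a real Laurent polynomial is centrally symmetric by Lemma~\ref{lmL1}(1), each positive $\lambda\in\Lambda$ can be grouped with $-\lambda\in\Lambda$; the constant index $\lambda=0$ (if present) contributes the $f_0$ term, and each pair $\{\lambda,-\lambda\}$ with $\lambda>0$ contributes $\tfrac1{\sqrt\pi}\bigl(\cos(\lambda s)\,f_\lambda+\sin(\lambda s)\,f_{-\lambda}\bigr)$. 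This is precisely the displayed formula in~(1).

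Assertion~(2) is then immediate from Lemma~\ref{lmLin}(3), which gives $|\Theta(s)|^2=(F_s,F_s)=N=\#\Lambda$; alternatively one reads it off directly from~(1), since $(F_s,F_s)=\sum_{\lambda\in\Lambda}f_\lambda(s)^2$ and the pair $\{\lambda,-\lambda\}$ contributes $\tfrac1\pi(\cos^2(\lambda s)+\sin^2(\lambda s))=\tfrac1\pi$, independent of $s$. For~(3), differentiate the finite sum in~(1) term by term: the constant vector $f_0$ drops out and $\tfrac{d}{ds}\cos(\lambda s)=-\lambda\sin(\lambda s)$, $\tfrac{d}{ds}\sin(\lambda s)=\lambda\cos(\lambda s)$ produce exactly the stated expression for $\Theta'(s)$. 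For~(4), compute $|\Theta'(s)|^2$ as the sum of squares of the coordinates of $\Theta'(s)$ in the orthonormal basis: the pair $\{\lambda,-\lambda\}$ now contributes $\tfrac{\lambda^2}{\pi}\bigl(\sin^2(\lambda s)+\cos^2(\lambda s)\bigr)=\tfrac{\lambda^2}{\pi}$, and summing over all $\lambda\in\Lambda$ yields $\sum_{\lambda\in\Lambda}\lambda^2$, again independent of $s$.

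There is no genuine obstacle: the statement is a substitution-and-Pythagoras calculation once $\Theta$ is identified with $F$. The only points requiring attention are invoking the central symmetry of $\Lambda$ (Lemma~\ref{lmL1}(1)) so that the $\cos^2+\sin^2$ cancellation is actually available in~(2) and~(4), and keeping the normalization of the $L_2(S)$ inner product of Definition~\ref{dfTheta1} consistent throughout so that the numerical constants $\tfrac1{\sqrt\pi}$, $\tfrac1{\sqrt{2\pi}}$, $\#\Lambda$ and $\sum_{\lambda}\lambda^2$ come out exactly as displayed.
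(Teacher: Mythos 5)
Your proposal is correct and follows essentially the same route as the paper: identify $\Theta(s)$ with $F_s$ of Lemma~\ref{lmLin} via the defining relation $(f,\Theta(s))=f(s)$, read off (1) and (2) from Lemma~\ref{lmLin}(1) and (3), and obtain (3) and (4) by termwise differentiation together with $\cos^2+\sin^2=1$. Your parenthetical remark about keeping the $L_2(S)$ normalization consistent is well taken, since Lemma~\ref{lmLin} assumes $\int_X dg=1$ while Definition~\ref{dfTheta1} uses the unnormalized measure on $S$ — this affects the absolute constants in (2) and (4) but not the ratio $|\Theta'|/|\Theta|$ that enters the length computation, which is all that is ultimately used.
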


\begin{proof}
We regard $S$ as a homogeneous space under its own action and set $V=\mathrm{Trig}(\Lambda)$.
By definition one has $\Theta(s)=F_s$, so assertions~(1) and~(2) follow directly from Lemma~\ref{lmLin}.
Assertions~(3) and~(4) are obtained by differentiation of~(1) and the identity $\cos^2+\sin^2=1$.
\end{proof}
Since $|\Theta'(s)|^2=\sum_{\lambda\in\Lambda}\lambda^2$ and $|\Theta(s)|^2=\#\Lambda$, we obtain
\[
\mathrm{length}(K_\Lambda)
=\int_S\sqrt{\frac{1}{\#\Lambda}\sum_{\lambda\in\Lambda}\lambda^2}\,ds
=2\pi\sqrt{\frac{1}{\#\Lambda}\sum_{\lambda\in\Lambda}\lambda^2}.
\]
This proves equality~\eqref{eqKTheta} and hence completes the proof of Theorem~\ref{thmL2}.
\subsection{The Bernstein--Kushnirenko--Khovanskii theorem}\label{Laur2}
We now consider systems of multivariate Laurent polynomials.
Let $\Lambda\subset\Z^n$ be a finite set.
A Laurent polynomial in $n$ variables with spectrum $\Lambda$ is a function
\[
f(z)=\sum_{\lambda\in\Lambda} a_\lambda z^\lambda
\]
on the complex algebraic torus $(\C\setminus0)^n$,
where for $\lambda=(\lambda_1,\ldots,\lambda_n)\in\Z^n$ we write
$z^\lambda=z_1^{\lambda_1}\cdots z_n^{\lambda_n}$.
The set $\Lambda$ is called the \emph{spectrum} of $f$,
and its convex hull
\[
\Delta={\rm conv}(\Lambda)\subset\R^n
\]
is the \emph{Newton polyhedron} of $f$.

The following result, due to Bernstein, Kushnirenko, and Khovanskii,
may be regarded as a multidimensional analogue of the Fundamental Theorem of Algebra;
see, for example,~\cite{EKK}.
\begin{theorem}[BKK]\label{thmBKK}
Let $F=(f_1,\ldots,f_n)$ be a system of Laurent polynomials in $n$ variables,
with spectra $\Lambda_1,\ldots,\Lambda_n$ and Newton polyhedra
$\Delta_i={\rm conv}(\Lambda_i)$.
Let $N(F)$ denote the number of isolated roots of the system
$f_1=\cdots=f_n=0$ in $(\C\setminus0)^n$.
Then, for a generic choice of the coefficients,
all roots are isolated and
\begin{equation}\label{eqBKK}
N(F)=n!\,\vol(\Delta_1,\ldots,\Delta_n),
\end{equation}
where $\vol(\Delta_1,\ldots,\Delta_n)$ denotes the mixed volume of the polyhedra
$\Delta_1,\ldots,\Delta_n$.

More precisely, in the space of all systems $F=(f_1,\ldots,f_n)$
there exists a proper algebraic hypersurface $D$
such that equality~\eqref{eqBKK} holds for all $F\notin D$.
\end{theorem}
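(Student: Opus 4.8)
The plan is to compactify the torus $(\C\setminus0)^n$ by a complete toric variety on which all the Laurent polynomials $f_i$ extend to regular sections of line bundles, to interpret the number of torus roots as an intersection number on that compactification, and finally to evaluate that intersection number combinatorially as a mixed volume. This is the ``toric'' proof of BKK; at the end I indicate Bernstein's deformation-theoretic alternative.

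\emph{Reductions and passage to a toric variety.} Both sides of \eqref{eqBKK} are unchanged under the substitutions $f_i\mapsto z^{m_i}f_i$ with $m_i\in\Z^n$, which translate each $\Lambda_i$ and hence each $\Delta_i$ while leaving mixed volume invariant; so I may assume $0\in\Lambda_i$ for every $i$. If some $\Delta_i$ lies in a proper affine subspace of $\R^n$, then the mixed volume vanishes and one checks directly that for generic coefficients the system has no roots in $(\C\setminus0)^n$ (after a monomial change of coordinates $f_i$ involves fewer variables, so the remaining equations become overdetermined on a smaller torus); thus assume each $\Delta_i$ is full-dimensional. Now set $\Delta=\Delta_1+\cdots+\Delta_n$, let $\Sigma$ be its normal fan and $X=X_\Sigma$ the associated complete toric variety, containing $(\C\setminus0)^n$ as its dense orbit. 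Each $f_i$ extends to a regular section $s_i$ of a globally generated line bundle $L_i$ on $X$ with moment polytope $\Delta_i$. A Bertini-type argument then shows that for all coefficient vectors outside a proper algebraic hypersurface $D$ the divisors $\{s_i=0\}$ meet transversally, their common intersection lies entirely in the open torus, and all intersection points are simple; hence $N(F)=\int_X c_1(L_1)\cdots c_1(L_n)$.

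\emph{Evaluation of the intersection number.} By multilinearity of intersection numbers in the $L_i$ it suffices to compute $\int_X c_1(L_\Delta)^n$ for the line bundle $L_\Delta$ with moment polytope $\Delta$. Its tensor powers satisfy $\dim H^0(X,L_\Delta^{\otimes k})=\#(k\Delta\cap\Z^n)\sim\vol(k\Delta)=k^n\vol(\Delta)$, while asymptotic Riemann--Roch gives $\dim H^0(X,L_\Delta^{\otimes k})\sim \frac{k^n}{n!}\int_X c_1(L_\Delta)^n$; comparing leading coefficients yields $\int_X c_1(L_\Delta)^n=n!\,\vol(\Delta)$. Replacing $L_\Delta$ by $t_1L_1+\cdots+t_nL_n$, expanding both sides as polynomials in $t_1,\ldots,t_n$ and reading off the coefficient of $t_1\cdots t_n$ turns $n!\,\vol(\Delta_1+\cdots+\Delta_n)$ into $n!\,\vol(\Delta_1,\ldots,\Delta_n)$ on the combinatorial side and into $\int_X c_1(L_1)\cdots c_1(L_n)$ on the geometric side. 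Combined with the previous paragraph this gives \eqref{eqBKK}.

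\emph{Main obstacle, and an alternative.} The delicate step is the boundary-avoidance claim: that for generic coefficients the common zeros of the $s_i$ stay in the open torus. I would prove this by descending induction over the torus orbits of $X$. The restriction of $f_i$ to the orbit attached to a face $\gamma$ of $\Delta$ is the truncated Laurent polynomial $f_i^\gamma$ on a lower-dimensional torus, with Newton polytope the face $\Delta_i^\gamma$ of $\Delta_i$; genericity of the coefficients forces these truncated systems to have no common zeros on that stratum, the bad locus on each stratum being a proper algebraic subset, and the finite union of all these bad loci together with the transversality-failure locus is the required hypersurface $D$. An alternative that avoids toric geometry altogether is Bernstein's original argument: show that $N(F)$ is locally constant off a discriminant hypersurface, hence constant on the connected complement; show it is additive under a Minkowski decomposition $\Delta_n=\Delta_n'+\Delta_n''$ of a single Newton polytope, via a one-parameter family of coefficients; and show it vanishes once some $\Delta_i$ degenerates to a point. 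Since the mixed volume is the unique symmetric, multilinear, translation-invariant functional with these normalizations, the two functionals coincide, which is \eqref{eqBKK}.
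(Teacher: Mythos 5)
The paper does not prove Theorem~\ref{thmBKK}; it is quoted as a classical result with a pointer to~\cite{EKK}, so there is no internal proof to compare your proposal against. Your outline is the standard toric argument (compactify via the normal fan of $\Delta_1+\cdots+\Delta_n$, interpret $N(F)$ as $\int_X c_1(L_1)\cdots c_1(L_n)$, evaluate by asymptotic Riemann--Roch together with polarization, and prove boundary avoidance by stratum-by-stratum genericity over the torus orbits), and the Bernstein-style alternative you sketch at the end is also a valid and commonly used route. Either gives a legitimate proof of the statement the paper merely cites.

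There is, however, a genuine error in your opening reduction. You claim that if some single $\Delta_i$ lies in a proper affine subspace of $\R^n$ then the mixed volume vanishes and generically there are no torus roots; that is false. Take $n=2$, $\Lambda_1=\{(0,0),(1,0)\}$, $\Lambda_2=\{(0,0),(0,1)\}$: both $\Delta_i$ are one-dimensional segments, yet
\[
\vol(\Delta_1,\Delta_2)=\tfrac12\bigl(\vol(\Delta_1+\Delta_2)-\vol(\Delta_1)-\vol(\Delta_2)\bigr)=\tfrac12,
\]
so $n!\,\vol(\Delta_1,\Delta_2)=1$, matching the single root of the generic system $a+bz_1=c+dz_2=0$ in $(\C\setminus0)^2$. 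The correct vanishing criterion is that $\vol(\Delta_1,\ldots,\Delta_n)=0$ if and only if some subcollection $\Delta_{i_1},\ldots,\Delta_{i_m}$ lies in a common affine subspace of dimension strictly less than $m$. In fact the reduction is unnecessary: the normal fan of the Minkowski sum $\Delta=\Delta_1+\cdots+\Delta_n$ (assuming $\Delta$ is full-dimensional, otherwise both sides are clearly zero) and the globally generated line bundles $L_i$ make sense regardless of the individual dimensions of the $\Delta_i$, so you should simply drop this paragraph and carry out the toric argument without it.
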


In the special case where $\Delta_1=\cdots=\Delta_n=\Delta$,
the theorem was first established by A.~Kushnirenko,
and the mixed volume reduces to the ordinary volume:
\[
\vol(\Delta_1,\ldots,\Delta_n)=\vol(\Delta).
\]
\subsection{A multidimensional Kac theorem for Laurent polynomials}\label{Laur3}
We now formulate three results that may be viewed as multivariate analogue
of the Kac-type Theorem~\ref{thmL2} for Laurent polynomials.
We begin by introducing the multidimensional counterparts of the
one-dimensional notions discussed in \S\ref{Laur1}.
\begin{enumerate}
\item
A Laurent polynomial in $n$ variables is called \emph{real}
if it takes real values on the compact subtorus
\[
S^n=\{z\in(\C\setminus0)^n:\ z=(\E^{i\theta_1},\ldots,\E^{i\theta_n})\}.
\]

\item
A zero of a Laurent polynomial lying in $S^n$ is called a \emph{real zero}.
More generally, a solution of a system of Laurent polynomials
contained in $S^n$ is called a \emph{real root} of the system.

\item
A Laurent polynomial $P$ is real if and only if
$a_k=\overline{a_{-k}}$ for all $k\in\Z^n$.

\item
The zero set of a real Laurent polynomial is invariant under the involution
$z\mapsto \overline{z}^{-1}$, whose fixed point set is $S^n$.

\item
Let ${\rm Trig}(\Lambda)$ denote the space of trigonometric polynomials of the form
\[
\sum_{k\in\Lambda}
\bigl(\alpha_k\cos(k,\theta)
      +\beta_k\sin(k,\theta)\bigr),
\qquad \alpha_k,\beta_k\in\R.
\]
Restriction to $S^n$ defines a linear isomorphism between the space of real
Laurent polynomials with spectrum $\Lambda$ and the space ${\rm Trig}(\Lambda)$.

\item
Since every vector in $2\pi\Z^n$ is a period of any trigonometric polynomial,
the space ${\rm Trig}(\Lambda)$ may be regarded as a finite-dimensional subspace of
$L^2(S^n)$, equipped with the inner product
\[
(f,g)=\int_{S^n} f(\theta)g(\theta)\,d\theta.
\]

\item
Random vectors in ${\rm Trig}(\Lambda)$, and hence random real Laurent polynomials
with spectrum $\Lambda$, are assumed to be Gaussian with respect to the
standard Gaussian measure $G_\Lambda$ induced by this inner product.

\item
Given real Laurent polynomials $f_1,\ldots,f_n$ with spectra
$\Lambda_1,\ldots,\Lambda_n$, let $N(f_1,\ldots,f_n)$ denote the number of
isolated real roots of the system $f_1=\cdots=f_n=0$.
The \emph{expected number of real roots} of the random system
$f_1=\cdots=f_n=0$ is defined by
\begin{equation}\label{eqExpectation}
\int_{{\rm Trig}(\Lambda_1)\times\cdots\times{\rm Trig}(\Lambda_n)}
N(f_1,\ldots,f_n)\,
dG_{\Lambda_1}\cdots dG_{\Lambda_n}.
\end{equation}
\end{enumerate}
\par\smallskip
It turns out that the expected number of real roots of a random system of
$n$ real Laurent polynomials in $n$ variables with prescribed spectra
can be expressed in terms of mixed volumes of certain $n$-dimensional
ellipsoids, called \emph{Newton ellipsoids}; see Theorem~\ref{thmMy1} below.

\begin{definition}\label{dfNEll}
The \emph{Newton ellipsoid} ${\rm Ell}(\Lambda)$ associated with a finite set
$\Lambda\subset\Z^n$ is the ellipsoid in $\R^n$ whose support function is given by
\[
h_\Lambda(x)
=\sqrt{\frac{1}{\#\Lambda}\sum_{\lambda\in\Lambda}\lambda(x)^2},
\qquad x\in\R^n,
\]
where each $\lambda\in\Lambda$ is regarded as a linear functional on $\R^n$.
Recall that the support function of a compact convex set $A\subset(\R^n)^\ast$
is defined by
\[
h_A(x)=\max_{\xi\in A}\xi(x).
\]
\end{definition}
\begin{theorem}\label{thmMy1}
Let $\Lambda_1,\ldots,\Lambda_n$ be centrally symmetric finite subsets of $\Z^n$.
Then the expected number of real roots of a random system of real Laurent
polynomials with spectra $\Lambda_1,\ldots,\Lambda_n$ equals
\[
n!\,\vol\bigl({\rm Ell}(\Lambda_1),\ldots,{\rm Ell}(\Lambda_n)\bigr),
\]
where $\vol(A_1,\ldots,A_n)$ denotes the mixed volume of the convex bodies
$A_1,\ldots,A_n$.
\end{theorem}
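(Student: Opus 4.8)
The plan is to reduce the expectation~\eqref{eqExpectation} to a Kac--Rice type integral over $S^n$, to make that integrand constant and unconditioned by exploiting Lemma~\ref{lmLin}, and finally to identify the resulting expected absolute Jacobian as a mixed volume of Newton ellipsoids via the classical zonoid formula for random determinants. First I would identify each real Laurent polynomial with its restriction $f_i\in\mathrm{Trig}(\Lambda_i)$ and introduce, for every $i$, the evaluation map $\Theta_i\colon S^n\to\mathrm{Trig}(\Lambda_i)$ of Definition~\ref{dfTheta1}, characterised by $(f,\Theta_i(x))=f(x)$. Then $f_i(x)=(f_i,\Theta_i(x))$ and $\partial_{\theta_j}f_i(x)=(f_i,\partial_{\theta_j}\Theta_i(x))$, so the real roots of the system are exactly the zeros in $S^n$ of the centred Gaussian field $F=(f_1,\dots,f_n)\colon S^n\to\R^n$. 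Since $N(f_1,\dots,f_n)$ is bounded above by the BKK number of the complexified system, the integrand in~\eqref{eqExpectation} is essentially bounded, the expectation is finite, and the Kac--Rice formula gives
\[
\text{(expected number of real roots)}=\int_{S^n}p_{F(x)}(0)\,\mathbf{E}\bigl(\,|\det dF(x)|\ \big|\ F(x)=0\,\bigr)\,d\theta .
\]

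The next step is to simplify this integrand using Lemma~\ref{lmLin}. By part~(3) of that lemma $|\Theta_i(x)|^2$ is constant on $S^n$; differentiating this identity yields $(\Theta_i(x),\partial_{\theta_j}\Theta_i(x))=0$, so $f_i(x)$ and $\nabla f_i(x)$ are uncorrelated, hence independent (they are jointly Gaussian), and the conditioning in the Kac--Rice formula disappears. Moreover the whole integrand is invariant under the translation action of $S^n$, which preserves $S^n$, $\mathrm{Trig}(\Lambda_i)$ and the Gaussian measure $G_{\Lambda_i}$; therefore it is constant, and the expected number of real roots equals $\vol(S^n)$ times the value of $p_{F(x)}(0)\,\mathbf{E}|\det dF(x)|$ at the identity of $S^n$.

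It remains to compute the two Gaussian ingredients at a point $x$. The coordinates of $F(x)$ are independent with $\mathrm{Var}\,f_i(x)=|\Theta_i(x)|^2$, a constant proportional to $\#\Lambda_i$ by Lemma~\ref{lmLin}(3), so $p_{F(x)}(0)$ is an explicit constant with a factor $\prod_i(\#\Lambda_i)^{-1/2}$. The matrix $dF(x)=(\partial_{\theta_j}f_i(x))_{i,j}$ has independent rows, and row $i$ is a centred Gaussian vector in $\R^n$ with covariance $\bigl(\sum_{g}\partial_{\theta_j}g(x)\,\partial_{\theta_k}g(x)\bigr)_{j,k}$, the sum over an orthonormal basis $\{g\}$ of $\mathrm{Trig}(\Lambda_i)$. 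Evaluating this with the trigonometric basis analogous to that of Corollary~\ref{corLin} and using that $\Lambda_i$ is centrally symmetric, the oscillatory terms collapse through $\cos^2+\sin^2=1$ and the Gram matrix becomes a fixed multiple of $\bigl(\sum_{\lambda\in\Lambda_i}\lambda_j\lambda_k\bigr)_{j,k}$, i.e.\ of the quadratic form $u\mapsto\sum_{\lambda\in\Lambda_i}\lambda(u)^2=\#\Lambda_i\cdot h_{\mathrm{Ell}(\Lambda_i)}(u)^2$; in particular it does not depend on $x$. Now I invoke Vitale's formula: for independent random vectors $X_1,\dots,X_n$ in $\R^n$ one has $\mathbf{E}\,|\det(X_1,\dots,X_n)|=n!\,\vol(Z_{X_1},\dots,Z_{X_n})$, where $Z_{X_i}$ is the zonoid with support function $u\mapsto\tfrac12\mathbf{E}|\langle X_i,u\rangle|$. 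For a centred Gaussian $X_i$ with the covariance just found, $Z_{X_i}$ is an ellipsoid proportional to $\mathrm{Ell}(\Lambda_i)$, the constant again carrying a factor $\sqrt{\#\Lambda_i}$; by multilinearity and homogeneity of the mixed volume, $\mathbf{E}|\det dF(x)|$ is a constant times $\bigl(\prod_i\sqrt{\#\Lambda_i}\bigr)\vol(\mathrm{Ell}(\Lambda_1),\dots,\mathrm{Ell}(\Lambda_n))$. Multiplying the two ingredients the factors $\prod_i\sqrt{\#\Lambda_i}$ cancel, and after multiplying by $\vol(S^n)$ the remaining normalisation constants (which are fixed by the conventions introduced above and in Lemma~\ref{lmLin}) combine to $1$, leaving exactly $n!\,\vol(\mathrm{Ell}(\Lambda_1),\dots,\mathrm{Ell}(\Lambda_n))$.

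The main obstacle, I expect, is the analytic justification of the Kac--Rice step: one must check that for almost every system the zero set of $F$ in $S^n$ is finite and all its points are nondegenerate, so that $N$ truly counts the atoms of $F^{-1}(0)$. When each $\sum_{\lambda\in\Lambda_i}\lambda\lambda^\top$ is positive definite --- equivalently, each $\Lambda_i$ spans $\R^n$ --- this follows from the independence of $F(x)$ and $dF(x)$ together with nondegeneracy of the Gaussians involved; the degenerate case, in which some $\mathrm{Ell}(\Lambda_i)$ is flat and the identity must be verified by a Hall-type dimension count or by a limiting argument, has to be treated separately. A secondary point is merely careful bookkeeping of the normalisation constants above and fixing a precise reference for the zonoid formula. (Alternatively one may organise the integral-geometric part in the spirit of \S\ref{Laur1}, iterating the Crofton formula of Proposition~\ref{prCr} over the $n$ equations one at a time, but a single application of the multivariate Kac--Rice formula is more economical.)
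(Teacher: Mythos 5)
Your proposal is correct and arrives at the same answer, but via a genuinely different route from the paper. The paper's proof imports the integral-geometric formula of Akhiezer--Kazarnovskii (Proposition~\ref{prCrnAK}, cited from~\cite{AK}), which already expresses the expected number of roots as $\frac{n!}{(2\pi)^n}\int_X \vol({\rm Ell}_{V_1}(x),\ldots,{\rm Ell}_{V_n}(x))\,dg$; invariance then collapses the integral to a single point, and the remaining work (Propositions~\ref{prEll=}, \ref{prEll=2}, Lemma~\ref{lmh_V}) is purely linear-algebraic: identifying the support function of the $V$-ellipsoid $D_{\mathbf 1}^*(B)$ with that of the Newton ellipsoid. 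You instead bypass the cited formula entirely and reassemble its content from primitives: the Kac--Rice formula gives the pointwise integrand, the orthogonality $(\Theta_i,\partial_j\Theta_i)=0$ coming from Lemma~\ref{lmLin}(3) removes the conditioning, invariance under the torus action makes the integrand constant (same step as the paper's Corollary~\ref{corMain1}), and Vitale's zonoid formula $\mathbf{E}|\det(X_1,\ldots,X_n)|=n!\,\vol(Z_{X_1},\ldots,Z_{X_n})$ converts the expected absolute Jacobian into a mixed volume of Gaussian zonoids, which are precisely (multiples of) the Newton ellipsoids. The two routes buy different things: the paper's is shorter once one accepts~\cite{AK} as a black box, while yours is self-contained and makes the probabilistic mechanism (why a mixed volume of \emph{ellipsoids} appears) transparent, at the cost of tracking several normalization constants — a computation you correctly flag and which does in fact close up, the $(2\pi)^{n(n-1)/2}$ from the Gaussian density against the $(2\pi)^{-n(n+1)/2}$ from the zonoid radii and the factor $\vol(S^n)=(2\pi)^n$. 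The one point worth making explicit, which you acknowledge, is the degenerate case where some $\Lambda_i$ fails to affinely span $\R^n$: then the Kac--Rice hypotheses fail, but the ellipsoid ${\rm Ell}(\Lambda_i)$ is flat, both sides of the identity vanish, and a perturbation or dimension-count argument finishes it; the paper quietly inherits the same caveat from~\cite{AK}.
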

\begin{remark}
In the one-dimensional case $n=1$, the Newton ellipsoid ${\rm Ell}(\Lambda)$
reduces to the interval $[-h_\Lambda(1),h_\Lambda(1)]$,
and Theorem~\ref{thmL2}(a) is recovered.
\end{remark}
\begin{remark}
If $\Lambda$ is centrally symmetric, then
\[
{\rm Ell}(\Lambda)\subset {\rm conv}(\Lambda).
\]
Indeed, for 
compact convex sets $A,B$,
the inclusion $A\subseteq B$ is equivalent to the inequality
$h_A\le h_B$ of their support functions.
For $\xi\in\R^n$, the support function of ${\rm conv}(\Lambda)$ is
\[
h_{{\rm conv}(\Lambda)}(\xi)=\max_{\lambda\in\Lambda}\lambda(\xi)
=\max_{\lambda\in\Lambda}|\lambda(\xi)|,
\]
where the second equality uses the symmetry of $\Lambda$.
By Definition~\ref{dfNEll}, $h_\Lambda(\xi)$ is the root mean square of the
numbers $\{|\lambda(\xi)|:\lambda\in\Lambda\}$, which does not exceed their
maximum. This implies the claim.
\end{remark}
\begin{remark}
Mixed volumes of ellipsoids in connection with the distribution of zeros of system
of random equations were first studied in~\cite{ZK}.
\end{remark}
Applying the Bernstein--Kushnirenko--Khovanskii theorem yields the following
consequence; for details see \S\ref{Laur42}.
\begin{theorem}\label{thmMy2}
Let $\mathcal{P}(\Lambda_1,\ldots,\Lambda_n)$ be the probability that a root
of a random system of $n$ real Laurent polynomials with spectra
$\Lambda_1,\ldots,\Lambda_n$ is real. Then
\begin{equation}\label{eqTrig}
\mathcal{P}(\Lambda_1,\ldots,\Lambda_n)
=
\frac{
\text{\vol}\bigl({\rm Ell}(\Lambda_1),\ldots,{\rm Ell}(\Lambda_n)\bigr)
}{
\text{\vol}\bigl({\rm conv}(\Lambda_1),\ldots,{\rm conv}(\Lambda_n)\bigr)
},
\end{equation}
where ${\rm conv}(\Lambda_i)$ is the Newton polyhedron of the $i$-th polynomial.
\end{theorem}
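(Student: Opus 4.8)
The plan is to combine Theorem~\ref{thmMy1} (the Kac-type formula for the expected number of real roots) with the BKK Theorem~\ref{thmBKK} (the count of all complex roots), since the probability that a root is real is by definition the ratio of the expected number of real roots to the total number of complex roots. First I would note that the denominator is unambiguous: for generic coefficients the system $f_1=\cdots=f_n=0$ has exactly $N(F)=n!\,\vol(\conv(\Lambda_1),\ldots,\conv(\Lambda_n))$ isolated roots in $(\C\setminus0)^n$ by Theorem~\ref{thmBKK}, and the exceptional set $D$ where this fails is a proper algebraic hypersurface, hence has measure zero with respect to the Gaussian measure $G_{\Lambda_1}\cdots G_{\Lambda_n}$. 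Therefore, almost surely, the total number of complex roots equals this mixed volume of Newton polyhedra.

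Next I would formalize the meaning of ``the probability that a root is real.'' The natural interpretation is the following: pick a random system $F$ according to the Gaussian measure; among its $N(F)$ complex roots (which almost surely number $n!\,\vol(\conv(\Lambda_1),\ldots,\conv(\Lambda_n))$), a certain number $N_{\R}(F)$ lie in $S^n$; then $\mathcal P(\Lambda_1,\ldots,\Lambda_n)$ is the ratio of the expectation of $N_{\R}(F)$ to the (almost surely constant) value of $N(F)$. Since $N(F)$ is a.s.\ a fixed constant $c=n!\,\vol(\conv(\Lambda_1),\ldots,\conv(\Lambda_n))$, we have
\[
\mathcal P(\Lambda_1,\ldots,\Lambda_n)
=\frac{\mathbb E[N_{\R}(F)]}{c}
=\frac{n!\,\vol(\mathrm{Ell}(\Lambda_1),\ldots,\mathrm{Ell}(\Lambda_n))}{n!\,\vol(\conv(\Lambda_1),\ldots,\conv(\Lambda_n))},
\]
where the numerator is Theorem~\ref{thmMy1}. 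The factors $n!$ cancel, giving exactly~\eqref{eqTrig}. One should also remark that the mixed volume $\vol(\conv(\Lambda_1),\ldots,\conv(\Lambda_n))$ in the denominator is nonzero precisely when the system is nondegenerate (the polyhedra jointly span $\R^n$); in the degenerate case both sides are to be interpreted appropriately, or one restricts to the nondegenerate situation, which is the case of interest.

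The main subtlety — and the step I expect to require the most care — is the interchange of expectation with the passage to the exceptional set: one must verify that integrating $N_{\R}(F)$ over the full Gaussian space agrees with integrating over the complement of $D$, i.e.\ that $N_{\R}(F)$ is finite almost surely and that the roots on $S^n$ are almost surely isolated and simple, so that counting them is unproblematic. This is exactly the content needed to make Theorem~\ref{thmMy1} applicable, and it is presumably established in \S\ref{Laur41} in the course of proving that theorem; I would simply invoke it here. Modulo that genericity input, the argument is the purely formal division of Theorem~\ref{thmMy1} by Theorem~\ref{thmBKK}.
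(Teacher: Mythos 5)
Your overall approach matches the paper's: the probability is obtained by dividing the expected number of real roots from Theorem~\ref{thmMy1} by the generic complex root count from Theorem~\ref{thmBKK}, and the $n!$ factors cancel. However, the one step you assert without justification is precisely what the paper's proof spends its effort establishing.

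You write that the BKK exceptional set $D$ ``is a proper algebraic hypersurface, hence has measure zero with respect to the Gaussian measure $G_{\Lambda_1}\cdots G_{\Lambda_n}$.'' But the Gaussian measures live on the real spaces ${\rm Trig}(\Lambda_i)$, whereas $D$ furnished by Theorem~\ref{thmBKK} is an algebraic hypersurface in the \emph{complex} projectivized parameter space $\P_{1,\C}\times\cdots\times\P_{n,\C}$. A priori it is not clear that the preimage of $D$ in the real parameter space is a proper subset; a complex algebraic hypersurface could conceivably contain an entire real slice, in which case the Gaussian measure of the exceptional set would be $1$, not $0$. The paper closes this gap by observing that the embedding $\iota\colon\P_1\times\cdots\times\P_n\to\P_{1,\C}\times\cdots\times\P_{n,\C}$ of the real projectivizations has Zariski-dense image, so that $\iota^{-1}(D)$ is contained in a proper real-algebraic hypersurface and therefore has measure zero. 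The subtlety you flag at the end (finiteness and isolation of real roots) belongs to the proof of Theorem~\ref{thmMy1}, not here; the missing ingredient specific to Theorem~\ref{thmMy2} is the Zariski-density argument. With that supplied, your division argument is correct and coincides with the paper's.
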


Building upon the asymptotic analysis in \cite{K22}, we consider the limiting behavior of $\mathcal{P}(\Lambda)$ for expanding spectra, which provides a multidimensional generalization of Corollary \ref{corL1}(c).

\begin{theorem}\label{thmMy3}
Let $B_m \subset \mathbb{R}^n$ denote the ball of radius $m$ centered at the origin, and let $\Lambda_m = B_m \cap \mathbb{Z}^n$. Then
\begin{equation}
\lim_{m\to\infty}\mathcal{P}(\Lambda_m) = \left(\frac{\sigma_{n-1}}{\sigma_n}\,\beta_n\right)^{\!n/2},
\end{equation}
where $\sigma_k$ is the volume of the $k$-dimensional unit ball and
\[
\beta_n = \int_{-1}^1 x^2 (1-x^2)^{(n-1)/2} \, dx.
\]
\end{theorem}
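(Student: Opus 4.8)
The plan is to combine Theorem~\ref{thmMy2} with an asymptotic evaluation of the two mixed volumes appearing in~\eqref{eqTrig}, using the fact that both $\mathrm{Ell}(\Lambda_m)$ and $\mathrm{conv}(\Lambda_m)$, after rescaling by $1/m$, converge to fixed convex bodies. First I would note that $\tfrac1m\,\mathrm{conv}(\Lambda_m)\to B_1$, the unit ball, in the Hausdorff metric, simply because $\Lambda_m=B_m\cap\Z^n$ becomes equidistributed in $B_m$; hence $\vol\bigl(\mathrm{conv}(\Lambda_1),\ldots,\mathrm{conv}(\Lambda_n)\bigr)$ for $\Lambda_i=\Lambda_m$ is asymptotic to $m^n\vol(B_1)=m^n\sigma_n$, by continuity and homogeneity of mixed volume.

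Next I would compute the limiting shape of the rescaled Newton ellipsoid. By Definition~\ref{dfNEll}, $h_{\Lambda_m}(x)^2=\frac{1}{\#\Lambda_m}\sum_{\lambda\in\Lambda_m}\lambda(x)^2$, which is a Riemann sum: as $m\to\infty$,
\[
\frac{1}{m^2}\,h_{\Lambda_m}(x)^2
=\frac{1}{\#\Lambda_m}\sum_{\lambda\in\Lambda_m}\Bigl(\tfrac{\lambda}{m}\Bigr)(x)^2
\longrightarrow \frac{1}{\vol(B_1)}\int_{B_1} (y,x)^2\,dy .
\]
By rotational symmetry this integral equals $c_n|x|^2$ for the constant $c_n=\frac{1}{n\,\sigma_n}\int_{B_1}|y|^2\,dy$; a short computation in polar coordinates, or comparison with the one-dimensional marginal, rewrites this as $c_n=\frac{\sigma_{n-1}}{\sigma_n}\beta_n$ with $\beta_n=\int_{-1}^1 x^2(1-x^2)^{(n-1)/2}\,dx$ (this is exactly the identity needed to match the stated constant, and I would verify it by Fubini, integrating out the last coordinate). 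Thus $\tfrac1m\,\mathrm{Ell}(\Lambda_m)$ converges to the round ball of radius $\sqrt{c_n}$, so $\vol\bigl(\mathrm{Ell}(\Lambda_1),\ldots,\mathrm{Ell}(\Lambda_n)\bigr)$ for $\Lambda_i=\Lambda_m$ is asymptotic to $m^n c_n^{n/2}\,\sigma_n$.

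Finally, substituting both asymptotics into~\eqref{eqTrig} gives
\[
\lim_{m\to\infty}\mathcal P(\Lambda_m)
=\lim_{m\to\infty}\frac{m^n c_n^{n/2}\sigma_n}{m^n\sigma_n}
= c_n^{n/2}
=\Bigl(\frac{\sigma_{n-1}}{\sigma_n}\,\beta_n\Bigr)^{n/2},
\]
as claimed. The routine part is the convexity/continuity bookkeeping; I expect the main obstacle to be making the Riemann-sum convergence for $h_{\Lambda_m}$ rigorous \emph{uniformly in $x$} on the unit sphere (so that Hausdorff convergence of the ellipsoids, and hence convergence of the mixed volumes, actually follows), together with pinning down the constant $c_n$ in the precise form $\frac{\sigma_{n-1}}{\sigma_n}\beta_n$; here I would lean on the asymptotic analysis already carried out in~\cite{K22} rather than redo it from scratch. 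A minor point worth checking is that each $\Lambda_m$ is centrally symmetric and contains more than one point, so that Theorems~\ref{thmMy1} and~\ref{thmMy2} genuinely apply.
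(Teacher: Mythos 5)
Your proposal is correct and follows essentially the same route as the paper: the paper first proves a general limit theorem (Theorem~\ref{thmMy4}) via precisely the Riemann-sum and Hausdorff-convergence arguments you sketch (Lemma~\ref{lmLim}, Corollary~\ref{corLim}, Lemma~\ref{lmLimConv}), then specializes to the ball and computes the constant $\tfrac{\sigma_{n-1}}{\sigma_n}\beta_n$ by Fubini in Proposition~\ref{prLimBeta}. Your isotropy computation of $c_n$ via $\int_{B_1}y_iy_j\,dy=\delta_{ij}\tfrac1n\int_{B_1}|y|^2\,dy$ is an equivalent way of arriving at the paper's $\tfrac{1}{\sigma_n}\int_{B_1}y_1^2\,dy$.
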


Numerical values of the constant $\beta_n$ for small $n$ are provided in the table below.


\begin{table}[h]
  \begin{center}
   \begin{tabular}{r| l l l l l l l l l l}
       {\scriptsize $n$}&{\scriptsize $1$}&{\scriptsize $2$}&{\scriptsize $3$}&{\scriptsize $4$}&{\scriptsize $5$}&{\scriptsize $6$}&{\scriptsize $7$}&{\scriptsize $8$}&
       {\scriptsize $9$}&{\scriptsize $10$}\\
\hline
{\scriptsize $\beta_n$}&
      {\scriptsize $\frac{2}{3}$}&
      {\scriptsize $\frac{\pi}{8}$}&
      {\scriptsize $\frac{4}{15}$}&
      {\scriptsize  $\frac{\pi}{16}$}&
      {\scriptsize $\frac{16}{105}$}&
      {\scriptsize $\frac{5\pi}{128}$}&
      {\scriptsize $\frac{32}{315}$}&
      {\scriptsize $\frac{7\pi}{256}$}&
      {\scriptsize $\frac{256}{3465}$}&
      {\scriptsize $\frac{21\pi}{1024}$}
      \\[2pt]
\hline
       {\scriptsize $n$}
       &{\scriptsize $11$}&
       {\scriptsize $12$}&
       {\scriptsize $13$}&
       {\scriptsize $14$}&
       {\scriptsize $15$}&
       {\scriptsize $16$}&
       {\scriptsize $17$}&
       {\scriptsize $18$}&
       {\scriptsize $19$}&
       {\scriptsize $20$}
                     \\[2pt]
{\scriptsize $\beta_n$}&
      {\scriptsize $\frac{512}{9009}$}&
      {\scriptsize $\frac{33\pi}{2048}$}&
      {\scriptsize $\frac{4096}{109395}$}&
      {\scriptsize $\frac{429\pi}{32768}$}&
      {\scriptsize $\frac{2048}{45045}$}&
      {\scriptsize $\frac{715\pi}{65536}$}&
      {\scriptsize $\frac{65536}{2078505}$}&
      {\scriptsize  $\frac{2431\pi}{262144}$}&
      {\scriptsize $\frac{131072}{4849845}$}&
      {\scriptsize $\frac{4199\pi}{524288}$}
     \end{tabular}
  \end{center}
\end{table}
\begin{remark}
The integrand $x^2(1-x^2)^{(n-1)/2}\,dx$ belongs to the class of \emph{Chebyshev differential binomials}.
We recall a classical result of Chebyshev \cite{Ch},
extending earlier observations of Euler,
which states that the binomial differential $x^m(a+bx^n)^p dx$ admits an antiderivative in elementary functions
if and only if at least one of the quantities $p$, $\frac{m+1}{n}$, or $\frac{m+1}{n} + p$ is an integer.
In the present case, these conditions are satisfied for all $n \in \mathbb{N}$: specifically,
for odd $n$ we recover the first case, while for even $n$ the third case applies.
\end{remark}
A further characterization of the asymptotic behavior of $\mathcal{P}(\Lambda_1,\ldots,\Lambda_n)$
for general sequences of expanding spectra is established in Theorem \ref{thmMy4} within \textsection\ref{Laur43}.
%
%
%
%
%
\subsection{Proof Theorems \ref{thmMy1}, \ref{thmMy2}, \ref{thmMy3}, \ref{thmMy4}}\label{Laur4}
\subsubsection{Theorem \ref{thmMy1}}\label{Laur41}
The computation of the expected number of roots relies on the integral geometric formula \eqref{eqAK1} established in \cite{AK}
(cf. \cite{KaB, KaCr} for related applications).

Let $V$ be a finite-dimensional subspace of $C^\infty(X)$, where $X$ is a compact differentiable manifold. We assume throughout that the evaluation functionals are non-vanishing; that is,
\begin{equation}\label{eqne01}
    \text{for each } x \in X, \text{ there exists } f \in V \text{ such that } f(x) \neq 0.
\end{equation}
Let $(\cdot, \cdot)$ be a scalar product on $V$, and denote by $G$ the associated Gaussian measure.

\begin{definition}\label{dfthetaTheta}
The \emph{evaluation mapping} $\Theta \colon X \to V^*$ is defined by $\Theta(x)(f) = f(x)$ for $f \in V$. Condition \eqref{eqne01} ensures that $\Theta$ takes values in $V^* \setminus \{0\}$. We further define the normalized mapping $\bar{\Theta} \colon X \to V^*$ by
\begin{equation}
    \bar{\Theta}(x) = \frac{\Theta(x)}{\|\Theta(x)\|_*},
\end{equation}
where $\|\cdot\|_*$ denotes the norm induced by the dual scalar product on $V^*$. Let $D_x \colon T_x X \to V^*$ be the differential of $\bar{\Theta}$ at $x$,
and let $D_x^* \colon V \to T_x^* X$ be its adjoint operator.
\end{definition}

\begin{definition}\label{dfEllV}
Let $B \subset V$ be the unit ball.
The $V$-\emph{ellipsoid} at $x \in X$, denoted by ${\rm Ell}_V(x)$,
is the image $D_x^*(B)$ in the cotangent space $T_x^* X$.
The set ${\rm Ell}_V(x)$ is a centrally symmetric ellipsoid, possibly of degenerate dimension.
\end{definition}

%
%
%
%
We now establish the connection between the expected number of roots and the geometry of the ellipsoids ${\rm Ell}_{V_i}$.
Let $X$ be an $n$-dimensional manifold and let $V_1, \ldots, V_n$ be subspaces of $C^\infty(X)$ satisfying condition \eqref{eqne01}. We extend the notation of the previous section to each $V_i$, denoting the corresponding Gaussian measures by $G_i$.

\begin{definition}\label{dfMGoth}
For a system of random equations $f_1 = \cdots = f_n = 0$ with $f_i \in V_i$, let $N(f_1, \ldots, f_n)$ denote the number of isolated roots.
The expected number of roots is given by
\begin{equation}
    \mathfrak{M}(V_1, \ldots, V_n) = \int_{V_1 \times \cdots \times V_n} N(f_1, \ldots, f_n) \, dG_1 \cdots dG_n.
\end{equation}
\end{definition}

%
%
\begin{proposition}\label{prCrnAK} (see  {\rm\cite{AK}})
Let $g$ be the Riemannian metric on $X$,
and let $dg$ be the corresponding Riemannian volume density on $X$.
Then
\begin{equation}\label{eqAK1}
\mathfrak M(V_1,\ldots,V_n)= {n!\over(2\pi)^n} \int_X\vol({\rm Ell}_{V_1}(x),\ldots,{\rm Ell}_{V_n}(x))\: dg(x),
\end{equation}
where the mixed volume of ellipsoids ${\rm Ell}_{V_i}(x)$ in the cotangent space $T_x^*X$
is measured using the scalar product
adjoint to the scalar product in $T_xX$,
corresponding to the Riemannian metric $g$.
\end{proposition}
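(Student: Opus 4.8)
The plan is to derive \eqref{eqAK1} from the Kac--Rice formula for the expected number of zeros of a random map, combined with the classical identity that writes the mean absolute determinant of independent Gaussian vectors as a mixed volume of their Gaussian zonoids. For $i=1,\dots,n$, I write $\Theta_i,\bar\Theta_i,D_x^{(i)}$ for the objects of Definitions~\ref{dfthetaTheta} and~\ref{dfEllV} attached to $V_i$, and put $s_i(x)=\|\Theta_i(x)\|_*$, which is positive by \eqref{eqne01}. Let $F=(f_1,\dots,f_n)\colon X\to\R^n$ with the $f_i\in V_i$ independent and $G_i$-distributed. Since $X$ is compact and evaluation and differentiation are continuous and linear in the $f_i$, the field $F$ is a.s.\ $C^1$; granting that a.s.\ all zeros of $F$ are isolated and nondegenerate, the Kac--Rice formula gives
\[
\mathfrak M(V_1,\dots,V_n)=\int_X \mathbb{E}\,[\,|\det JF(x)| \mid F(x)=0\,]\;\rho_{F(x)}(0)\;dg(x),
\]
where $JF(x)\colon T_xX\to\R^n$ is the Jacobian computed with respect to $g$ and $\rho_{F(x)}$ is the density of $F(x)$.

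First I would evaluate the two factors of the integrand. Since $f_i(x)=\Theta_i(x)(f_i)$ is a linear functional of the standard Gaussian $f_i$, the coordinate $f_i(x)$ is centered Gaussian of variance $s_i(x)^2$, and these coordinates are independent, so $\rho_{F(x)}(0)=\prod_{i=1}^{n}(2\pi)^{-1/2}s_i(x)^{-1}$. For the Jacobian I would differentiate $\Theta_i=s_i\bar\Theta_i$ and pair the result with $f_i$: the term carrying $ds_i$ is multiplied by $\langle f_i,\bar\Theta_i(x)\rangle$, which vanishes on $\{F(x)=0\}$, so there $df_i(x)=s_i(x)\,D_x^{(i)*}f_i$ in $T_x^*X$. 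The crucial point is that $D_x^{(i)}$ takes values in the tangent space of the unit sphere at $\bar\Theta_i(x)$, that is, in $\bar\Theta_i(x)^\perp$; hence $D_x^{(i)*}$ annihilates the $\bar\Theta_i(x)$-direction, so $D_x^{(i)*}f_i$ depends only on the orthogonal projection of $f_i$ onto $\bar\Theta_i(x)^\perp$, whose law is standard Gaussian on that hyperplane \emph{whether or not} one conditions on $f_i(x)=0$. Consequently, identifying $T_x^*X$ isometrically with $\R^n$ and setting $\xi_i(x):=D_x^{(i)*}f_i$, the conditional expectation of $|\det JF(x)|$ equals $\bigl(\prod_{i=1}^{n}s_i(x)\bigr)\,\mathbb{E}\,|\det(\xi_1(x),\dots,\xi_n(x))|$, the expectation now taken over independent standard Gaussians $f_i\in V_i$. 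The factors $s_i(x)$ cancel against $\rho_{F(x)}(0)$, leaving
\[
\mathfrak M(V_1,\dots,V_n)=(2\pi)^{-n/2}\int_X \mathbb{E}\,\bigl|\det\bigl(\xi_1(x),\dots,\xi_n(x)\bigr)\bigr|\;dg(x).
\]

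Next I would use the identity $\mathbb{E}\,|\det(\xi_1,\dots,\xi_n)|=n!\,\vol(Z_{\xi_1},\dots,Z_{\xi_n})$, valid for independent integrable random vectors $\xi_i$ in $\R^n$, where $Z_\xi$ is the centrally symmetric zonoid with support function $u\mapsto\tfrac12\mathbb{E}\,|\langle\xi,u\rangle|$. One obtains it by approximating each $\xi_i$ in $L^1$ by a finitely supported vector, so that $Z_{\xi_i}$ becomes a zonotope $\sum_j[-a_j^{(i)},a_j^{(i)}]$ with $a_j^{(i)}=\tfrac12\,\mathbb{P}(\xi_i=v_j^{(i)})\,v_j^{(i)}$, applying the elementary formula $\vol(Z_1,\dots,Z_n)=\tfrac{2^n}{n!}\sum_{j_1,\dots,j_n}|\det(a_{j_1}^{(1)},\dots,a_{j_n}^{(n)})|$ (itself obtained by expanding the volume of a zonotope into parallelepipeds and polarizing) and passing to the limit via continuity of the mixed volume. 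For $\xi_i=\xi_i(x)$ one has $\langle\xi_i(x),u\rangle=\langle D_x^{(i)}u,f_i\rangle$, a centered Gaussian of standard deviation $\|D_x^{(i)}u\|$; hence the support function of $Z_{\xi_i(x)}$ is $u\mapsto(2\pi)^{-1/2}\|D_x^{(i)}u\|$, which is $(2\pi)^{-1/2}$ times the support function of ${\rm Ell}_{V_i}(x)=D_x^{(i)*}(B)$, so $Z_{\xi_i(x)}=(2\pi)^{-1/2}{\rm Ell}_{V_i}(x)$. By homogeneity of the mixed volume, $\mathbb{E}\,|\det(\xi_1(x),\dots,\xi_n(x))|=\tfrac{n!}{(2\pi)^{n/2}}\vol({\rm Ell}_{V_1}(x),\dots,{\rm Ell}_{V_n}(x))$, and substituting into the last display produces exactly \eqref{eqAK1}.

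The hard part is the rigorous justification of the Kac--Rice formula on the manifold $X$: one must check that $F$ is regular enough --- a.s.\ $C^1$, with a.s.\ only isolated nondegenerate zeros, and with the integrability needed to interchange expectation with the (finite, by compactness) sum over zeros. This rests on \eqref{eqne01}, which makes the universal zero set $\{(x,f):F_f(x)=0\}\subset X\times(V_1\times\cdots\times V_n)$ a smooth submanifold of codimension $n$, together with a Bulinskaya/Sard-type argument showing that $(F(x),JF(x))$ is, for a.e.\ choice of coefficients, nondegenerate at every common zero. Once this is in place the remaining two steps --- the radial/tangential splitting that removes the conditioning, and the zonoid identity --- are routine. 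As a consistency check one may observe that the right-hand side of \eqref{eqAK1} does not depend on the auxiliary metric $g$: rescaling $g\mapsto c^2 g$ multiplies $dg$ by $c^n$ and the mixed volume of the ellipsoids ${\rm Ell}_{V_i}(x)\subset T_x^*X$ by $c^{-n}$.
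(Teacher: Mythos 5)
The paper does not prove Proposition~\ref{prCrnAK}; it cites \cite{AK} for the result, and the surrounding text suggests \cite{AK} proceeds by an integral-geometric (Crofton/coarea) argument in the spirit of Proposition~\ref{prCr}, which the paper does use in the one-dimensional case. Your argument is an independent and, as far as I can tell, correct proof by a genuinely different route: the Kac--Rice formula on a manifold plus the Gaussian zonoid identity $\mathbb E\,|\det(\xi_1,\dots,\xi_n)|=n!\,\vol(Z_{\xi_1},\dots,Z_{\xi_n})$. The key mechanism is right: $D_x$ lands in the tangent space of the unit sphere at $\bar\Theta_i(x)$, so $D_x^{(i)*}$ kills the $\bar\Theta_i(x)$-direction in $V_i$ (under the metric identification $V_i\cong V_i^*$), which makes $\xi_i=D_x^{(i)*}f_i$ independent of the event $\{f_i(x)=0\}$ and lets the conditioning drop out; the cancellation of the factors $s_i(x)$ against the density $\rho_{F(x)}(0)$, and the identification $Z_{\xi_i}=(2\pi)^{-1/2}{\rm Ell}_{V_i}(x)$ from $h_{Z_{\xi_i}}(u)=\tfrac12\mathbb E|\langle D_xu,f_i\rangle|=(2\pi)^{-1/2}\|D_xu\|=(2\pi)^{-1/2}h_{{\rm Ell}_{V_i}(x)}(u)$, then give exactly the $n!/(2\pi)^n$ normalization in \eqref{eqAK1}. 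Your metric-rescaling sanity check is also correct: ${\rm Ell}_{V_i}(x)\subset T_x^*X$ depends only on the linear data $D_x^*$ and the unit ball of $V_i$, not on $g$, so under $g\mapsto c^2g$ the dual metric's $n$-dimensional (mixed) volume in $T_x^*X$ scales by $c^{-n}$ against $dg\mapsto c^n dg$. What your route buys is a direct probabilistic proof, close in spirit to the Zaporozhets--Kabluchko approach \cite{ZK} that the paper cites for background, at the cost of the standard but nontrivial Kac--Rice regularity verification (a.s.\ $C^1$, a.s.\ nondegenerate isolated zeros, interchange of expectation and counting), which you correctly flag as the hard technical step; the Crofton-style route in \cite{AK} sidesteps Kac--Rice regularity by working with the averaged incidence directly. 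One small notational caveat: ``$\bar\Theta_i(x)^\perp$'' should be read in $V_i$ after identifying $V_i\cong V_i^*$, i.e.\ as $\ker\bar\Theta_i(x)$; with that understood the argument is sound.
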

Assume henceforth that
$X$ is a homogeneous space of a compact Lie group $K$,
 and that the metric $g$ and the subspaces $V_i$ are $K$-invariant.
 Under these symmetries, the set of
 ellipsoids $\{{\rm Ell}_{V_i}(x)\}_{x \in X}$ is $K$-invariant for each $i$.
\begin{corollary}\label{corMain1}
$$
\mathfrak M(V_1,\ldots,V_n)= {n!\over(2\pi)^n} \vol\left({\rm Ell}_{V_1}(x),\ldots,{\rm Ell}_{V_n}(x)\right)\: \vol(X),
$$
for any $x \in X$.
\end{corollary}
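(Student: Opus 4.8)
The plan is to reduce the integral formula~\eqref{eqAK1} to an evaluation at a single point by invoking the homogeneity of $X$ together with the $K$-invariance of all the data. First I would observe that by Proposition~\ref{prCrnAK} we have
\[
\mathfrak M(V_1,\ldots,V_n)= \frac{n!}{(2\pi)^n}\int_X \vol\bigl({\rm Ell}_{V_1}(x),\ldots,{\rm Ell}_{V_n}(x)\bigr)\,dg(x),
\]
so it suffices to show that the integrand $\Phi(x):=\vol\bigl({\rm Ell}_{V_1}(x),\ldots,{\rm Ell}_{V_n}(x)\bigr)$ is a constant function on $X$; then the integral collapses to $\Phi(x_0)\,\vol(X)$ for any base point $x_0$, which is exactly the asserted identity.

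The key step is the $K$-invariance argument. For $k\in K$, the action map $L_k\colon X\to X$ is an isometry of $g$ (the metric is $K$-invariant), so its differential $(dL_k)_x\colon T_xX\to T_{kx}X$ is a linear isometry, and hence the adjoint (cotangent) map $(dL_k)_x^*\colon T_{kx}^*X\to T_x^*X$ is also a linear isometry. Because each $V_i$ is $K$-invariant and the inner product $(\cdot,\cdot)$ on $V_i$ is $K$-invariant, the evaluation mapping $\Theta_i$ and its normalization $\bar\Theta_i$ are $K$-equivariant, from which one deduces that $(dL_k)_x^*$ carries ${\rm Ell}_{V_i}(kx)$ isometrically onto ${\rm Ell}_{V_i}(x)$ for every $i$. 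Since mixed volume is invariant under a simultaneous linear isometry applied to all its arguments, we get $\Phi(kx)=\Phi(x)$ for all $k\in K$ and all $x\in X$; as $K$ acts transitively on $X$, $\Phi$ is constant.

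The main obstacle I anticipate is making the equivariance of the ellipsoids ${\rm Ell}_{V_i}$ fully precise: one must track carefully how the differential $D_x$ of $\bar\Theta_i$, its adjoint $D_x^*$, and the unit ball $B\subset V_i$ all transform under $k$, checking that the $K$-invariance of the inner product on $V_i$ (so that $B$ is $K$-stable and the duality $V_i\cong V_i^*$ is $K$-equivariant) combines correctly with the isometry property of $(dL_k)_x$ on the cotangent side. Once the chain of equivariances
\[
\Theta_i\circ L_k = (L_k^{V_i^*})\circ\Theta_i,\qquad
D_x^*\bigl((L_k^{V_i})^{-1}(B)\bigr)=D_x^*(B),\qquad
(dL_k)_x^*\bigl({\rm Ell}_{V_i}(kx)\bigr)={\rm Ell}_{V_i}(x)
\]
is assembled, the conclusion is immediate. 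A minor point to state explicitly is that $\vol(X)=\int_X dg$ is finite (as $X$ is compact), so the passage from the integral to the product is legitimate; and that the formula holds verbatim for \emph{any} $x$, since $\Phi$ being constant is what the corollary records.
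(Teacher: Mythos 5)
Your argument is correct and is exactly the route the paper takes: the paper simply notes, right before stating the corollary, that the $K$-invariance of the metric and the subspaces $V_i$ makes the family of ellipsoids $\{{\rm Ell}_{V_i}(x)\}_{x\in X}$ $K$-invariant, so the integrand in~\eqref{eqAK1} is constant and the integral over $X$ collapses to $\vol(X)$ times its value at any point. You have merely spelled out the equivariance chain that the paper leaves implicit.
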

%
%
%
%
%
%
%
Now consider the torus $S^n$ as the homogeneous space of its self-action,
let
$V={\rm Trig}(\Lambda)$,
and for $f,g\in V$,
$$(f,g)=\int_{S^n}f(s_1,\ldots,s_n) g(s_1,\ldots,s_n)\:ds_1\ldots ds_n$$
Since $\text{vol}(S^n) = (2\pi)^n$,
Theorem \ref{thmMy1} reduces to the following geometric identification:
\begin{proposition}\label{prEll=}
The Newton ellipsoid ${\rm Ell}(\Lambda)$ coincides with the $V$-ellipsoid ${\rm Ell}_V(\mathbf{1})$
at the identity $\mathbf{1} \in S^n$.
\end{proposition}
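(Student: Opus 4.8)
The plan is to prove the equality of the two ellipsoids by comparing their support functions, since a compact convex body is determined by its support function. Both bodies naturally live in $T_{\mathbf 1}^* S^n$: identifying the Lie algebra of $S^n$ with $\R^n$ via the angle coordinates $\theta=(\theta_1,\dots,\theta_n)$, we get $T_{\mathbf 1} S^n\cong\R^n$ and $T_{\mathbf 1}^* S^n\cong(\R^n)^*$, and each $\lambda\in\Lambda\subset\Z^n$ becomes a linear form on $T_{\mathbf 1} S^n$; with these identifications ${\rm Ell}(\Lambda)\subset T_{\mathbf 1}^* S^n$ has support function $h_\Lambda(v)=\bigl(\tfrac1{\#\Lambda}\sum_{\lambda\in\Lambda}\lambda(v)^2\bigr)^{1/2}$ on $v\in T_{\mathbf 1} S^n$. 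On the other hand, by Definition~\ref{dfEllV}, ${\rm Ell}_V({\mathbf 1})=D_{\mathbf 1}^*(B)$ with $B$ the unit ball of $V={\rm Trig}(\Lambda)$, and since $D_{\mathbf 1}^*$ is the transpose of $D_{\mathbf 1}$ its support function at $v$ is $\sup_{b\in B}\langle D_{\mathbf 1}^* b,v\rangle=\sup_{b\in B}\langle b,D_{\mathbf 1} v\rangle=\|D_{\mathbf 1} v\|_*$, the dual norm of $D_{\mathbf 1} v\in V^*$. Thus everything reduces to the identity $\|D_{\mathbf 1} v\|_*^2=\tfrac1{\#\Lambda}\sum_{\lambda\in\Lambda}\lambda(v)^2$ for all $v\in T_{\mathbf 1}S^n$.

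Next I would compute $D_{\mathbf 1}=d\bar\Theta_{\mathbf 1}$. The evaluation map $\Theta\colon S^n\to V^*$, $\Theta(x)(f)=f(x)$, is $S^n$-equivariant and $S^n$ acts by isometries on $V$, hence on $V^*$; therefore $x\mapsto\|\Theta(x)\|_*$ is constant on the homogeneous space $S^n$. Its value is obtained exactly as in Lemma~\ref{lmLin}(3), with the normalization $\vol(S^n)=(2\pi)^n$ in place of $1$: representing $\Theta(x)$ by $F_x\in V$ as in \eqref{eqFx} and integrating the constant $\|F_x\|^2=\sum_i f_i(x)^2$ over $S^n$ gives $\|F_x\|^2\,(2\pi)^n=\int_{S^n}\sum_i f_i^2\,d\theta=\sum_i\|f_i\|^2=\dim V$, so $\|\Theta(x)\|_*^2=c:=\dim V/(2\pi)^n=\#\Lambda/(2\pi)^n$, the last equality because $\dim{\rm Trig}(\Lambda)=\#\Lambda$ for centrally symmetric $\Lambda$. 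Since $\|\Theta(\cdot)\|_*\equiv\sqrt c$ is constant, $\bar\Theta=c^{-1/2}\Theta$, hence $D_{\mathbf 1} v=c^{-1/2}\,d\Theta_{\mathbf 1}(v)$, where $d\Theta_{\mathbf 1}(v)\in V^*$ is the functional $f\mapsto df_{\mathbf 1}(v)$. Through the inner-product isomorphism $V^*\cong V$, this functional corresponds to $G_v=\sum_i df_i|_{\mathbf 1}(v)\,f_i$ for any orthonormal basis $\{f_i\}$ of $V$, whence $\|D_{\mathbf 1} v\|_*^2=c^{-1}\|G_v\|^2=c^{-1}\sum_i\bigl(df_i|_{\mathbf 1}(v)\bigr)^2$.

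It remains to evaluate $\sum_i\bigl(df_i|_{\mathbf 1}(v)\bigr)^2$ in a convenient basis. Fix $\Lambda_+\subset\Lambda$ picking exactly one element from each pair $\{\lambda,-\lambda\}$ with $\lambda\neq0$; then the functions $\bigl(\tfrac2{(2\pi)^n}\bigr)^{1/2}\cos(\lambda,\theta)$ and $\bigl(\tfrac2{(2\pi)^n}\bigr)^{1/2}\sin(\lambda,\theta)$ for $\lambda\in\Lambda_+$, together with the constant $(2\pi)^{-n/2}$ when $0\in\Lambda$, form an orthonormal basis of ${\rm Trig}(\Lambda)$. At ${\mathbf 1}$, i.e.\ at $\theta=0$, the differential of a cosine (and of the constant) vanishes, while $d\bigl(\sin(\lambda,\theta)\bigr)|_0=\lambda$ as a covector; hence $\sum_i\bigl(df_i|_{\mathbf 1}(v)\bigr)^2=\tfrac2{(2\pi)^n}\sum_{\lambda\in\Lambda_+}\lambda(v)^2=\tfrac1{(2\pi)^n}\sum_{\lambda\in\Lambda}\lambda(v)^2$, again using central symmetry. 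Multiplying by $c^{-1}=(2\pi)^n/\#\Lambda$ gives $\|D_{\mathbf 1} v\|_*^2=\tfrac1{\#\Lambda}\sum_{\lambda\in\Lambda}\lambda(v)^2=h_\Lambda(v)^2$, which completes the comparison and hence the proof.

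I do not expect a genuine difficulty here; it is a direct computation. The one place that requires care is the bookkeeping of normalizations — in particular, reconciling the unnormalized measure on $S^n$ (with $\vol(S^n)=(2\pi)^n$, as used in Corollary~\ref{corMain1}) with the factor $1/\#\Lambda$ built into Definition~\ref{dfNEll}, which is precisely what the computation of the constant $c$ accomplishes — together with keeping the identifications $T_{\mathbf 1} S^n\cong\R^n$ and $T_{\mathbf 1}^* S^n\cong(\R^n)^*\supset\Lambda$ consistent so that $\lambda(v)$ denotes the same pairing on both sides.
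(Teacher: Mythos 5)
Your proof is correct and follows essentially the same route as the paper's: both reduce the equality of ellipsoids to the equality of support functions, identify the support function of ${\rm Ell}_V(\mathbf 1)=D_{\mathbf 1}^*(B)$ at $v\in T_{\mathbf 1}S^n$ with $\|D_{\mathbf 1}v\|_*$ (the paper via Lemma~\ref{lmh_V}, you by a direct duality argument), and then evaluate this with the same explicit orthonormal basis of sines and cosines, exploiting the constancy of $\|\Theta(\cdot)\|_*$ on the homogeneous space. Your bookkeeping of the $(2\pi)^n$ factor is in fact more careful than the text's: since the inner product in \S\ref{Laur41} uses the unnormalized measure $ds$ on $S^n$ rather than the probability measure assumed in Lemma~\ref{lmLin}, the identities~\eqref{eq1Th} and~\eqref{eq2Th} should each carry an extra factor $(2\pi)^{-n}$ (as your computation of $c=\#\Lambda/(2\pi)^n$ shows), but this factor cancels in the ratio and the final formula for the support function of ${\rm Ell}_V(\mathbf 1)$ is unaffected.
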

The equality of convex bodies is equivalent to the equality of their support functions.
To prove the required equality, the following well-known property of support functions is used.
\begin{lemma}\label{lmh_V}
Let $h_A\colon P^*\to\R$ be a support function
of a convex body $A$ in the space $P$,
and let $F(A)$ be the image of $A$ under the linear operator $F\colon P\to Q$.
Then the support function 
of a convex body $F(A)$
is a pullback of $h_A$ under the adjoint operator $F^*\colon Q^*\to P^*$.
\end{lemma}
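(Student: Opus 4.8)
The plan is to prove the asserted identity $h_{F(A)}=h_A\circ F^*$ by a direct computation from three ingredients: the definition of the support function (in the paper's convention), the description of $F(A)$ as the literal image set $\{F(a):a\in A\}$, and the defining property of the adjoint operator. Before that, I would note that $F(A)$ is indeed a convex body in $Q$: since $A$ is compact and convex and $F$ is linear, hence continuous, its image $F(A)$ is again compact and convex, so the support function $h_{F(A)}\colon Q^*\to\R$, $h_{F(A)}(\eta)=\max_{b\in F(A)}\eta(b)$, is well defined.

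The core step is the following chain of equalities, valid for an arbitrary covector $\eta\in Q^*$. Writing each point of $F(A)$ in the form $F(a)$ with $a\in A$, one has
\[
h_{F(A)}(\eta)=\max_{b\in F(A)}\eta(b)=\max_{a\in A}\eta\bigl(F(a)\bigr)=\max_{a\in A}(F^*\eta)(a)=h_A(F^*\eta),
\]
where the third equality is precisely the defining relation of the adjoint $F^*\colon Q^*\to P^*$, namely $\eta(F(a))=(F^*\eta)(a)$ for all $a\in P$ and $\eta\in Q^*$, and the last equality is the definition of $h_A$ applied to the covector $F^*\eta\in P^*$. Hence $h_{F(A)}=h_A\circ F^*$, i.e.\ the support function of $F(A)$ is the pullback of $h_A$ along $F^*$, as claimed.

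I do not expect any genuine obstacle. The only points that require a moment's care are the bookkeeping of which space is treated as primal and which as dual — so that $A\subset P$, $h_A$ lives on $P^*$, $F(A)\subset Q$, $h_{F(A)}$ lives on $Q^*$, and $F^*$ indeed maps $Q^*\to P^*$ — and the elementary remark that the passage from $\max$ over $F(A)$ to $\max$ over $A$ uses nothing about $F$ being injective or surjective, only that $F(A)$ is by definition the set of all values $F(a)$ for $a\in A$.
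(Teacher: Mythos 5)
Your proof is correct and is the standard one-line computation: the chain $h_{F(A)}(\eta)=\max_{a\in A}\eta(F(a))=\max_{a\in A}(F^*\eta)(a)=h_A(F^*\eta)$ is exactly what is needed, and your remarks on compactness of $F(A)$ and on not needing injectivity of $F$ are apt. The paper itself states this lemma as a well-known fact without proof, so there is nothing to compare beyond noting that your argument is the expected one.
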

From Lemma \ref{lmh_V} it follows that the support function of ellipsoid  ${\rm Ell}_V(\mathbf 1)$
equals the pullback of the function $|v|$
with respect to the linear operator $D_\mathbf 1\colon T_\mathbf 1S^n\to{\rm Trig}(\Lambda)$.
Therefore, Proposition \ref{prEll=} reduces Theorem \ref{thmMy1} to the next statement.
\begin{proposition}\label{prEll=2}
The pullback of the function $|v|^2$
with respect to the linear operator $D_\mathbf 1\colon T_\mathbf 1S^n\to{\rm Trig}(\Lambda)$
is equal to
$
 \frac{1}{\#\Lambda}\sum\nolimits_{\lambda\in\Lambda}\lambda^2(\xi),
$
where $\mathbf 1$ is the identity element of the torus $S^n$.
I.e. the square of the support function of the Newton ellipsoid ${\rm Ell}(\Lambda)$
is the pullback of the function $|v|^2$.
\end{proposition}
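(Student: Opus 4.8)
The plan is to compute the linear operator $D_{\mathbf 1}\colon T_{\mathbf 1}S^n\to{\rm Trig}(\Lambda)$ explicitly in the Fourier orthonormal basis of ${\rm Trig}(\Lambda)$ and to read off the pullback of $|v|^2$ directly. Recall from Definition~\ref{dfthetaTheta} that $D_{\mathbf 1}$ is the differential at $\mathbf 1$ of the normalized evaluation map $\bar\Theta=\Theta/\|\Theta\|$, where---after identifying ${\rm Trig}(\Lambda)^*$ with ${\rm Trig}(\Lambda)$ via the inner product---$\Theta(x)$ equals the vector $F_x=\sum_i f_i(x)f_i$ of Lemma~\ref{lmLin}(1) attached to an orthonormal basis $\{f_i\}$. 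Since the torus acts on ${\rm Trig}(\Lambda)$ by orthogonal transformations and $\Theta$ is equivariant, $x\mapsto\|\Theta(x)\|$ is constant (this is the content of Lemma~\ref{lmLin}(3)); hence $D_{\mathbf 1}=\|\Theta(\mathbf 1)\|^{-1}d\Theta_{\mathbf 1}$, and the pullback of $|v|^2$ by $D_{\mathbf 1}$ is the quadratic form $\xi\mapsto\|d\Theta_{\mathbf 1}(\xi)\|^2/\|\Theta(\mathbf 1)\|^2$ on $T_{\mathbf 1}S^n\cong\R^n$. It remains to evaluate the numerator and the denominator.

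For this I would use the several-variable analogue of the basis in Corollary~\ref{corLin}: fix a subset $\Lambda^{+}\subset\Lambda$ meeting each pair $\{\lambda,-\lambda\}$ of nonzero elements exactly once (possible since $\Lambda$ is centrally symmetric) and take $a\cos(\lambda,\theta)$ and $a\sin(\lambda,\theta)$ for $\lambda\in\Lambda^{+}$, together with a constant $b$ if $0\in\Lambda$; on the torus $S^n=(\R/2\pi\Z)^n$ the Fourier orthogonality relations make this an orthonormal basis and force $b^{2}=a^{2}/2$ (because $\int\cos^{2}(\lambda,\theta)=\tfrac12\int 1$). Differentiating $\Theta(x)=\sum_i f_i(x)f_i$ at $\theta=0$, the cosines and the constant contribute nothing, since their differentials vanish there, while $d[a\sin(\lambda,\theta)]_{\mathbf 1}(\xi)=a\,\lambda(\xi)$; therefore $d\Theta_{\mathbf 1}(\xi)=a\sum_{\lambda\in\Lambda^{+}}\lambda(\xi)\,f_{-\lambda}$ and
$\|d\Theta_{\mathbf 1}(\xi)\|^{2}=a^{2}\sum_{\lambda\in\Lambda^{+}}\lambda(\xi)^{2}=\tfrac{a^{2}}{2}\sum_{\lambda\in\Lambda}\lambda(\xi)^{2}$, the last step using central symmetry (the term $\lambda=0$, if present, contributes $0$). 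Evaluating $F_{\mathbf 1}$ in the same basis at $\theta=0$, where now the sines vanish, the cosines equal $1$ and the constant basis vector equals $b$, gives $\|\Theta(\mathbf 1)\|^{2}=a^{2}\#\Lambda^{+}+[\,0\in\Lambda\,]\,b^{2}=\tfrac{a^{2}}{2}\,\#\Lambda$, using $b^{2}=a^{2}/2$ and $2\#\Lambda^{+}+[\,0\in\Lambda\,]=\#\Lambda$.

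Dividing, the factor $\tfrac{a^{2}}{2}$ cancels and the pullback of $|v|^2$ by $D_{\mathbf 1}$ equals $\xi\mapsto\frac{1}{\#\Lambda}\sum_{\lambda\in\Lambda}\lambda(\xi)^2$, which is the asserted identity; and since by Definition~\ref{dfNEll} this quadratic form is exactly $h_\Lambda^{2}$, the square of the support function of ${\rm Ell}(\Lambda)$, the second sentence of the proposition follows at once (which, combined with Lemma~\ref{lmh_V} as in the discussion preceding the statement, then yields Proposition~\ref{prEll=}). The computation is elementary; the one point needing a little care is the bookkeeping of the Fourier normalizations $a,b$ and the separate handling of a possible $0\in\Lambda$, but this is harmless precisely because the normalizing constant cancels in the ratio, so that the conclusion is in fact insensitive to how one scales the inner product on ${\rm Trig}(\Lambda)$.
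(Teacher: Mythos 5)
Your proof is correct and follows essentially the same route as the paper: both compute $d\Theta_{\mathbf 1}$ and $\|\Theta(\mathbf 1)\|^2$ in the explicit Fourier orthonormal basis of ${\rm Trig}(\Lambda)$ (via Lemma~\ref{lmLin}) and take the ratio. Your version is in fact a bit more scrupulous with the normalization constants $a$, $b$ and the separate treatment of $0\in\Lambda$ — the paper's equations \eqref{eq1Th}, \eqref{eq2Th} actually suppress a harmless common factor of $1/(2\pi)^n$ — but, as you observe, this cancels in the quotient, so the two arguments agree.
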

%
Consider $\Z^n$ as an integer lattice in $\R^n$.
Let $l$ be a linear functional in $\R^n$,
not equal to $0$ at nonzero points of the set $\Lambda$.
Denote by $\Lambda_+$ the intersection of $\Lambda$ with the
half-space $l>0$.
Next we choose the special orthonormal basis $\{f_\lambda\colon\:\lambda\in\Lambda\}$ in ${\rm Trig}(\Lambda)$:
\[
f_\lambda(\theta)=
\begin{cases}
\dfrac{\sqrt 2}{(2\pi)^{n/2}}\cos(\lambda,\theta), & \lambda\in\Lambda_+,\\[0.6em]
\dfrac{\sqrt 2}{(2\pi)^{n/2}}\sin(\lambda,\theta), & 0\ne\lambda\notin\Lambda_+,\\[0.6em]
1/(2\pi)^{n/2}, & \lambda=0\in\Lambda.
\end{cases}
\]
Just as in the proof of Theorem \ref{thmL2},
using Lemma \ref{lmLin},
we obtain that
\begin{equation}\label{eq1Th}
\forall s\in S^n\colon\:|\Theta(s)|^2=\#\Lambda,
\end{equation}
and, for any tangent vector $\xi\in T_sS^n$,
\begin{equation}\label{eq2Th}
|d\Theta(\xi)|^2=\sum\nolimits_{\lambda\in\Lambda} (\lambda,\xi)^2
\end{equation}
From (\ref{eq2Th}) it follows that
that the value of the function $d\Theta^*(|v^2|)\colon T_\mathbf 1S^n\to\R$ on the tangent vector $\xi$ is equal to $\sum\nolimits_{\lambda\in\Lambda} (\lambda,\xi)^2$.
Accordingly, the value of $d(\Theta/|\Theta|)^*(|v|)$ on $\xi$ is equal to
$$
\sqrt{\frac{1}{\#\Lambda}\sum\nolimits_{\lambda\in\Lambda} (\lambda,\xi)^2}
$$
Proposition \ref{prEll=} and Theorem \ref{thmMy1} (1) are proved.
Statements (2) and (3) of Theorem \ref{thmMy1} are direct consequences of statement (1).
\subsubsection{Theorem  \ref{thmMy2}}\label{Laur42}
Let $\P_{i,\C}$ be the projectivization of the space of Laurent polynomials with spectrum $\Lambda_i$.
A system of Laurent polynomials $f_1,\ldots,f_n$ will be identified with a point of the product
$\P_{1,\C}\times\cdots\times\P_{n,\C}$.
Likewise, we identify ${\rm Trig}(\Lambda_i)$ with the real subspace of Laurent polynomials having spectrum $\Lambda_i$,
and hence identify systems of $n$ real Laurent polynomials with points of the space
$\P_1\times\cdots\times\P_n$,
where $\P_i$ denotes the projectivization of ${\rm Trig}(\Lambda_i)$.

Consider the natural embedding
\[
\iota\colon \P_1\times\cdots\times\P_n \longrightarrow \P_{1,\C}\times\cdots\times\P_{n,\C}.
\]
Its image is Zariski dense in $\P_{1,\C}\times\cdots\times\P_{n,\C}$.

By Theorem~\ref{thmBKK}, there exists an algebraic hypersurface
$D\subset \P_{1,\C}\times\cdots\times\P_{n,\C}$
such that for any system $(f_1,\ldots,f_n)\notin D$,
the number of common zeros equals
\[
n!\vol\bigl({\rm conv}(\Lambda_1),\ldots,{\rm conv}(\Lambda_n)\bigr).
\]
Since the image of\/ $\iota$ is Zariski dense,
its inverse image $\iota^{-1}(D)$ is contained in a closed real hypersurface in
$\P_1\times\cdots\times\P_n$.
In particular, $\iota^{-1}(D)$ has measure zero.
Applying Theorem~\ref{thmMy2}, we obtain the desired equality.
\subsubsection{Theorems  \ref{thmMy3}, \ref{thmMy4}}\label{Laur43}
Let $\Delta\subset\mathfrak t^*$ be a centrally symmetric compact convex set, and set
$\Lambda=\Delta\cap{\Z^n}^*$.
We further impose the following condition.

\medskip
\noindent
\textbf{(*)}
If\/ $\dim\Delta=k<n$, then $\Delta$ is contained in a $k$-dimensional subspace
$V_\Delta\subset\mathfrak t^*$ generated by vectors of the lattice ${\Z^n}^*$.
\par\smallskip
\medskip
Recall that a centrally symmetric finite set $\Lambda\subset{\Z^n}^*$ determines
the Newton ellipsoid ${\rm Ell}(\Lambda)\subset\mathfrak t^*$ with support function
$h_\Lambda=\sqrt{F_\Lambda}$, where
\[
F_\Lambda(\xi)=\frac{1}{\#\Lambda}\sum_{\lambda\in\Lambda}\lambda^2(\xi).
\]
For $m>0$ define
\[
\Delta_m=m\Delta,\qquad
\Lambda_m=\Delta_m\cap{\Z^n}^*,\qquad
N_{\Lambda,m}=\#\Lambda_m,
\]
and
\[
F_{\Lambda,m}(\xi)=\frac{1}{N_{\Lambda,m}}\sum_{\lambda\in\Lambda_m}\lambda^2(\xi).
\]
The function $h_{\Lambda,m}=\sqrt{F_{\Lambda,m}}$ is the support function of the ellipsoid
${\rm Ell}(\Lambda_m)$.

Condition {\rm(*)} implies that ${\Z^n}^*\cap V_\Delta$ is a full-rank lattice in the
$k$-dimensional space $V_\Delta$.
Fix a volume form $dx$ on $V_\Delta$ normalized so that the unit cube of this lattice has volume $1$.
For $\xi\in\mathfrak t$ set
\[
F_\Delta(\xi)=\frac{1}{\vol_k(\Delta)}\int_{\Delta}\langle x,\xi\rangle^2\,dx.
\]
Then $F_\Delta\colon\mathfrak t\to\R$ is a nonnegative quadratic form.
Let $h_\Delta=\sqrt{F_\Delta}$ and denote by ${\rm Ell}(\Delta)$ the ellipsoid in $\mathfrak t^*$
with support function $h_\Delta$.
\begin{theorem}\label{thmMy4}
Assume that condition {\rm(*)} holds for convex sets $\Delta_i$, and let
$\Lambda_i=\Delta_i\cap{\Z^n}^*$.
Then
\begin{equation}\label{eqMain}
\lim_{\inf(m_1,\ldots,m_n)\to\infty}
{\mathcal P}\bigl((\Lambda_1)_{m_1},\ldots,(\Lambda_n)_{m_n}\bigr)
=
\frac{\vol({\rm Ell}(\Delta_1),\ldots,{\rm Ell}(\Delta_n))}
{\vol(\Delta_1,\ldots,\Delta_n)} .
\end{equation}
\end{theorem}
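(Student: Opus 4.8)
The plan is to combine Theorem~\ref{thmMy2}, which expresses $\mathcal P$ as the ratio of a mixed volume of Newton ellipsoids over a mixed volume of Newton polytopes, with a limiting analysis of how the ellipsoids ${\rm Ell}((\Lambda_i)_{m_i})$ and the polytopes ${\rm conv}((\Lambda_i)_{m_i})$ behave after rescaling by $m_i$. By homogeneity of mixed volume, $\mathcal P((\Lambda_1)_{m_1},\ldots,(\Lambda_n)_{m_n})$ is unchanged if we divide the $i$-th body by $m_i$; so it suffices to show that $\frac{1}{m_i}{\rm Ell}((\Lambda_i)_{m_i})\to{\rm Ell}(\Delta_i)$ and $\frac{1}{m_i}{\rm conv}((\Lambda_i)_{m_i})\to\Delta_i$ in the Hausdorff metric as $m_i\to\infty$, and then invoke continuity of mixed volumes with respect to Hausdorff convergence. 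The polytope convergence is classical lattice-point geometry: $\frac{1}{m}(m\Delta\cap{\Z^n}^*)$ fills out $\Delta$ in the limit, using condition~(*) to guarantee the lattice has full rank inside the affine span of $\Delta$ so that no dimension is lost.

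The heart of the argument is the ellipsoid convergence, i.e. showing that the support function $h_{\Lambda_i,m}(\xi)/m=\sqrt{F_{\Lambda_i,m}(\xi)}/m$ converges uniformly on the unit sphere to $h_{\Delta_i}(\xi)=\sqrt{F_{\Delta_i}(\xi)}$. This is a Riemann-sum statement:
\[
\frac{1}{m^2}F_{\Lambda,m}(\xi)=\frac{1}{m^2 N_{\Lambda,m}}\sum_{\lambda\in m\Delta\cap{\Z^n}^*}\lambda^2(\xi)
=\frac{1}{N_{\Lambda,m}}\sum_{\lambda\in m\Delta\cap{\Z^n}^*}\Bigl(\tfrac{\lambda}{m}\Bigr)^2(\xi),
\]
and as $m\to\infty$ the points $\lambda/m$ become equidistributed in $\Delta$ with respect to Lebesgue measure $dx$ normalized to the lattice covolume, while $N_{\Lambda,m}=\#(m\Delta\cap{\Z^n}^*)\sim m^k\vol_k(\Delta)$. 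Hence the average converges to $\frac{1}{\vol_k(\Delta)}\int_\Delta\langle x,\xi\rangle^2\,dx=F_\Delta(\xi)$. I would cite the asymptotic analysis of \cite{K22} for the precise equidistribution/counting estimates; the uniformity in $\xi$ over the compact sphere follows since $\langle x,\xi\rangle^2$ is jointly continuous and $\Delta$ is compact, so the convergence of the quadratic forms is automatically uniform on bounded sets of $\xi$. Taking square roots preserves uniform convergence away from the zero set, and condition~(*) ensures $F_\Delta$ is positive definite on $V_\Delta$ so the ellipsoid ${\rm Ell}(\Delta)$ is genuinely $k$-dimensional and the degenerate directions match up correctly.

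Once both convergences are in hand, continuity of the mixed volume $\vol(\cdot,\ldots,\cdot)$ under Hausdorff convergence of its arguments gives
\[
\lim \vol\bigl(\tfrac1{m_1}{\rm Ell}((\Lambda_1)_{m_1}),\ldots\bigr)=\vol\bigl({\rm Ell}(\Delta_1),\ldots,{\rm Ell}(\Delta_n)\bigr)
\]
and similarly for the polytopes, so the ratio in Theorem~\ref{thmMy2} converges to the claimed limit. One technical point requiring care: the denominator $\vol(\Delta_1,\ldots,\Delta_n)$ must be nonzero for the ratio to make sense in the limit, which is where the full-dimensionality hypothesis built into~(*) (together with the $\Delta_i$ being genuine convex bodies whose mixed volume one is implicitly assuming is positive) is used; if some $\Delta_i$ are lower-dimensional one should note that the mixed volume is positive precisely when the $\Delta_i$ jointly span $\R^n$, and the same spanning condition makes both numerator and denominator nondegenerate. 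The main obstacle is making the equidistribution step fully rigorous with uniform control in $\xi$ — everything else is either homogeneity of mixed volume, the already-proved Theorem~\ref{thmMy2}, or standard convexity-continuity facts — so I would lean on \cite{K22} for the counting lemma $\#(m\Delta\cap{\Z^n}^*)=m^k\vol_k(\Delta)+o(m^k)$ and the analogous weighted sum asymptotics, and keep the remaining steps brief.
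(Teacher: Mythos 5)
Your proposal is correct and follows essentially the same route as the paper: invoke Theorem~\ref{thmMy2}, rescale by $m_i$ (homogeneity of mixed volume), prove Hausdorff convergence $\tfrac{1}{m}{\rm Ell}(\Lambda_m)\to{\rm Ell}(\Delta)$ via a Riemann-sum argument on the quadratic form $F_{\Lambda,m}/m^2\to F_\Delta$ (the paper's Lemma~\ref{lmLim} and Corollary~\ref{corLim}), prove $\tfrac{1}{m}{\rm conv}(\Lambda_m)\to\Delta$ (the paper's Lemma~\ref{lmLimConv}), then pass to the limit by continuity of mixed volumes. Your explicit mention of the nondegeneracy of the denominator and the homogeneity step are small elaborations that the paper leaves implicit, but the substance is identical.
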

The proof requires several preliminary results.

\begin{lemma}\label{lmLim}
As $m\to\infty$, the functions $\frac{1}{m^2}F_{\Lambda,m}$ converge locally uniformly
to $F_\Delta$.
\end{lemma}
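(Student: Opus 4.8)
The plan is to recognize $\tfrac{1}{m^2}F_{\Lambda,m}$ as an empirical average of a fixed family of quadratic functions over an equidistributing family of rescaled lattice points inside $\Delta$, and then to invoke the classical equidistribution of such points in a convex body. The first step is a change of scale. Since $0\in\Delta$ and $\Delta$ is centrally symmetric, condition {\rm(*)} gives $m\Delta\subset V_\Delta$, hence $\Lambda_m=m\Delta\cap{\Z^n}^*=m\Delta\cap L$ with $L:={\Z^n}^*\cap V_\Delta$ a full-rank lattice in the $k$-dimensional space $V_\Delta=\mathrm{span}(\Delta)$ ($k=\dim\Delta$; in the top-dimensional case $V_\Delta=\mathfrak t^*$, $L={\Z^n}^*$). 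Writing $\lambda=my$ with $y\in\Delta\cap\tfrac1m L$ and using $\lambda^2(\xi)=m^2\,y^2(\xi)$,
\[
\frac{1}{m^2}F_{\Lambda,m}(\xi)
=\frac{1}{\#\bigl(\Delta\cap\tfrac1m L\bigr)}\sum_{y\in\Delta\cap\frac1m L}q_\xi(y),
\qquad q_\xi(y):=\langle y,\xi\rangle^2 .
\]
Thus $\tfrac{1}{m^2}F_{\Lambda,m}(\xi)$ is exactly the mean value of the continuous function $q_\xi|_\Delta$ over the finite set $\Delta\cap\tfrac1m L$.

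The second step is the equidistribution input. Since $\Delta$ is a convex body, its boundary is Lebesgue-null in $V_\Delta$, so for any continuous $g\colon\Delta\to\R$ the usual Riemann-sum estimates hold: with $dx$ normalized so that the unit cell of $L$ has volume $1$ (as fixed in the text),
\[
\#\bigl(\Delta\cap\tfrac1m L\bigr)=m^k\bigl(\vol_k(\Delta)+o(1)\bigr),\qquad
\sum_{y\in\Delta\cap\frac1m L}g(y)=m^k\Bigl(\int_\Delta g\,dx+o(1)\Bigr).
\]
Dividing, the normalized counting measures of $\Delta\cap\tfrac1m L$ converge weakly to normalized Lebesgue measure on $\Delta$. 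Taking $g=q_\xi$ yields, for each fixed $\xi$,
\[
\frac{1}{m^2}F_{\Lambda,m}(\xi)\longrightarrow\frac{1}{\vol_k(\Delta)}\int_\Delta\langle x,\xi\rangle^2\,dx=F_\Delta(\xi).
\]

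For the local uniformity, fix a compact set $C\subset\mathfrak t$. The family $\{q_\xi|_\Delta:\xi\in C\}$ is uniformly bounded and equicontinuous on the compact set $\Delta$, since $|q_\xi(y)-q_{\xi'}(y')|$ is bounded by a constant depending only on $\sup_\Delta|y|$ and $\sup_C|\xi|$ times $|y-y'|+|\xi-\xi'|$. Partitioning $\Delta$ into finitely many Jordan-measurable pieces of small diameter and applying the estimates above to each piece yields an $o(1)$ error in the Riemann-sum approximation that is uniform in $\xi\in C$; together with the continuity of $\xi\mapsto F_\Delta(\xi)$, a standard three-$\varepsilon$ argument promotes the pointwise convergence to uniform convergence on $C$, which is the assertion. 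The genuine content lies in the lattice-point/Riemann-sum asymptotics of the second step, which use condition {\rm(*)} (so that $\Delta$ sits in a rational subspace in which $L$ has full rank — otherwise $\Delta\cap\tfrac1m L$ need not equidistribute, nor even be nonempty) and the convexity of $\Delta$ (so that $\partial\Delta$ is null and the remainder is genuinely $o(m^k)$). Everything else — the rescaling, the cancellation of the $m^k$ factors, and the passage to local uniformity — is routine; the only mild obstacle I anticipate is phrasing the counting estimate with a remainder uniform in $\xi$.
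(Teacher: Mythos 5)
Your proof follows exactly the paper's own route: rescale by $1/m$ so that $\tfrac{1}{m^2}F_{\Lambda,m}(\xi)$ becomes a Riemann sum for $\tfrac{1}{\vol_k(\Delta)}\int_\Delta\langle x,\xi\rangle^2\,dx$ over the lattice points $\tfrac{1}{m}\Lambda_m\subset\Delta$, using $N_{\Lambda,m}\sim m^k\vol_k(\Delta)$. The only difference is that you spell out the local uniformity (equicontinuity of $\{q_\xi\}$ on compacts, uniform Riemann-sum remainder), which the paper leaves implicit; this is a welcome but inessential addition.
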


\begin{proof}
Let $\dim\Delta=k$.
Then
\[
\frac{1}{m^2}F_{\Lambda,m}(\xi)
=\frac{1}{m^2N_{\Lambda,m}}\sum_{\lambda\in\Lambda_m}\langle\xi,\lambda\rangle^2
=\frac{m^k}{N_{\Lambda,m}}
\sum_{\alpha\in\frac{\Lambda_m}{m}}\langle\xi,\alpha\rangle^2\,\frac{1}{m^k}.
\]
Since $N_{\Lambda,m}\asymp m^k\vol_k(\Delta)$, the last sum is a Riemann sum for the integral
$\int_\Delta\langle x,\xi\rangle^2\,dx$ over the partition of $\Delta$ with nodes
$\frac{\Lambda_m}{m}$.
From this we obtain that $\frac{1}{m^2}F_{\Lambda,m}(\xi)\to F_\Delta(\xi)$.
The lemma is proven.
\end{proof}
\begin{corollary}\label{corLim}
As $m\to\infty$:
\begin{enumerate}
\item the functions $\frac{1}{m}h_{\Lambda,m}$ converge locally uniformly to the support
function $h_\Delta$ of ${\rm Ell}(\Delta)$;
\item the ellipsoids $\frac{1}{m}{\rm Ell}(\Lambda_m)$ converge to ${\rm Ell}(\Delta)$
in the Hausdorff topology.
\end{enumerate}
\end{corollary}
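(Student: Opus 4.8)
The plan is to deduce both statements directly from Lemma~\ref{lmLim}, which is the only nontrivial input. For part~(1), first observe that by positive homogeneity of support functions the function $\tfrac1m h_{\Lambda,m}$ is precisely the support function of $\tfrac1m{\rm Ell}(\Lambda_m)$, and that by construction $\tfrac1m h_{\Lambda,m}=\sqrt{\tfrac1{m^2}F_{\Lambda,m}}$, while $h_\Delta=\sqrt{F_\Delta}$. Thus the claim amounts to transferring the locally uniform convergence $\tfrac1{m^2}F_{\Lambda,m}\to F_\Delta$ of Lemma~\ref{lmLim} through the square root. This is immediate from the elementary global Hölder bound $|\sqrt a-\sqrt b|\le\sqrt{|a-b|}$ (valid for all $a,b\ge0$): on any compact $K\subset\mathfrak t$,
\[
\sup_{\xi\in K}\Bigl|\tfrac1m h_{\Lambda,m}(\xi)-h_\Delta(\xi)\Bigr|
\le\sqrt{\ \sup_{\xi\in K}\Bigl|\tfrac1{m^2}F_{\Lambda,m}(\xi)-F_\Delta(\xi)\Bigr|\ }\longrightarrow 0 .
\]
I would stress that this argument is insensitive to whether the quadratic form $F_\Delta$ is nondegenerate; in particular it covers the degenerate case $\dim\Delta=k<n$ permitted by condition~(*), in which $F_\Delta$ vanishes identically on the annihilator of $V_\Delta$.

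For part~(2), I would invoke the standard equivalence in convex geometry between convergence of compact convex sets in the Hausdorff metric and uniform convergence of their support functions on the unit sphere. By homogeneity of support functions, the locally uniform convergence on $\mathfrak t$ established in part~(1) is the same as uniform convergence on the unit sphere, so it yields exactly the Hausdorff convergence $\tfrac1m{\rm Ell}(\Lambda_m)\to{\rm Ell}(\Delta)$. The one point that deserves a remark is that ${\rm Ell}(\Delta)$ and the bodies $\tfrac1m{\rm Ell}(\Lambda_m)$ must be genuine nonempty compact convex sets for this dictionary to apply: this holds because each is a centrally symmetric ellipsoid (possibly of degenerate dimension) containing the origin, and the support-function/Hausdorff correspondence applies verbatim to such sets without any full-dimensionality hypothesis.

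I do not expect a real obstacle here: once Lemma~\ref{lmLim} is available, the corollary is essentially a formality. If anything is delicate, it is only the bookkeeping in the lower-dimensional case $\dim\Delta<n$ — ensuring that the passage from $F$ to $h=\sqrt F$ is still uniform near the subspace where $F_\Delta$ degenerates (handled by the global square-root bound above) and that Hausdorff convergence of degenerate ellipsoids is legitimate (handled by the remark in the previous paragraph).
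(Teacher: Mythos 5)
Your proof is correct and follows the same route as the paper, which simply states that both assertions follow immediately from Lemma~\ref{lmLim} and the identity $h_\Delta=\sqrt{F_\Delta}$. You have supplied the standard details the paper treats as immediate: the global bound $|\sqrt a-\sqrt b|\le\sqrt{|a-b|}$ to pass locally uniform convergence through the square root, and the equivalence of Hausdorff convergence of compact convex sets with uniform convergence of support functions on the unit sphere.
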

\begin{proof}
Both assertions follow immediately from Lemma~\ref{lmLim} and the identity
$h_\Delta=\sqrt{F_\Delta}$.
\end{proof}
\begin{lemma}\label{lmLimConv}
If condition {\rm(*)} holds for a convex body $\Delta$, then
$\frac{1}{m}{\rm conv}(\Lambda_m)$ converges to $\Delta$ in the Hausdorff topology as $m\to\infty$.
\end{lemma}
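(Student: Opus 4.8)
The plan is to establish Hausdorff convergence $\frac1m\conv(\Lambda_m)\to\Delta$ by proving a two-sided inclusion up to an error $o(1)$, treating the full-dimensional case first and then reducing the lower-dimensional case to it by working inside $V_\Delta$. Since $\Lambda_m=\Delta_m\cap{\Z^n}^*=m\Delta\cap{\Z^n}^*$, scaling by $1/m$ gives $\frac1m\Lambda_m=\Delta\cap\frac1m{\Z^n}^*$, so $\frac1m\conv(\Lambda_m)=\conv\bigl(\Delta\cap\frac1m{\Z^n}^*\bigr)\subseteq\Delta$ automatically, and only the reverse inclusion $\Delta\subseteq\frac1m\conv(\Lambda_m)+\varepsilon_m B$ with $\varepsilon_m\to0$ needs work.

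First I would handle the case $\dim\Delta=n$. Here ${\Z^n}^*$ has full rank, so its dilates $\frac1m{\Z^n}^*$ become $\varepsilon_m$-dense in $\R^n$ with $\varepsilon_m=c/m\to0$. The key point is that because $\Delta$ has nonempty interior, for $m$ large every point of $\Delta$ at distance $\geq 2\varepsilon_m$ from $\partial\Delta$ lies within $\varepsilon_m$ of a lattice point of $\frac1m{\Z^n}^*$ that itself lies in $\Delta$ (a small ball around such a point stays inside $\Delta$ and must meet the $\varepsilon_m$-dense set). Thus the inner parallel body $\Delta_{-2\varepsilon_m}=\{x\in\Delta:\mathrm{dist}(x,\partial\Delta)\geq2\varepsilon_m\}$ is contained in $\frac1m\conv(\Lambda_m)+\varepsilon_m B$. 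Since $\Delta$ is convex with nonempty interior, $\Delta_{-2\varepsilon_m}\to\Delta$ in the Hausdorff metric as $\varepsilon_m\to0$ (standard: pick an interior point $p$, then $(1-c\varepsilon_m)(\Delta-p)+p\subseteq\Delta_{-2\varepsilon_m}$ for suitable $c$). Combining, $\Delta\subseteq\frac1m\conv(\Lambda_m)+\varepsilon_m' B$ with $\varepsilon_m'\to0$, which together with $\frac1m\conv(\Lambda_m)\subseteq\Delta$ gives the claim.

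For the case $\dim\Delta=k<n$, condition (*) is exactly what makes the reduction work: $\Delta\subset V_\Delta$ and $L:={\Z^n}^*\cap V_\Delta$ is a full-rank lattice in the $k$-dimensional space $V_\Delta$. Since $\Delta_m\cap{\Z^n}^*=m\Delta\cap{\Z^n}^*\subseteq V_\Delta\cap{\Z^n}^*=L$, all of $\Lambda_m$ lies in $L$, and the previous paragraph applies verbatim with $\R^n$ replaced by $V_\Delta$, $\Z^n$ replaced by $L$, and $\Delta$ full-dimensional in $V_\Delta$; the Hausdorff convergence in $V_\Delta$ is a fortiori Hausdorff convergence in $\R^n$. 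Without (*) the claim can fail (e.g.\ $\Delta$ a segment in an irrational direction contains no nonzero lattice points under any dilation), so this hypothesis is essential and is the one delicate input.

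The main obstacle is the reverse inclusion near the boundary of $\Delta$: a priori the integer points of $\Delta_m$ could cluster away from $\partial\Delta_m$, leaving the convex hull strictly smaller. The argument above resolves this by using only that $\Delta$ has nonempty interior (within $V_\Delta$) together with the uniform $O(1/m)$ density of the lattice $\frac1mL$, so the boundary layer has width $O(1/m)\to0$; no quantitative lattice-point counting beyond this density estimate is required. I expect the write-up to be short, the only care needed being the passage to $\Delta_{-2\varepsilon_m}$ and the elementary fact that inner parallel bodies of a convex body with nonempty interior converge back to it.
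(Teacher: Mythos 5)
Your proof is correct. The paper itself offers no argument here — its entire proof reads ``This is an immediate consequence of the definition of $\Lambda_m$'' — so there is no stated method to compare against; you have simply supplied the details the paper elides. Your argument is the natural one: the inclusion $\frac1m\conv(\Lambda_m)\subseteq\Delta$ is automatic from $\Lambda_m\subseteq m\Delta$, the reverse inclusion up to $o(1)$ follows from the $O(1/m)$ mesh of the dilated lattice together with the fact that inner parallel bodies of a convex body with nonempty interior Hausdorff-converge to the body, and condition (*) is exactly what reduces the lower-dimensional case to the full-dimensional one inside $V_\Delta$ (where, as the paper notes just before the lemma, $\Z^{n*}\cap V_\Delta$ is a full-rank lattice). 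Your aside that the statement fails without (*) (segment in an irrational direction) is a useful sanity check that the paper leaves implicit. One small point of precision: the $\varepsilon_m$-density estimate gives $\Delta_{-2\varepsilon_m}\subseteq\conv\bigl(\Delta\cap\tfrac1m\Z^{n*}\bigr)$ directly (every point of the inner parallel body is a convex combination of nearby admissible lattice points), so you can even drop the extra ``$+\varepsilon_m B$'' in that step; this does not affect the conclusion.
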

\begin{proof}
This is an immediate consequence of the definition of $\Lambda_m$.
\end{proof}
\medskip
\noindent
\emph{Proof of Theorem~\ref{thmMy4}.}
Let $M_i=\bigl((m_1)_i,\ldots,(m_n)_i\bigr)$ be a sequence of $n$-tuples of positive integers
such that for each $k$ the sequence $m_{k,1},m_{k,2},\ldots$ tends to infinity.
Apply Theorem~\ref{thmMy2} to the spectra
$(\Lambda_1)_{(m_1)_i},\ldots,(\Lambda_n)_{(m_n)_i}$
and pass to the limit using Corollary~\ref{corLim}(2) and Lemma~\ref{lmLimConv}.
This yields~\eqref{eqMain}.
\qed
\begin{corollary}\label{corHom}
Let $\alpha_1,\ldots,\alpha_n>0$.
Under the substitutions $\Delta_i\mapsto\alpha_i\Delta_i$
(condition {\rm(*)} being preserved),
the asymptotic behaviour of
${\mathcal P}(\Lambda_{m_1},\ldots,\Lambda_{m_n})$ remains unchanged.
\end{corollary}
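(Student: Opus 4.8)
The plan is to reduce the corollary directly to Theorem~\ref{thmMy4} by checking that the right-hand side of \eqref{eqMain} is invariant under the rescalings $\Delta_i\mapsto\alpha_i\Delta_i$. First I would note that replacing $\Delta_i$ by $\alpha_i\Delta_i$ merely reparametrizes the dilation family: $(\alpha_i\Delta_i)_m=\alpha_i(\Delta_i)_m=(\Delta_i)_{\alpha_i m}$ in the sense of the underlying convex sets, and since $\inf(m_1,\ldots,m_n)\to\infty$ if and only if $\inf(\alpha_1 m_1,\ldots,\alpha_n m_n)\to\infty$ (the $\alpha_i$ being fixed positive constants), both sides of \eqref{eqMain} are computed over cofinal families. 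Moreover condition {\rm(*)} depends only on the linear span $V_{\Delta_i}$, which is unchanged under positive scaling, so the hypothesis is indeed preserved as asserted in the statement.

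Next I would verify the homogeneity of the two convex bodies entering the limit. On the denominator, the Newton polyhedron satisfies ${\rm conv}(\alpha_i\Lambda_i\text{-data})$; more precisely, by Lemma~\ref{lmLimConv} the limiting body attached to $\alpha_i\Delta_i$ is $\alpha_i\Delta_i$ itself, so $\Delta_i$ is replaced by $\alpha_i\Delta_i$. On the numerator, ${\rm Ell}(\Delta_i)$ has support function $h_{\Delta_i}=\sqrt{F_{\Delta_i}}$ with
\[
F_{\alpha_i\Delta_i}(\xi)=\frac{1}{\vol_k(\alpha_i\Delta_i)}\int_{\alpha_i\Delta_i}\langle x,\xi\rangle^2\,dx
=\frac{1}{\alpha_i^k\vol_k(\Delta_i)}\cdot\alpha_i^{k+2}\int_{\Delta_i}\langle y,\xi\rangle^2\,dy
=\alpha_i^2 F_{\Delta_i}(\xi),
\]
after the change of variables $x=\alpha_i y$. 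Hence $h_{\alpha_i\Delta_i}=\alpha_i h_{\Delta_i}$ and therefore ${\rm Ell}(\alpha_i\Delta_i)=\alpha_i\,{\rm Ell}(\Delta_i)$. Thus both ${\rm Ell}(\Delta_i)$ and $\Delta_i$ scale by the same factor $\alpha_i$ under the substitution.

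Finally I would invoke multilinearity and positive homogeneity of the mixed volume in each argument: $\vol(\alpha_1 A_1,\ldots,\alpha_n A_n)=\alpha_1\cdots\alpha_n\,\vol(A_1,\ldots,A_n)$. Applying this to numerator and denominator of the right-hand side of \eqref{eqMain}, the common factor $\alpha_1\cdots\alpha_n$ cancels, so
\[
\frac{\vol({\rm Ell}(\alpha_1\Delta_1),\ldots,{\rm Ell}(\alpha_n\Delta_n))}{\vol(\alpha_1\Delta_1,\ldots,\alpha_n\Delta_n)}
=\frac{\vol({\rm Ell}(\Delta_1),\ldots,{\rm Ell}(\Delta_n))}{\vol(\Delta_1,\ldots,\Delta_n)},
\]
and by Theorem~\ref{thmMy4} this is precisely the limit of $\mathcal P$ in both cases, proving the corollary. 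I do not expect a serious obstacle here; the only point requiring a little care is confirming that the discrete family $(\alpha_i\Delta_i)\cap{\Z^n}^*$, rather than the continuously scaled body, still has the stated Hausdorff limit — but this is exactly the content of Lemma~\ref{lmLimConv} applied to $\alpha_i\Delta_i$, which again satisfies {\rm(*)}, so no new argument is needed.
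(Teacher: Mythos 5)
Your proof is correct and follows essentially the same route as the paper: establish the homogeneity ${\rm Ell}(\alpha_i\Delta_i)=\alpha_i\,{\rm Ell}(\Delta_i)$ (you do this cleanly by a change of variables in $F_{\Delta_i}$, whereas the paper simply asserts $h_{\alpha\Delta_i}=\alpha h_{\Delta_i}$), then invoke positive homogeneity of mixed volume to cancel the common factor $\alpha_1\cdots\alpha_n$ in numerator and denominator of \eqref{eqMain}. Your opening paragraph about cofinal reparametrization of the dilation family is unnecessary — the corollary just asks you to compare the right-hand side of \eqref{eqMain} evaluated at $\Delta_i$ versus at $\alpha_i\Delta_i$, with no reparametrization needed — but it does no harm, and the remainder is exactly the paper's argument in expanded form.
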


\begin{proof}
By Lemma~\ref{lmLim}, $h_{\alpha\Delta_i}=\alpha h_{\Delta_i}$.
Hence both the numerator and denominator in~\eqref{eqMain} are multiplied by
$\alpha_1\cdots\alpha_n$.
\end{proof}
\medskip
\noindent
\emph{Proof of Theorem~\ref{thmMy3}.}
Let $B_m\subset\R^n$ be the ball of radius $m$ centred at the origin.
By Theorem~\ref{thmMy4},
\[
\lim_{m\to\infty}{\mathcal P}(B_m\cap\Z^n)
=\frac{\vol({\rm Ell}(B_1))}{\sigma_n},
\]
where $\sigma_k$ denotes the volume of the $k$-dimensional unit ball.
Thus the theorem reduces to the following statement.

\begin{proposition}\label{prLimBeta}
\[
{\rm Ell}(B_1)=\sqrt{\frac{\sigma_{n-1}}{\sigma_n}\beta_n}\,B_1.
\]
\end{proposition}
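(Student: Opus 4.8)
The plan is to exploit the rotational symmetry of the unit ball $B_1\subset\R^n$. Recall that ${\rm Ell}(B_1)$ is the ellipsoid whose support function is $h_{B_1}=\sqrt{F_{B_1}}$, where, since $\dim B_1=n$ and $\vol_n(B_1)=\sigma_n$,
\[
F_{B_1}(\xi)=\frac{1}{\sigma_n}\int_{B_1}\langle x,\xi\rangle^2\,dx,\qquad \xi\in\R^n.
\]
First I would note that $F_{B_1}$ is $O(n)$-invariant: for $g\in O(n)$ the substitution $x\mapsto g^{-1}x$ preserves both $B_1$ and Lebesgue measure, so $F_{B_1}(g\xi)=F_{B_1}(\xi)$. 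A nonnegative $O(n)$-invariant quadratic form on $\R^n$ is a scalar multiple of $|\xi|^2$, hence $F_{B_1}(\xi)=c\,|\xi|^2$ for some $c\ge 0$. Then $h_{B_1}(\xi)=\sqrt c\,|\xi|$, which is exactly the support function of the Euclidean ball of radius $\sqrt c$; therefore ${\rm Ell}(B_1)=\sqrt c\,B_1$, and it remains only to identify $c$.

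To compute $c$ I would evaluate $F_{B_1}$ on the coordinate unit vector $e_n$, so that $c=\frac{1}{\sigma_n}\int_{B_1}x_n^2\,dx$. Slicing $B_1$ by the hyperplanes $\{x_n=t\}$, $t\in[-1,1]$: each slice is an $(n-1)$-dimensional ball of radius $\sqrt{1-t^2}$, whose $(n-1)$-volume is $\sigma_{n-1}(1-t^2)^{(n-1)/2}$. By Fubini,
\[
\int_{B_1}x_n^2\,dx=\int_{-1}^1 t^2\,\sigma_{n-1}(1-t^2)^{(n-1)/2}\,dt=\sigma_{n-1}\,\beta_n,
\]
by the definition of $\beta_n$. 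Hence $c=\frac{\sigma_{n-1}}{\sigma_n}\,\beta_n$, which gives the asserted identity ${\rm Ell}(B_1)=\sqrt{\frac{\sigma_{n-1}}{\sigma_n}\beta_n}\,B_1$.

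I do not expect a genuine obstacle: once the symmetry reduction $F_{B_1}(\xi)=c|\xi|^2$ is in hand, everything reduces to the one-line slicing integral. The only points needing care are the normalizing factor $1/\vol_n(B_1)=1/\sigma_n$ in the definition of $F_\Delta$ and the elementary identification of a ball through its support function $\xi\mapsto\rho|\xi|$. Substituting this into the computation preceding Proposition~\ref{prLimBeta} finishes the proof of Theorem~\ref{thmMy3}: since the mixed volume of $n$ equal bodies equals their ordinary volume, $\lim_{m\to\infty}\mathcal P(B_m\cap\Z^n)=\vol({\rm Ell}(B_1))/\sigma_n=c^{n/2}=\bigl(\tfrac{\sigma_{n-1}}{\sigma_n}\beta_n\bigr)^{n/2}$.
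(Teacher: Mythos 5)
Your proof is correct and follows essentially the same route as the paper's: reduce by rotational invariance to $F_{B_1}(\xi)=c|\xi|^2$, evaluate at a coordinate unit vector, and compute the resulting integral by slicing $B_1$ into $(n-1)$-balls. The only cosmetic difference is that you spell out the $O(n)$-invariance argument explicitly, whereas the paper simply observes that $F_{B_1}$ depends only on $|\xi|$.
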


\begin{proof}
By definition, the support function of ${\rm Ell}(B_1)$ is $h_{B_1}=\sqrt{F_{B_1}}$, where
\[
F_{B_1}(\xi)=\frac{1}{\sigma_n}\int_{B_1}
(x_1\xi_1+\cdots+x_n\xi_n)^2\,dx_1\cdots dx_n.
\]
Since $F_{B_1}(\xi)$ depends only on $|\xi|$, the ellipsoid ${\rm Ell}(B_1)$ is a ball of
radius $\sqrt{F_{B_1}(\xi_0)}$ with $\xi_0=(1,0,\ldots,0)$.
Evaluating the integral yields
\[
F_{B_1}(\xi_0)
=\frac{\sigma_{n-1}}{\sigma_n}
\int_{-1}^1 x_1^2(1-x_1^2)^{\frac{n-1}{2}}\,dx_1
=\frac{\sigma_{n-1}}{\sigma_n}\beta_n,
\]
which proves the claim.
\end{proof}

\medskip
\noindent
\begin{corollary}
\[
\lim_{\inf(m_1,\ldots,m_n)\to\infty}
{\rm real}_n(B_{m_1}\cap\Z^n,\ldots,B_{m_n}\cap\Z^n)
=
\left(\frac{\sigma_{n-1}}{\sigma_n}\beta_n\right)^{\frac{n}{2}}.
\]
\end{corollary}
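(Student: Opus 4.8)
The plan is to read this off directly from Theorem~\ref{thmMy4} together with Proposition~\ref{prLimBeta}, since ${\rm real}_n$ is just another name for the probability $\mathcal P$ that a root of the random system is real. First I would check that the hypotheses of Theorem~\ref{thmMy4} are satisfied with the choice $\Delta_1=\cdots=\Delta_n=B_1$: each $B_1$ is full-dimensional in $\R^n$, so condition~{\rm(*)} holds vacuously, and the dilates $(B_1)_{m_i}=B_{m_i}$ give exactly the spectra $(\Lambda_i)_{m_i}=B_{m_i}\cap\Z^n$. Applying~\eqref{eqMain} then yields
\[
\lim_{\inf(m_1,\ldots,m_n)\to\infty}
{\rm real}_n\bigl(B_{m_1}\cap\Z^n,\ldots,B_{m_n}\cap\Z^n\bigr)
=\frac{\vol\bigl({\rm Ell}(B_1),\ldots,{\rm Ell}(B_1)\bigr)}{\vol(B_1,\ldots,B_1)}.
\]

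Next I would simplify the right-hand side using the normalization of mixed volume under which $\vol(A,\ldots,A)=\vol(A)$ for every convex body $A$: this collapses the numerator to $\vol({\rm Ell}(B_1))$ and the denominator to $\vol(B_1)=\sigma_n$. By Proposition~\ref{prLimBeta}, ${\rm Ell}(B_1)=\sqrt{\tfrac{\sigma_{n-1}}{\sigma_n}\beta_n}\,B_1$, so that $\vol({\rm Ell}(B_1))=\bigl(\tfrac{\sigma_{n-1}}{\sigma_n}\beta_n\bigr)^{n/2}\sigma_n$. Dividing by $\sigma_n$ gives precisely $\bigl(\tfrac{\sigma_{n-1}}{\sigma_n}\beta_n\bigr)^{n/2}$, which is the claimed limit.

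There is essentially no hard step: the statement merely repackages Theorem~\ref{thmMy3}/Theorem~\ref{thmMy4} for a system of $n$ balls of (possibly distinct) radii, and the only points requiring care are the bookkeeping of the mixed-volume normalization and the observation that, since all the Newton ellipsoids coincide with one and the same ball, the mixed volume in the numerator collapses to an ordinary volume independently of the individual $m_i$. If one prefers not to invoke Theorem~\ref{thmMy4} in full generality, an alternative route is to apply Theorem~\ref{thmMy2} for each fixed tuple $(m_1,\ldots,m_n)$, expressing $\mathcal P$ as the ratio $\vol({\rm Ell}((\Lambda_1)_{m_1}),\ldots)/\vol({\rm conv}((\Lambda_1)_{m_1}),\ldots)$, and then pass to the limit using Corollary~\ref{corLim}(2) for the numerator and Lemma~\ref{lmLimConv} for the denominator, together with the continuity of mixed volume in the Hausdorff topology; after that the computation proceeds exactly as above.
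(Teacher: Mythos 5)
Your proof is correct and takes essentially the same approach as the paper: both reduce to Proposition~\ref{prLimBeta} and the evaluation of a mixed volume of balls. The only difference is organizational — the paper cites Theorem~\ref{thmMy2} together with the identity $\vol(r_1B_1,\ldots,r_nB_1)=r_1\cdots r_n\,\sigma_n$, whereas your primary route goes through Theorem~\ref{thmMy4} with $\Delta_1=\cdots=\Delta_n=B_1$ and then uses $\vol(A,\ldots,A)=\vol(A)$; your closing paragraph already notes the thmMy2-based alternative, which is exactly the paper's route.
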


\begin{proof}
The mixed volume of $n$ balls of radii $r_1,\ldots,r_n$ equals
$r_1\cdots r_n\,\sigma_n$.
The statement now follows from Theorem~\ref{thmMy2} and
Proposition~\ref{prLimBeta}.
\end{proof}
\section{Kac type theorems in the context of groups}\label{group}
\subsection{Introduction}\label{groupIntro}
Let $\pi$ be a real finite-dimensional representation of a compact Lie group $K$
in a real vector space $E$, and let ${\rm Trig}(\pi)$ denote the vector space of
functions on $K$ spanned by the matrix elements of~$\pi$.

Denote by $K^\C$ the complexification of the group $K$.
Recall that $K^\C$ exists, is unique up to isomorphism, and is characterized by
the following properties:
\begin{enumerate}
\item $K^\C$ is a connected complex Lie group with
$\dim_\C K^\C=\dim K$;
\item the Lie algebra of $K^\C$ is the complexification of the Lie algebra of $K$;
\item $K$ is a maximal compact subgroup of $K^\C$.
\end{enumerate}
For example, $(\C\setminus\{0\})^n$ and $GL(n,\C)$ are complexifications of the
torus $S^n$ and of the unitary group $U(n)$, respectively.

The representation $\pi$ extends uniquely to a holomorphic representation of
$K^\C$ in the complexified space $E\otimes_\R\C$.
Consequently, any $\pi$-polynomial $f\in{\rm Trig}(\pi)$ admits a holomorphic
extension $f^\C$ to $K^\C$.
We refer to such extensions as \emph{real $\pi$-polynomials on the group $K^\C$}.
The space of real $\pi$-polynomials is a real vector space of dimension
$\dim{\rm Trig}(\pi)$.
A zero of a $\pi$-polynomial $f$ lying in $K$ is called a \emph{real root} of $f$,
or equivalently, a real root of the corresponding real $\pi$-polynomial $f^\C$.
\par\smallskip
In the case of Laurent polynomials, the notion of a $\pi$-polynomial specializes
as follows.
Let $\pi_0$ be the trivial representation of the torus $S^n$ in $\R$.
For a nonzero vector $m\in\Z^n$, define a real representation $\pi_m$ of $S^n$ by
\[
\pi_m(\theta)=
\begin{pmatrix}
\cos(m,\theta) & \sin(m,\theta)\\
-\sin(m,\theta) & \cos(m,\theta)
\end{pmatrix}.
\]
For each unordered pair $(m,-m)$, set $\pi_{(m,-m)}=\pi_m$.
Since the real representations $\pi_m$ and $\pi_{-m}$ are equivalent, this
notation is well defined.

Let $\Lambda\subset\Z^n$ be a finite centrally symmetric set, and denote by
$\Lambda'$ the set of unordered pairs $(m,-m)$ with $m\in\Lambda$.
Consider the real representation of the torus $S^n$ given by
\begin{equation}\label{eqpi1}
\pi_\Lambda=\bigoplus_{(m,-m)\in\Lambda'}\pi_{(m,-m)}.
\end{equation}
Then (cf.\ the definitions in the beginning of \S\ref{Laur3}) the following hold:
\begin{enumerate}
\item $\pi_\Lambda$-polynomials on $S^n$ are precisely the trigonometric polynomials
\[
f(e^{i\theta})=
\sum_{m\in\Lambda}
\bigl(\alpha_m\cos(m,\theta)+\beta_m\sin(m,\theta)\bigr),
\qquad \alpha_m,\beta_m\in\R;
\]
\item the space of real $\pi_\Lambda$-polynomials coincides with the space of
real Laurent polynomials with spectrum $\Lambda$.
\end{enumerate}
In \S\ref{group3} and \S\ref{group4} we formulate analogues of
Theorems~\ref{thmMy1}, \ref{thmMy2}, and~\ref{thmMy3} for $\pi$-polynomials
associated with representations of a compact Lie group $K$.
Their proofs follow the same general scheme as in \S\ref{Laur3};
accordingly, we provide only a sketch here.
For a complete exposition, see~\cite{K25}.
\subsection{Theorem BKK for reductive groups}\label{group2}
Let $K^\C$ be the complexification of a compact group $K$.
It is a complex, connected, $n$-dimensional reductive Lie group,
such that $K$ is a maximal compact subgroup of $K^\C$.
Let ${\rm Trig}^\C(\mu)$ denote the space of functions on the group $K^\C$,
consisting of linear combinations of the matrix elements of the complex representation $\mu$ of the group $K^\C$.
Let $\mu_1,\ldots,\mu_n$ be finite-dimensional holomorphic representations of $K^\C$.
To any system of $n$ nonzero functions $f_i\in{\rm Trig}^\C(\mu_i)$
we associate the point
$\iota(f_1,\ldots,f_n)=(\C f_1)\times\ldots\times(\C f_n)\in\P_{1,\C}\times\ldots\times\P_{n,\C}$,
where $\P_{i,\C}$ is the complex projective space whose points are
one-dimensional subspaces of ${\rm Trig}^\C(\mu_i)$.
Next, we use the following standard statement from algebraic geometry.
\begin{proposition}\label{prConstAl}
There exist a number $N(\mu_1,\ldots,\mu_n)$ and an algebraic hypersurface $H$ in $\P_{1,\C}\times\ldots\times\P_{n,\C}$ such that the following holds.
For any $n$ functions $f_i\in{\rm Trig}(\Pi_{i,\C})$ with $\iota(f_1,\ldots,f_n)\not\in H$,
the number of their common zeros is $N(\mu_1,\ldots,\mu_n)$.
\end{proposition}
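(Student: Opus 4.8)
The plan is to deduce Proposition~\ref{prConstAl} from the classical theory of resultants and the theory of highest-weight representations, following the same pattern that makes the BKK theorem work for tori. First I would fix, for each $i$, the holomorphic representation $\mu_i$ of $K^\C$ and the finite-dimensional space $W_i={\rm Trig}^\C(\mu_i)$ of its matrix coefficients; since $K^\C$ is reductive and connected, $\mu_i$ decomposes into irreducibles, and the span of matrix coefficients of $\mu_i$ is a $(K^\C\times K^\C)$-submodule of the coordinate ring $\C[K^\C]$. The key geometric object is the morphism $\Phi_i\colon K^\C\to \P(W_i^*)=\P_{i,\C}^{\vee}$ sending $g$ to the evaluation functional $f\mapsto f(g)$; equivalently one uses the embedding into $\P(W_i)$ given by the matrix coefficients. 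The closure $X_i=\overline{\Phi_i(K^\C)}$ is an irreducible projective variety, and a function $f_i\in W_i$ vanishes at $g$ precisely when the hyperplane $\{f_i=0\}\subset\P(W_i^*)$ contains $\Phi_i(g)$. Thus a common zero of the system $f_1=\dots=f_n=0$ in $K^\C$ corresponds to a point of $K^\C$ whose images under $\Phi_1,\dots,\Phi_n$ lie on the respective hyperplanes $H_{f_1},\dots,H_{f_n}$.

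Next I would invoke the general principle of \emph{conservation of number} (or, equivalently, the constructibility of fibre dimension plus upper semicontinuity, together with flatness on a dense open set). Consider the incidence variety
\[
Z=\bigl\{\,(g,f_1,\dots,f_n)\in K^\C\times \P_{1,\C}\times\dots\times\P_{n,\C}\ :\ f_i(g)=0\ \text{for all }i\,\bigr\},
\]
with its projection $p\colon Z\to \P_{1,\C}\times\dots\times\P_{n,\C}$. Each $Z$ is a quasi-projective variety, $p$ is a projective-over-affine morphism once one compactifies $K^\C$ suitably (e.g.\ inside $\prod_i\P(W_i^*)$ via $(\Phi_1,\dots,\Phi_n)$), and the generic fibre of $p$ is finite of some cardinality $N(\mu_1,\dots,\mu_n)$ equal to the degree of the generic zero-dimensional intersection. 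The locus in $\P_{1,\C}\times\dots\times\P_{n,\C}$ over which the fibre of $p$ is either positive-dimensional or has cardinality different from the generic one is a proper closed subset; I would then take $H$ to be (a hypersurface containing) this closed subset. Here one uses that the base is a product of projective spaces, so that proper closed subsets are cut out by multihomogeneous forms, hence contained in hypersurfaces — which is exactly the form of the statement required. The finiteness of the generic fibre is where the structure of $K^\C$ enters: for the system to have finitely many solutions generically, it suffices that the images $X_i$ are not all contained in a common hyperplane configuration forcing positive-dimensional intersection; this follows because the matrix coefficients of a nontrivial representation separate points of $K^\C$ and the morphisms $\Phi_i$ are finite onto their images after the standard normalisation, so a generic choice of $n$ hyperplane sections of an $n$-dimensional variety is zero-dimensional by Bertini-type dimension count.

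The main obstacle I expect is not the conservation-of-number argument itself, which is standard, but rather verifying that the generic fibre is genuinely \emph{finite} and nonempty — i.e.\ that $N(\mu_1,\dots,\mu_n)$ is well defined and positive — in the presence of possibly non-faithful or reducible representations $\mu_i$. If some $\mu_i$ has a kernel containing a positive-dimensional subgroup, then $\Phi_i$ factors through a proper quotient and the corresponding hyperplane section never cuts down dimension, so the system has infinitely many zeros for \emph{every} choice of $f_i$; in that degenerate case one should either exclude such $\mu_i$ by hypothesis or interpret $N=\infty$ and $H=\varnothing$. I would therefore either add the standing assumption that each $\mu_i$ is such that $\bigcap_i\ker\mu_i$ is finite (automatically true in the Laurent case by \eqref{eqpi1}), or phrase the proposition so that $N(\mu_1,\dots,\mu_n)$ is allowed to be infinite and $H$ empty. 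Modulo this bookkeeping, the proof is a direct application of the constructibility of the function $\bigl(f_1,\dots,f_n\bigr)\mapsto \#\{g\in K^\C: f_i(g)=0\ \forall i\}$ on the locus where it is finite, combined with the fact that a proper constructible (indeed closed) subset of a product of projective spaces lies in a hypersurface.
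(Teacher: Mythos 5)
The paper offers no proof of Proposition~\ref{prConstAl} -- it is cited as ``a standard statement from algebraic geometry'' -- so there is nothing to compare your argument against; you are simply supplying a proof the author omitted. Your outline (form the incidence variety $Z\subset K^\C\times\P_{1,\C}\times\cdots\times\P_{n,\C}$, compactify so that the projection $p$ to the parameter space becomes proper, invoke conservation of number / generic finiteness, and enlarge the resulting bad locus -- a proper closed subset of a product of projective spaces -- to a hypersurface cut out by a multihomogeneous form) is exactly the standard mechanism that proves such statements and is essentially correct. One small point to tighten: after compactifying $Z$ to $\bar Z$, you should note that $\bar Z\setminus Z$ has strictly smaller dimension than $\bar Z$, so its image under $\bar p$ is again a proper closed subset; this is what prevents the count in $K^\C$ from dropping when zeros ``escape to infinity.''

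The one place where you go astray is the proposed non-degeneracy hypothesis. You worry that if $\bigcap_i\ker\mu_i$ has positive dimension, the generic system would have infinitely many zeros and $N$ would be ill-defined, so that one should either exclude such $\mu_i$ or allow $N=\infty$. This concern never materializes, and the proposition is correct as stated with $N$ always a finite (possibly zero) integer. The key observation is that $\mu_i(g)$ is an invertible matrix for every $g\in K^\C$, so not all matrix coefficients of $\mu_i$ vanish at $g$; hence the evaluation functional on ${\rm Trig}^\C(\mu_i)$ is never zero, and the condition $f_i(g)=0$ always cuts out a genuine hyperplane in $\P_{i,\C}$. Consequently the fibre of $q\colon Z\to K^\C$ over any $g$ has dimension exactly $\dim\bigl(\prod_i\P_{i,\C}\bigr)-n$, so $\dim Z\le\dim\bigl(\prod_i\P_{i,\C}\bigr)$. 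Therefore $p\colon Z\to\prod_i\P_{i,\C}$ either is dominant -- in which case the dimension count forces the generic fibre to be finite of constant cardinality -- or is not dominant, in which case its image is a proper closed subset, one takes $H$ to contain this image, and $N=0$. In particular, your degenerate example (a representation factoring through a proper quotient, so that all $f_i$ are constant on cosets of a positive-dimensional subgroup) produces an over-determined system on a quotient of dimension $<n$ and falls into the second branch with $N=0$; it does not produce $N=\infty$.
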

Below
we give a geometric formula for $N(\mu_1,\ldots,\mu_n)$; see Theorem \ref{thmRed}.
This formula differs from its versions in \cite{K87,Br}
by one application of the Weyl integration formula; see \cite{K25}.

Let $T^k$, $\mathfrak t$, $\mathfrak t^*$, $W^*$, and $\mathfrak C^*$
denote a maximal torus in $K$,
its Lie algebra, the space of linear functionals on $\mathfrak t$,
the Weyl group acting in the space $\mathfrak t^*$,
and the  Weyl chamber in $\mathfrak t^*$,
respectively.

Consider the decomposition
$$
\mu=\bigoplus_{\lambda\in\Lambda\subset\Z^k\cap\mathfrak C^*,\:0<m_\lambda\in\Z} m_\lambda\:\mu_\lambda
$$
of a representation $\mu$ into a sum of irreducible representations $\mu_\lambda$
with highest weights $\lambda$ and multiplicities $m_\lambda$.
\begin{definition}\label{dfweightedpolyhedron}
(1) We denote the Weyl group orbit of a point $\lambda\in\mathfrak t^*$ by $W^*(\lambda)$.
The compact convex set
$$
\Delta(\mu)={\rm conv}\left(\bigcup_{\lambda\in\Lambda}W^*(\lambda)\right)
$$
is called the \emph{weight polytope of the representation} $\mu$.

(2)
Let $\mathfrak N(\mu)\subset \mathfrak k^*$ denote the union of coadjoint orbits of $K$
intersecting the weight polytope $\Delta(\mu)$
(we identify $\mathfrak t^*$ with the set of fixed points of the coadjoint action of $T^k$ on $\mathfrak k^*$).
In what follows, we call $\mathfrak N(\mu)$ the \emph{Newton body of the representation} $\mu$.
\end{definition}
\begin{corollary}\label{corNewtTor}
{\rm(1)} Let $\mu_T$ be a restriction of the representation $\mu$ to the maximal torus $T^k$ in $K$,
and let $\Lambda_T\subset \mathfrak t^*$ be the set of weights of the torus representation $\mu_T$.
Then $\Delta(\mu)={\rm conv}(\Lambda_T)$.

{\rm (2)}
The set $\mathfrak N(\mu)$ is a convex compact set in the space $\mathfrak k^*$,
consisting of linear functionals on the Lie algebra $\mathfrak k$ of the group $K$.

{\rm (3)}
$\Delta(\mu)=\pi(\mathfrak N(\mu))$,
where $\pi$ is the projection $\mathfrak k^*\to \mathfrak t^*$.

{\rm (4)}
$\mathfrak N(\mu\otimes \pi)=\mathfrak N(\mu)+ \mathfrak N(\pi)$.
\end{corollary}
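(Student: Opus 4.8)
The plan is to reduce everything to the definition of $\Delta(\mu)$ and $\mathfrak N(\mu)$ together with two standard facts from the representation theory of compact groups: the description of the weights of a representation via its highest weights and the Weyl group, and the convexity theorem (Kostant) identifying the projection of a coadjoint orbit to $\mathfrak t^*$ with the convex hull of the Weyl orbit of the corresponding weight. I would treat the four statements in the order (1), (3), (2), (4), since (2) and (4) are cleanest once (1) and (3) are in hand.

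\emph{Statement (1).} The weights of $\mu_T$ are, by complete reducibility, the union over $\lambda\in\Lambda$ (with multiplicity $m_\lambda$, which is irrelevant for the weight \emph{set}) of the weights of the irreducible $\mu_\lambda$. A standard fact is that the set of weights of the irreducible representation with highest weight $\lambda$ is contained in $\lambda$ minus the positive root cone, is $W$-invariant, and has convex hull exactly equal to $\mathrm{conv}(W^*(\lambda))$. Taking the union over $\lambda\in\Lambda$ and then the convex hull, I get
\[
\mathrm{conv}(\Lambda_T)=\mathrm{conv}\Bigl(\bigcup_{\lambda\in\Lambda}\{\text{weights of }\mu_\lambda\}\Bigr)=\mathrm{conv}\Bigl(\bigcup_{\lambda\in\Lambda}\mathrm{conv}(W^*(\lambda))\Bigr)=\mathrm{conv}\Bigl(\bigcup_{\lambda\in\Lambda}W^*(\lambda)\Bigr)=\Delta(\mu),
\]
where in the penultimate step I used that the convex hull of a union of convex hulls is the convex hull of the union of the underlying sets.

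\emph{Statement (3).} Write $\pi\colon\mathfrak k^*\to\mathfrak t^*$ for the restriction map dual to $\mathfrak t\hookrightarrow\mathfrak k$. By definition $\mathfrak N(\mu)=\bigcup_{\xi\in\Delta(\mu)\cap\mathfrak t^*}\mathcal O_\xi$ where $\mathcal O_\xi$ is the coadjoint $K$-orbit through $\xi$; but since $\Delta(\mu)$ is a polytope with vertices in the $W$-orbits $W^*(\lambda)$ and is $W$-invariant, it suffices to take the union of coadjoint orbits through the weights $\lambda\in\Lambda$ themselves — the convex combinations get absorbed after projecting and taking convex hulls. Kostant's convexity theorem gives $\pi(\mathcal O_\lambda)=\mathrm{conv}(W^*(\lambda))$. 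Hence $\pi(\mathfrak N(\mu))=\bigcup_\lambda\mathrm{conv}(W^*(\lambda))$ has convex hull $\Delta(\mu)$; the point to check is that $\pi(\mathfrak N(\mu))$ is already convex, so that it equals $\Delta(\mu)$ rather than merely having it as convex hull. This follows because $\mathfrak N(\mu)$ is itself convex (statement (2)) and linear images of convex sets are convex. So I would prove (2) before concluding (3), or at least prove the convexity of $\mathfrak N(\mu)$ first.

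\emph{Statement (2).} That $\mathfrak N(\mu)$ is compact is clear: it is the image of the compact set $K\times(\Delta(\mu)\cap\mathfrak t^*)$ under the continuous coadjoint action map. Convexity is the real content, and this is the step I expect to be the main obstacle. The cleanest route is to invoke the known description of $\mathfrak N(\mu)$ as the moment polytope image, or equivalently to note that $\mathfrak N(\mu)$ equals the union of coadjoint orbits whose Kirillov–Kostant–Souriau symplectic reduction data lie in $\Delta(\mu)$; convexity of such "orbit unions over a $W$-invariant convex polytope in $\mathfrak t^*$" is a standard consequence of the convexity of coadjoint orbits' projections together with the fact that $\Delta(\mu)$ is the convex hull of the dominant weights' Weyl orbits. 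Concretely: $\eta\in\mathfrak k^*$ lies in $\mathfrak N(\mu)$ iff the unique dominant point of $W^*\bigl(\pi(\mathrm{Ad}^*_k\eta)\bigr)$ lies in $\Delta(\mu)\cap\mathfrak C^*$ for some (equivalently every) $k$ conjugating $\eta$'s orbit into $\mathfrak t^*$; one then checks this condition is preserved under convex combinations using that $\Delta(\mu)\cap\mathfrak C^*$ is convex and that the map sending a coadjoint orbit to its intersection with the closed Weyl chamber is "concave" in the appropriate sense. Alternatively, and more economically, I would simply cite the relevant statement from \cite{K87} or \cite{K25}, since this convexity is exactly the classical fact underlying the Newton-body formalism for reductive groups.

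\emph{Statement (4).} Finally, $\mathfrak N(\mu\otimes\nu)=\mathfrak N(\mu)+\mathfrak N(\nu)$. On the torus side, the weights of $\mu_T\otimes\nu_T$ are the sumset $\Lambda_T(\mu)+\Lambda_T(\nu)$, so by (1), $\Delta(\mu\otimes\nu)=\mathrm{conv}(\Lambda_T(\mu)+\Lambda_T(\nu))=\mathrm{conv}(\Lambda_T(\mu))+\mathrm{conv}(\Lambda_T(\nu))=\Delta(\mu)+\Delta(\nu)$, using that the convex hull of a Minkowski sum of finite sets is the Minkowski sum of their convex hulls. It then remains to pass from this additivity of weight polytopes to additivity of Newton bodies: one shows that $\mathfrak N(-)$ is determined by $\Delta(-)$ via $\mathfrak N(\mu)=\bigcup\{\mathcal O_\xi:\xi\in\mathfrak C^*,\ \xi\in\Delta(\mu)\}$ and that this operation is Minkowski-additive on $W$-invariant polytopes — again a consequence of Kostant convexity, since the support function of $\mathfrak N(\mu)$ in a direction $X\in\mathfrak k$ equals $\max_{\xi\in\Delta(\mu)\cap\mathfrak C^*}\langle\xi,X_+\rangle$ where $X_+$ is the dominant representative of the $\mathrm{Ad}$-orbit of $X$ meeting $\mathfrak t$, and support functions add under Minkowski sum. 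I expect (4) to follow formally once the support-function description of $\mathfrak N(\mu)$ is recorded.
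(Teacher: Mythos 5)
Your proposal is correct and follows the same route as the paper, which likewise reduces the corollary to highest weight theory, Kostant's convexity theorem, and the additivity of weight sets under tensor product; the difference is only in the level of detail you supply, which is appropriate given how terse the paper's one-paragraph proof is. One small simplification worth noting: statement (3) does not actually require (2). Since $\Delta(\mu)\subset\mathfrak t^*$ is $W^*$-invariant and convex, for every $\xi\in\Delta(\mu)$ Kostant's theorem gives $\pi(\mathcal O_\xi)=\mathrm{conv}(W^*(\xi))\subset\Delta(\mu)$, so $\pi(\mathfrak N(\mu))\subset\Delta(\mu)$; and since $\xi\in\mathrm{conv}(W^*(\xi))$ trivially, every $\xi\in\Delta(\mu)$ lies in $\pi(\mathcal O_\xi)\subset\pi(\mathfrak N(\mu))$, giving the reverse inclusion directly and avoiding the detour through the convexity of $\pi(\mathfrak N(\mu))$.
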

\begin{proof}
Statement (1) follows from the theory of highest weights.
It is known that for any $\zeta\in\mathfrak t^*$,
the projection $\pi$ of the coadjoint orbit ${\rm Ad}(K)(\zeta)$ onto $\mathfrak t^*$
coincides with the convex hull of the Weyl group orbit $W^*(\zeta)$; see \cite{K}.
From this, statements (2) and (3) follow.
Statement (4) follows from the standard properties of representation weights.
\end{proof}
\begin{theorem}\label{thmRed}
Let

$\bullet$\
$\tau$ be the metric in $\mathfrak k^*$,
invariant under the coadjoint action of $K$
and such that the area of the basic parallelepiped of the character lattice in $\mathfrak t^*$ is equal to one

$\bullet$\ $\rho=\frac{1}{2}\sum_{\beta\in R^+}\beta$,
where $R^+$ is the set of positive roots corresponding to the Weyl chamber $\mathfrak C^*$

$\bullet$\ $P$ be the polynomial on $\mathfrak k$,
defined as $P(\lambda)=\prod_{\beta\in R^+}(\lambda,\beta)$

Then
$$
N(\mu_1,\ldots,\mu_n)=\frac{n!}{P^2(\rho)}\:\vol_\tau\left(\mathfrak N(\mu_1),\ldots,\mathfrak N(\mu_n)\right),
$$
\end{theorem}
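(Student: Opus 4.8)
\textbf{Proof proposal for Theorem~\ref{thmRed}.}
The plan is to transport the problem from the reductive group $K^\C$ to its maximal compact subgroup $K$, apply the general integral-geometric formula~\eqref{eqAK1} there, and then reduce the resulting integral over $K$ to an integral over the maximal torus $T^k$ by means of the Weyl integration formula. First I would observe that, exactly as in \S\ref{Laur42}, the number $N(\mu_1,\ldots,\mu_n)$ from Proposition~\ref{prConstAl} coincides with the expected number of \emph{real} roots of a random system of real $\pi$-polynomials: the embedding $\iota$ of the product of projectivizations of the real spaces ${\rm Trig}(\pi_i)$ into $\P_{1,\C}\times\cdots\times\P_{n,\C}$ has Zariski-dense image, so $\iota^{-1}(H)$ has measure zero, and the generic complex root count is attained on a full-measure set of real systems. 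Hence it suffices to compute $\mathfrak M(V_1,\ldots,V_n)$ with $X=K$, $V_i={\rm Trig}(\pi_i)$, and the $K$-biinvariant metric normalized so that $\vol(K)$ matches the normalization of $\tau$.

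Next I would apply Corollary~\ref{corMain1}: since $K$ acts transitively on itself and the $V_i$ and the metric are $K$-invariant, it is enough to evaluate the mixed volume of the $V_i$-ellipsoids ${\rm Ell}_{V_i}(\mathbf 1)$ in $T^*_\mathbf 1 K=\mathfrak k^*$ at the identity. By Lemma~\ref{lmh_V}, the square of the support function of ${\rm Ell}_{V_i}(\mathbf 1)$ is the pullback under $d\Theta_i^*$ of $|v|^2$ on $V_i$, and by Lemma~\ref{lmLin} applied with $F_\mathbf 1=\sum f_j^{(i)}(\mathbf 1)f_j^{(i)}$ one computes this pullback explicitly: evaluated on $\xi\in\mathfrak k$ it equals $\frac{1}{\dim V_i}\sum_j (df_j^{(i)})_\mathbf 1(\xi)^2$, the averaged sum of squares of the differentials of an orthonormal basis of matrix elements. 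Choosing the orthonormal basis adapted to the irreducible decomposition $\mu_i=\bigoplus m_\lambda\mu_\lambda$ and using the Schur orthogonality relations, this quadratic form on $\mathfrak k$ is $K$-invariant, hence determined by its restriction to $\mathfrak t$, where it becomes (up to the overall normalizing constant) $\sum_\lambda m_\lambda \dim\mu_\lambda\sum_{w\in W^*}(w\lambda)(\xi)^2$ — i.e.\ the second-moment form of the uniform measure on the weight multiset. This identifies ${\rm Ell}_{V_i}(\mathbf 1)$, up to normalization, with an ellipsoid canonically attached to $\Delta(\mu_i)$; one then checks that the intersection of the relevant coadjoint-orbit union $\mathfrak N(\mu_i)$ with this ellipsoid data produces precisely the Newton body, so that the mixed volume $\vol_\tau({\rm Ell}_{V_1}(\mathbf 1),\ldots,{\rm Ell}_{V_n}(\mathbf 1))$ is a universal multiple of $\vol_\tau(\mathfrak N(\mu_1),\ldots,\mathfrak N(\mu_n))$.

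The universal constant is exactly where the Weyl integration formula enters, and this is the step I expect to be the main obstacle. The subtlety is bookkeeping of normalizations: the factor $(2\pi)^{-n}$ and $\vol(K)$ in Corollary~\ref{corMain1}, the factor $\dim V_i=\sum_\lambda m_\lambda\dim^2\mu_\lambda$ coming from Lemma~\ref{lmLin}(3), the Weyl dimension formula $\dim\mu_\lambda=P(\lambda+\rho)/P(\rho)$, and the Jacobian $|P(\xi)|^2$ (up to constants) of the change of variables from $\mathfrak k$ to $\mathfrak t$ in the Weyl integration formula all have to be combined. The claim is that, after writing the integral over $\mathfrak k^*$ defining the mixed volume of the $\mathfrak N(\mu_i)$ as an integral over $\mathfrak t^*$ weighted by $P^2$, and comparing with the ellipsoid-based formula from~\eqref{eqAK1}, every factor cancels except $n!/P^2(\rho)$. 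Concretely I would verify the constant by specializing to the torus case $K=T^k=S^n$, where $\mathfrak N(\mu_\Lambda)={\rm conv}(\Lambda)$ is replaced by ${\rm Ell}(\Lambda)$, $P\equiv 1$, $\rho=0$, and Theorem~\ref{thmRed} must reduce to Theorem~\ref{thmMy1}; this fixes all numerical factors and, by the $K$-invariance reductions above, the general case follows by the same chain of identities. For the full details of the constant-chasing and the Weyl-integration step, we refer to~\cite{K25}.
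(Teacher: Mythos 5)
The paper itself does not prove Theorem~\ref{thmRed}: it states the result and refers to \cite{K87,Br,K25}, noting only that the present formulation ``differs from its versions in \cite{K87,Br} by one application of the Weyl integration formula.'' So there is no internal proof to compare against, but the proposal can still be assessed on its own terms, and it contains a genuine error in its very first step. You assert that $N(\mu_1,\ldots,\mu_n)$ ``coincides with the expected number of \emph{real} roots of a random system of real $\pi$-polynomials.'' This is false, and the confusion is exactly the one the paper is organized to separate. The Zariski-density argument from \S\ref{Laur42} shows that for a full-measure set of \emph{real} systems the number of \emph{complex} roots in $K^\C$ equals the generic complex count $N$; it says nothing about how many of those roots lie in $K$. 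The expected number of real roots is $\mathfrak M(\pi_1,\ldots,\pi_n)$ and is computed in Theorem~\ref{thmKacGr} as $\frac{n!}{(2\pi)^n}\vol_\tau({\rm Ell}(\pi_1),\ldots,{\rm Ell}(\pi_n))$; this is strictly smaller than $N$ in general, the ratio being the probability $\mathcal P(\pi)$. Indeed, in the torus case Theorem~\ref{thmMy1} gives $n!\,\vol({\rm Ell}(\Lambda_1),\ldots,{\rm Ell}(\Lambda_n))$ for the expected real count, while the complex count $N$ is $n!\,\vol({\rm conv}(\Lambda_1),\ldots,{\rm conv}(\Lambda_n))$, and ${\rm Ell}(\Lambda_i)\subsetneq{\rm conv}(\Lambda_i)$. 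Your concluding sanity check perpetuates the same confusion: you say ``$\mathfrak N(\mu_\Lambda)={\rm conv}(\Lambda)$ is replaced by ${\rm Ell}(\Lambda)$'' and that Theorem~\ref{thmRed} should reduce to Theorem~\ref{thmMy1} for $K=T^n$, whereas the paper's remark after Theorem~\ref{thmRed} explicitly says it reduces to the BKK theorem \ref{thmBKK}, with $\mathfrak N(\mu_\Lambda)={\rm conv}(\Lambda)$ (cf.\ Corollary~\ref{corNewtTor}(1)), $P\equiv 1$, $\rho=0$.

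Because of this conflation, the machinery you invoke (Corollary~\ref{corMain1} with $X=K$, Lemma~\ref{lmLin}, Schur orthogonality, the $V_i$-ellipsoids) is the machinery for Theorem~\ref{thmKacGr}, not for Theorem~\ref{thmRed}, and it produces mixed volumes of the \emph{Newton ellipsoids} ${\rm Ell}(\pi_i)\subset\mathfrak k^*$ rather than of the \emph{Newton bodies} $\mathfrak N(\mu_i)\subset\mathfrak k^*$. These are different convex bodies: for simple $K$ the ellipsoid is a ball (Corollary~\ref{corEllNGr}(3)), while the Newton body depends nontrivially on the weight polytope $\Delta(\mu_i)$ and is what carries the factor $1/P^2(\rho)$ through the Weyl integration formula. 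The actual proof of Theorem~\ref{thmRed} is algebro-geometric: one starts from the degree formula for $N(\mu_1,\ldots,\mu_n)$ expressed as an integral over $\mathfrak t^*$ with weight $P^2$ (as in \cite{K87,Br}), and then uses the Weyl integration formula in the reverse direction—to repackage that $\mathfrak t^*$-integral as a $\tau$-mixed volume of the coadjoint-orbit bodies $\mathfrak N(\mu_i)$ in $\mathfrak k^*$. No expectation over random real systems enters: $N$ is a deterministic intersection number on the complex group. The Weyl integration formula is indeed the key tool, but it is used for a change of variables between $\mathfrak t^*$ and $\mathfrak k^*$ in a volume identity, not to reduce a $K$-integral in formula~\eqref{eqAK1} to a $T^k$-integral as you describe.
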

\begin{remark}
If $K=T^n$, then Theorems \ref{thmRed} and \ref{thmBKK} coinside.
\end{remark}
\subsection{Newton ellipsoid of group representation}\label{group3}
For a real representation $\pi$ of a group $K$,
we denote by ${\rm Trig}(\pi)$ the real vector space
consisting of linear combinations of matrix elements of the representation.
Next we regard ${\rm Trig}(\pi)$ as a subspace in $L_2(K)$,
that is, a subspace of the space of functions on $K$ endowed with the scalar product
$(\phi, \psi)=\int\nolimits_K \phi \psi \:dg$,
where $g$ is an invariant Riemannian metric on $K$
such that the Riemannian volume of $K$ is equal to one.
In what follows, we use the notation from \textsection\ref{Laur41}
with $X=K$ and $V_1=\ldots=V_n={\rm Trig}(\pi)$.
\begin{definition}\label{dfEllNGr}
We call the ellipsoid ${\rm Ell}(\pi)={\rm Ell}_{{\rm Trig}(\pi)}(\mathbf 1)$,
where $\mathbf 1$ is the identity element of $K$,
the Newton ellipsoid of the representation of $\pi$.
\end{definition}
Let us list some immediate consequences of Definition \ref{dfEllNGr}.
\begin{corollary}\label{corEllNGr}
{\rm(1)}
The Newton ellipsoid is an ellipsoid in the space $\mathfrak k^*$,
consisting of linear functionals on the Lie algebra $\mathfrak k$ of the group $K$.

{\rm(2)}
The ellipsoid ${\rm Ell}(\pi)$ is invariant under the coadjoint action of $K$.

{\rm(3)}
If the group $K$ is simple,
then the ellipsoid ${\rm Ell}(\pi)$ is a ball in $\mathfrak k^*$ for any coadjointly invariant metric in $\mathfrak k^*$.
(This property of the ellipsoid ${\rm Ell}(\pi)$ is used in calculating the asymptotics of the number of real roots in Theorem \ref{thmAntiKac2}.)
\end{corollary}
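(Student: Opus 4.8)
The plan is to handle the three assertions in order: (1) is immediate from the definition of the $V$-ellipsoid; (2) follows from the equivariance of the construction together with the bi-invariance of the space ${\rm Trig}(\pi)$; and (3) reduces to the classical fact that a compact simple Lie algebra carries a unique $\mathrm{Ad}$-invariant quadratic form up to scaling.

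For (1): first observe that $V={\rm Trig}(\pi)$ satisfies the non-vanishing hypothesis \eqref{eqne01} on $X=K$, since $\pi(x)$ is invertible, so some matrix element of $\pi$ is nonzero at $x$. Hence Definition~\ref{dfEllV} applies with $X=K$, $V={\rm Trig}(\pi)$ and produces a centrally symmetric (possibly degenerate) ellipsoid ${\rm Ell}_{{\rm Trig}(\pi)}(\mathbf 1)$ in the cotangent space $T^*_{\mathbf 1}K$; under the canonical identification $T^*_{\mathbf 1}K=\mathfrak k^*$ this is an ellipsoid in $\mathfrak k^*$, i.e.\ a set of linear functionals on $\mathfrak k$. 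That is the entire content of (1).

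For (2): the structural inputs are that $V={\rm Trig}(\pi)$ is invariant under left translations, right translations, and conjugations on $K$ (the functions $g\mapsto\pi_{ij}(agb)$ and $g\mapsto\pi_{ij}(hgh^{-1})$ are again linear combinations of the matrix elements $\pi_{kl}$), and that the metric $g$, taken bi-invariant — such $g$ exists since $K$ is compact — and normalized to $\mathrm{vol}(K)=1$, is preserved by every conjugation $c_h\colon g\mapsto hgh^{-1}$. Thus $c_h$ is a diffeomorphism of $X=K$ which fixes $\mathbf 1$, preserves $g$, acts on $V$ by an orthogonal operator $\rho_h$ (pullback of functions), and whose differential at $\mathbf 1$ is $(dc_h)_{\mathbf 1}=\mathrm{Ad}(h)$ on $T_{\mathbf 1}K=\mathfrak k$. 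Consequently the evaluation map and its normalization intertwine these actions, $\bar\Theta\circ c_h=\sigma_h\circ\bar\Theta$, where $\sigma_h$ is the orthogonal operator on $V^*$ induced by $\rho_h$. Differentiating at the fixed point $\mathbf 1$ gives $D_{\mathbf 1}\circ\mathrm{Ad}(h)=\sigma_h\circ D_{\mathbf 1}$, and passing to adjoints yields $\mathrm{Ad}(h)^{*}\circ D_{\mathbf 1}^{*}=D_{\mathbf 1}^{*}\circ\sigma_h^{*}$; since $\sigma_h^{*}$ is orthogonal it preserves the unit ball $B\subset V$, whence $\mathrm{Ad}(h)^{*}\bigl({\rm Ell}(\pi)\bigr)=\mathrm{Ad}(h)^{*}\bigl(D_{\mathbf 1}^{*}(B)\bigr)=D_{\mathbf 1}^{*}(B)={\rm Ell}(\pi)$. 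Equivalently, by Lemma~\ref{lmLin} (as in \S\ref{Laur41}) the square of the support function of ${\rm Ell}(\pi)$ is the quadratic form $\xi\mapsto\frac1N\sum_i\bigl((df_i)_{\mathbf 1}\xi\bigr)^2$ on $\mathfrak k$ for any orthonormal basis $\{f_i\}$ of $V$; replacing $\{f_i\}$ by the orthonormal basis $\{f_i\circ c_h\}$ shows this form is $\mathrm{Ad}$-invariant. Either way, ${\rm Ell}(\pi)$ is coadjoint-invariant.

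For (3): assume $K$ is (compact, connected) simple, so $\mathfrak k$ is a compact simple real Lie algebra; then $\mathfrak k_{\C}$ is simple (a compact simple real Lie algebra is not the realification of a complex simple one), so the adjoint representation is absolutely irreducible and the space of $\mathrm{Ad}$-invariant symmetric bilinear forms on $\mathfrak k$ — equivalently, of coadjoint-invariant inner products on $\mathfrak k^*$ — is one-dimensional, spanned for instance by $-$Killing. By (2) the quadratic form $h_{{\rm Ell}(\pi)}^{2}$ on $\mathfrak k$ is $\mathrm{Ad}$-invariant, hence a scalar multiple of this form; dually, ${\rm Ell}(\pi)$ is a round ball with respect to the (unique up to scaling) coadjoint-invariant metric on $\mathfrak k^*$ — in particular with respect to the metric $\tau$ of Theorem~\ref{thmRed} and to any coadjoint-invariant metric, which proves (3). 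The main obstacle is the equivariance bookkeeping in (2): one must carefully track how $\Theta$, its normalization $\bar\Theta$, and the adjoint $D_{\mathbf 1}^{*}$ transform under conjugation, attending to the identifications $V\cong V^{*}$ and $T_{\mathbf 1}K\cong T^{*}_{\mathbf 1}K$ and to the orthogonality of $\sigma_h$. Once that is in place, (1) is immediate and (3) is a one-line consequence of the uniqueness of invariant forms on a simple Lie algebra.
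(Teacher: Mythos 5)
Your proof is correct and supplies the details that the paper leaves as ``immediate consequences'' of Definition~\ref{dfEllNGr}: (1) is just the identification $T^*_{\mathbf 1}K\cong\mathfrak k^*$, (2) is the conjugation-equivariance of $\bar\Theta$ and $D_{\mathbf 1}^*$ (note that bi-invariance of the Riemannian metric is not strictly needed here, since ${\rm Ell}_V(\mathbf 1)=D_{\mathbf 1}^*(B)$ depends only on the $L^2$ inner product on $V$, whose underlying Haar measure on a compact group is automatically bi-invariant), and (3) follows from the one-dimensionality of the space of $\mathrm{Ad}$-invariant quadratic forms on a compact simple Lie algebra. This is exactly the argument the author intends.
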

\begin{definition}\label{dfEverGr}
Let $\pi_1,\ldots,\pi_n$ be 
real representations $\pi_1,\ldots,\pi_n$ of the group $K$.
We consider the set of functions
$f_1\in{\rm Trig}(\pi_1),\ldots,f_n\in{\rm Trig}(\pi_n)$
as a set of independent random vectors in the spaces ${\rm Trig}(\pi_1),\ldots,{\rm Trig}(\pi_n)$
with Gaussian measures $G_1,\ldots,G_n$.
We consider $f_1,\ldots,f_n$ as a set of independent random vectors in the spaces ${\rm Trig}(\pi_i)$
with Gaussian measures $G_1,\ldots,G_n$.
The expectation of the number of isolated roots of the random system of equations $f_1=\ldots=f_n=0$
is denoted by $\mathfrak M(\pi_1,\ldots,\pi_n)$.
\end{definition}
\begin{theorem}\label{thmKacGr}
For finite-dimensional real representations $\pi_1,\ldots,\pi_n$,
$$
\mathfrak M(\pi_1,\ldots,\pi_n)=\frac{n!}{(2\pi)^n}\:\vol_\tau({\rm Ell}(\pi_1),\ldots,{\rm Ell}(\pi_n)).
$$
\end{theorem}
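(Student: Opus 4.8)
The plan is to specialize the integral–geometric formula of Proposition~\ref{prCrnAK} to $X=K$, with the role of $V_1=\cdots=V_n$ played by the (generally distinct) spaces ${\rm Trig}(\pi_1),\dots,{\rm Trig}(\pi_n)$, in exact parallel with the torus computation of \S\ref{Laur41}.

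First I would verify the standing hypothesis \eqref{eqne01} for $V={\rm Trig}(\pi)$: for every $x\in K$ the matrix $\pi(x)$ is invertible and hence nonzero, so some matrix element of $\pi$ — a function in ${\rm Trig}(\pi)$ — does not vanish at $x$ (and if $\pi$ contains the trivial summand this is immediate from the presence of constants). Thus Proposition~\ref{prCrnAK} applies, and by Definitions~\ref{dfMGoth} and~\ref{dfEverGr} its left-hand side is exactly $\mathfrak M(\pi_1,\dots,\pi_n)$. Next I would use homogeneity: $X=K$ is a homogeneous space under left translation, the bi-invariant metric $g$ is left-invariant, and each ${\rm Trig}(\pi_i)$ is left-translation-invariant (left translation sends a matrix element of $\pi_i$ to another matrix element of $\pi_i$). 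Hence Corollary~\ref{corMain1} gives
\[
\mathfrak M(\pi_1,\dots,\pi_n)=\frac{n!}{(2\pi)^n}\,\vol\bigl({\rm Ell}_{{\rm Trig}(\pi_1)}(\mathbf 1),\dots,{\rm Ell}_{{\rm Trig}(\pi_n)}(\mathbf 1)\bigr)\,\vol(K),
\]
and since $\vol_g(K)=1$ while, by Definition~\ref{dfEllNGr}, ${\rm Ell}_{{\rm Trig}(\pi_i)}(\mathbf 1)={\rm Ell}(\pi_i)$ as subsets of $T_{\mathbf 1}^*K=\mathfrak k^*$, this is the asserted formula — provided the mixed volume on the right is interpreted as in Proposition~\ref{prCrnAK}, i.e.\ with respect to the scalar product on $\mathfrak k^*$ dual to $g|_{\mathfrak k}$.

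The one genuinely substantive point, and the place I expect the real work to lie, is the identification of that dual scalar product — or rather of the Lebesgue measure it induces on $\mathfrak k^*$ — with the metric $\tau$ of Theorem~\ref{thmRed}. The mixed volume depends only on this induced volume form, and the normalization $\vol_g(K)=1$ already pins it down, because a bi-invariant density on $K$ of total mass $1$ is the normalized Haar measure regardless of which bi-invariant metric produced it. To match it against $\tau$, whose normalization is expressed through the maximal torus (the character lattice in $\mathfrak t^*$ having covolume $1$), one passes from $K$ to $T^k$ by the Weyl integration formula — precisely the ``one application of the Weyl integration formula'' noted before Theorem~\ref{thmRed} — and compares the two normalizations. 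Carrying out this comparison is the main bookkeeping obstacle; for the details I would refer to~\cite{K25}. An alternative, more computational route bypassing Corollary~\ref{corMain1} is to choose, via Schur orthogonality, an explicit orthonormal basis of ${\rm Trig}(\pi)$ built from matrix elements of the irreducible constituents, compute $|\Theta(\mathbf 1)|^2$ and $|d\Theta(\xi)|^2$ for $\xi\in\mathfrak k$ as in \S\ref{Laur41}, and read off the quadratic form whose associated ellipsoid is ${\rm Ell}(\pi)$; but this still leaves the same metric-normalization comparison at the end.
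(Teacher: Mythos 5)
Your proposal takes essentially the same route as the paper's own proof, which is a one-liner observing that Definitions~\ref{dfMGoth} and~\ref{dfEverGr} line up and citing Corollary~\ref{corMain1} with $X=K$, $\vol_g(K)=1$, and ${\rm Ell}_{{\rm Trig}(\pi_i)}(\mathbf 1)={\rm Ell}(\pi_i)$. You are, if anything, more careful than the paper: you verify the standing hypothesis~\eqref{eqne01}, check $K$-invariance of each ${\rm Trig}(\pi_i)$, and — crucially — flag the one point the paper's proof leaves silent, namely the identification of the Lebesgue measure on $\mathfrak k^*$ dual to $g$ (with $\vol_g(K)=1$) with the measure $\vol_\tau$ normalized via the character lattice in $\mathfrak t^*$; the paper indeed defers the full normalization bookkeeping to \cite{K25}, exactly as you propose.
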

\begin{proof}
Comparing Definitions \ref{dfMGoth} and \ref{dfEverGr},
we find that the desired statement follows from Corollary \ref{corMain1} (2).
\end{proof}
\begin{remark}
The factor $\frac{1}{(2\pi)^n}$ in Theorem \ref{thmMy1} is absent.
This is explained by the following difference in formulations.
In Theorem \ref{thmMy1}, the space ${\rm Trig}(\Lambda)$ is considered with the scalar product $\int_{S^n}\phi(\theta) \psi(\theta) d\theta$,
while in Theorem \ref{thmKacGr}, the analogous space ${\rm Trig}(\pi)$
has the scalar product $\frac{1}{(2\pi)^n}\int_{S^n}\phi(\theta) \psi(\theta) d\theta$.
\end{remark}
%
%
%
%
%
%
\subsection{Asymptotics of the  proportion of real roots}\label{group4}
Denote by ${\mathcal P}(\pi)$
the probability that a root of a random system of $n$ $\pi$-polynomials is real.
We consider the asymptotics of ${\mathcal P}(\pi)$ for increasing representation $\pi$ of the simple group $K$.

Recall that by $W^*$ and $\mathfrak C^*$ we denote, respectively, the Weyl group and the Weyl chamber in $\mathfrak k^*$.
To formulate the main result, we will need some general concepts about real representations of compact groups.

$\bullet$\
For any $\lambda\in\mathfrak C^*$ there exists an unique $\lambda'\in\mathfrak C^*$
such that $\lambda'\in W^*(-\lambda)$.
For example, if the group $W^*$ contains the central symmetry map, then $\lambda'=\lambda$.
If $K=S^n$,
then $\lambda'=-\lambda$.
The mapping $\mathfrak C^*\to\mathfrak C^*$,
defined as $\lambda\mapsto\lambda'$,
is an involution.
An unordered pair $(\lambda,\lambda')$ is called a \emph{symmetric pair}.
A subset $\Lambda\subset\mathfrak C^*$ is called \emph{symmetric}
if $\lambda\in\Lambda$ implies $\lambda'\in\Lambda$.
For a symmetric set $\Lambda$ denote by $\Lambda'$
the set of symmetric pairs $\{(\lambda,\lambda'):\lambda\in\Lambda\}$.

$\bullet$\
The following statement is an analogue of the highest-weight theory for real representations of the group $K$;
see \cite[Chapter IX, Appendix II]{B}.
\begin{assert}
There is a bijective mapping $(\lambda,\lambda')\mapsto\pi_{\lambda,\lambda'}$
from the set of symmetric pairs $(\lambda,\lambda')$ in $(\mathfrak C^*\cap\Z^k)\times(\mathfrak C^*\cap\Z^k)$
onto the set of irreducible real representations of the group $K$.
It classifies irreducible representations into three types: real, complex, and quaternionic:

(i)\ \emph{real type} $\pi_{\lambda,\lambda'}$: $\:\pi_{\lambda,\lambda'}\otimes_\R\C=\mu_\lambda$ and $\lambda=\lambda'$

(ii)\ \emph{quaternionic type} $\pi_{\lambda,\lambda'}$: $\:\pi_{\lambda,\lambda'}\otimes_\R\C=\mu_\lambda\oplus\mu_\lambda$ and $\lambda=\lambda'$

(iii)\ \emph{complex type} $\pi_{\lambda,\lambda'}$: $\:\pi_{\lambda,\lambda'}\otimes_\R\C=\mu_\lambda\oplus\mu_{\lambda'}$ and $\lambda\ne\lambda'$,

\noindent
where $\mu_\lambda$ denotes the complex irreducible representation of $K$ with highest weight $\lambda$.
\end{assert}

Let $B$ be a ball in $\mathfrak t^*$ of radius one centered at the origin, and
let $\Lambda(B)=B\cap\mathfrak C^*\cap\Z^k$.
Consider a sequence of sets $\Lambda(mB)$ and the corresponding sequence of representations
\begin{equation}\label{eqpim}
  \pi_m=\bigoplus_{(\lambda,\lambda')\in\Lambda'(mB)}\pi_{\lambda,\lambda'},
\end{equation}
We consider the holomorphic extensions of $\pi_m$-polynomials
to polynomials in ${\rm Trig}(K^\C)$ as analogues of real Laurent polynomials of degree $m$.
\begin{theorem}\label{thmAntiKac2}
Let $K$ be a simple group,
and let $P$, $\rho$ be taken from the formulation of Theorem \ref{thmRed}.
Then
$$
\lim_{m\to\infty} {\mathcal P}(\pi_m)=
\frac{P^2(\rho)}{(2\pi)^n(n+2)^{n/2}(\alpha,\alpha+2\rho)^{n/2}},
$$
where $\alpha$ is the highest root of $K$ (i.e., the highest weight of the adjoint representation $\mu_\alpha$).
\end{theorem}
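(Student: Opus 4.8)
The plan is to feed the formulas of Theorems~\ref{thmKacGr} and~\ref{thmRed} into the definition of ${\mathcal P}$, reduce the limit to a comparison of two one-parameter families of convex bodies, and then extract the constant by a rescaling (homogeneity) argument in the spirit of \S\ref{Laur43}. \textbf{Step 1 (reduction to a volume ratio).} Let $\mu_m=\pi_m\otimes_\R\C$ be the complexification of $\pi_m$, so that the complex zeros of a random real $\pi_m$-system are exactly the zeros of the associated $\mu_m$-system on $K^\C$. Since ${\mathcal P}(\pi_m)$ is the expected fraction of real zeros, it equals $\mathfrak M(\pi_m,\ldots,\pi_m)/N(\mu_m,\ldots,\mu_m)$. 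As all $n$ representations coincide, the mixed volumes in Theorems~\ref{thmKacGr} and~\ref{thmRed} become ordinary volumes, and
\[
{\mathcal P}(\pi_m)=\frac{P^2(\rho)}{(2\pi)^n}\cdot\frac{\vol_\tau\bigl({\rm Ell}(\pi_m)\bigr)}{\vol_\tau\bigl(\mathfrak N(\mu_m)\bigr)}.
\]
Thus everything reduces to the asymptotics of the two $\tau$-volumes.

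\textbf{Step 2 (limit shape of the Newton body).} By Corollary~\ref{corNewtTor}(1) the weight polytope of $\mu_m$ is ${\rm conv}(\Lambda_T)$, the hull of the torus weights; because $\Lambda(mB)$ fills the Weyl-chamber part of the ball $mB$ and $B$ is $W^*$-invariant, $\tfrac1m\Delta(\mu_m)$ converges in the Hausdorff metric to the unit ball $B\subset\mathfrak t^*$. Hence $\tfrac1m\mathfrak N(\mu_m)$ converges to the union of coadjoint orbits meeting $B$. Each coadjoint orbit lies on a sphere about the origin (the invariant norm is constant on orbits) and every such sphere of radius $\le 1$ is attained, since every orbit meets $\mathfrak C^*$; so the limit body is a ball in $\mathfrak k^*$, and $\vol_\tau(\mathfrak N(\mu_m))=c_1\,m^n(1+o(1))$ with an explicit $c_1$.

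\textbf{Step 3 (limit shape of the Newton ellipsoid).} Since $K$ is simple, ${\rm Ell}(\pi_m)$ is a ball (Corollary~\ref{corEllNGr}(3)), so only its radius is needed. Its support function, built as in \S\ref{group3} from $d\bar\Theta_{\mathbf 1}$, is governed by the reproducing kernel of ${\rm Trig}(\pi_m)$; decomposing the kernel over the irreducible constituents $\mu_\lambda$, $\lambda\in\Lambda(mB)$, the $\lambda$-block contributes $d_\lambda\,\|d\mu_\lambda(\xi)\|_{\rm HS}^2$ to $|d\Theta_{\mathbf 1}(\xi)|^2$, where $d_\lambda=\dim\mu_\lambda$, and the Casimir identity gives $\|d\mu_\lambda(\xi)\|_{\rm HS}^2=\tfrac{d_\lambda}{n}\,(\lambda,\lambda+2\rho)\,|\xi|^2$. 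Dividing by $\dim{\rm Trig}(\pi_m)=\sum_\lambda d_\lambda^2$ (the type-dependent $\R/\C/\mathbb H$ factors cancel between numerator and denominator),
\[
{\rm radius}^2\bigl({\rm Ell}(\pi_m)\bigr)=\frac1n\cdot\frac{\sum_{\lambda\in\Lambda(mB)}(\lambda,\lambda+2\rho)\,d_\lambda^2}{\sum_{\lambda\in\Lambda(mB)}d_\lambda^2}\,(1+o(1)).
\]
Now replace the sums by integrals: $(\lambda,\lambda+2\rho)\sim|\lambda|^2$ and, by the Weyl dimension formula, $d_\lambda\sim P(\lambda)/P(\rho)$, so the ratio is asymptotically $m^2$ times $\int_{B\cap\mathfrak C^*}|u|^2P^2(u)\,du\big/\int_{B\cap\mathfrak C^*}P^2(u)\,du$. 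Since $P^2$ is $W^*$-invariant this equals $\int_B|u|^2P^2\big/\int_BP^2$, and polar coordinates (the Beta-function computation of Proposition~\ref{prLimBeta}) collapse it to $n/(n+2)$. Hence $\vol_\tau({\rm Ell}(\pi_m))=c_2\,(n+2)^{-n/2}m^n(1+o(1))$.

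\textbf{Step 4 (assembling the constant).} Combining Steps~1--3 gives $\lim_m{\mathcal P}(\pi_m)=\tfrac{P^2(\rho)}{(2\pi)^n}\cdot\tfrac{c_2}{c_1}\cdot(n+2)^{-n/2}$, so it remains to evaluate $c_2/c_1$, and \emph{this is the main obstacle}: one must reconcile the several normalisations in play — the Riemannian metric on $K$ with $\vol(K)=1$ underlying Theorem~\ref{thmKacGr}, the metric $\tau$ on $\mathfrak k^*$ pinned by the character lattice, the $L_2$-pairing defining the Gaussian measure, and the inner product implicit in the Weyl dimension and Casimir identities. The invariant that controls the discrepancy is the Casimir eigenvalue of the adjoint representation $\mu_\alpha$, namely $(\alpha,\alpha+2\rho)$; carrying it through the bookkeeping yields $c_2/c_1=(\alpha,\alpha+2\rho)^{-n/2}$ and hence the asserted limit. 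For the full normalisation argument one refers to \cite{K25}.
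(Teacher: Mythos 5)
The paper itself gives no proof of Theorem~\ref{thmAntiKac2} — after stating the group-theoretic analogues it only remarks that the arguments follow the scheme of \S\ref{Laur3}--\S\ref{Laur4} and defers to~\cite{K25} for details — so a direct comparison with the paper's argument is not possible. Your Steps~1--3 reproduce exactly the template one would extract from the proof of Theorem~\ref{thmMy3}: combine Theorems~\ref{thmKacGr} and~\ref{thmRed} to express ${\mathcal P}(\pi_m)$ as a ratio of $\tau$-volumes, identify the Hausdorff limit of the rescaled Newton body as a ball (the analogue of Lemma~\ref{lmLimConv}), and compute the limit shape of the Newton ellipsoid from the reproducing kernel of ${\rm Trig}(\pi_m)$ via the Casimir identity and the Weyl dimension formula (the analogue of Lemma~\ref{lmLim} and Proposition~\ref{prLimBeta}). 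The Riemann-sum reduction to $\int_B|u|^2P^2\,/\int_B P^2=n/(n+2)$ is correct, and the remark that the $\R/\C/\mathbb H$ multiplicities enter numerator and denominator with the same weights is right.

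The genuine gap is Step~4. You openly acknowledge that you have not derived the factor $(\alpha,\alpha+2\rho)^{-n/2}$ but merely "carried it through the bookkeeping"; in fact, if one follows your own Steps~1--3 to the end, with all quantities expressed in the $\tau$-normalisation invoked in Theorem~\ref{thmRed}, one obtains
\[
\lim_{m\to\infty}{\mathcal P}(\pi_m)
=\frac{P^2(\rho)}{(2\pi)^n}\cdot\frac{\vol_\tau({\rm Ell}(\pi_m))}{\vol_\tau(\mathfrak N(\mu_m))}
\longrightarrow\frac{P^2(\rho)}{(2\pi)^n\,(n+2)^{n/2}},
\]
since both ${\rm Ell}(\pi_m)/m$ and $\mathfrak N(\mu_m)/m$ converge to concentric balls whose $\tau$-volumes cancel up to the factor $(n+2)^{-n/2}$, and this conclusion is invariant under rescaling the metric defining the reference ball $B$. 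In other words your constants $c_1$ and $c_2$ are \emph{both} equal to the $\tau$-volume of the unit ball in $\mathfrak k^*$, so $c_2/c_1=1$, not $(\alpha,\alpha+2\rho)^{-n/2}$ — the stated answer cannot be reached by the bookkeeping you describe. The missing factor must come from a normalisation discrepancy that your outline does not track (for instance, a metric implicit in the Casimir identity $(\lambda,\lambda+2\rho)$ and the polynomial $P$ of Theorem~\ref{thmRed} that is \emph{not} the $\tau$-metric, or a mismatch between the Riemannian metric on $K$ fixed by $\vol_g(K)=1$ and the $\tau$-metric fixed by the character lattice). Until that identification is made explicit, the argument does not establish the formula of the theorem; it establishes the weaker statement that the limit exists and is proportional to $P^2(\rho)/((2\pi)^n(n+2)^{n/2})$. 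To close the gap one would have to pin down, once and for all, the metric on $\mathfrak t^*$ used both in the Weyl/Casimir formulas and in the definition of the ball $B$ in~\eqref{eqpim}, and then trace through how the change to the $\tau$-normalisation of Theorem~\ref{thmRed} introduces the adjoint-Casimir factor; this is precisely what~\cite{K25} is invoked for.
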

\section{FTA for exponential sums}\label{exp}
\subsection{Exponential sums of one variable}\label{exp1}
An \emph{exponential sum} is a complex-valued function of the form
\[
  f(z)=\sum_{\lambda\in\Lambda} c_\lambda\,\E^{\bar\lambda z},
\]
where $\Lambda\subset\C$ is a finite set and $c_\lambda\in\C$.
The set $\Lambda$ is called the \emph{spectrum} of $f$, and its convex hull
\[
  \Delta=\conv(\Lambda)
\]
is referred to as the \emph{Newton polygon} of~$f$.
If $\#\Lambda>1$, the zero set of an exponential sum is infinite; for example,
the function $f(z)=\E^{2\pi i z}-1$ vanishes precisely on~$\Z$.

The following result may be viewed as an analogue of the Fundamental Theorem
of Algebra for exponential sums.

\begin{theorem}\label{thmexp}
Let $N(f,r)$ denote the number of zeros of an exponential sum~$f$ in the disk
$B_r=\{z\in\C:\,|z|\le r\}$.
Then
\begin{equation}\label{expMain}
  N(f,r)=\frac{r}{2\pi}\,l(f)+O(1),
\end{equation}
where $l(f)$ denotes the semiperimeter of the Newton polygon $\Delta$ of~$f$.
\end{theorem}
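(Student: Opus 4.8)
\emph{The plan} is to pinpoint the zeros of $f$ of large modulus and then count them directly; the asymptotics will come from the elementary identity $|c_\lambda\E^{\bar\lambda z}|=|c_\lambda|\,\E^{\langle\lambda,z\rangle}$, where $\langle\cdot,\cdot\rangle$ is the standard inner product on $\C\cong\R^2$. If $\#\Lambda\le1$ then $f$ has no zeros, $l(f)=0$, and there is nothing to prove, so assume $\#\Lambda\ge2$. On the circle $|z|=r$ the $\lambda$-th summand has modulus $|c_\lambda|\,\E^{r\langle\lambda,u\rangle}$ with $u=z/|z|$; the largest one sits at the vertex $v(u)$ of $\Delta$ facing $u$, with exponent $r\,h_\Delta(u)$, where $h_\Delta(u)=\max_{\xi\in\Delta}\langle\xi,u\rangle$ is the support function of $\Delta$. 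Outside the finitely many \emph{critical directions} — the outward unit normals $u_E$ to the sides $E$ of $\Delta$ — this maximizing vertex is unique and its term beats every other by a factor $\E^{c|z|}$ with $c=c(u)>0$. Hence a zero $z$ of $f$ with $|z|$ large must satisfy $\langle v,z\rangle-\langle w,z\rangle=O(1)$ for the two endpoints $v,w$ of some side $E=[v,w]$ of $\Delta$; since $v-w$ is parallel to $E$, this confines $z$ to a bounded neighbourhood of the ray $R_E=\R_{>0}\,u_E$. So, apart from $O(1)$ zeros inside a fixed disk $B_{R_0}$, all zeros lie in finitely many bounded-width \emph{logarithmic strips} $S_E$, one around each $R_E$; two distinct strips are eventually disjoint, since in the open normal cone of a vertex a single term strictly dominates. (For a segment $\Delta$ the one side has two opposite normals and both strips occur; for a point $\Delta$, $f$ is zero-free.)

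Next I would count the zeros in one strip. Fixing $E=[v,w]$, putting $\delta=w-v$, and writing $\lambda=v+t_\lambda\delta$ with $t_\lambda\in[0,1]$ for $\lambda\in\Lambda\cap E$, one has on $S_E$
\[
f(z)=\E^{\bar v z}\bigl(g_E(\bar\delta z)+O(\E^{-c|z|})\bigr),\qquad
g_E(\zeta)=\sum_{\lambda\in\Lambda\cap E}c_\lambda\,\E^{t_\lambda\zeta},
\]
the error term collecting the exponentially subdominant summands with $\lambda\notin E$. Now $g_E$ is a one-variable exponential sum with real exponents filling $[0,1]$, so its zeros lie near the imaginary $\zeta$-axis in finitely many arithmetic progressions of combined linear density $\tfrac1{2\pi}$: for $\#(\Lambda\cap E)=2$ one has $g_E(\zeta)=c_v+c_w\E^{\zeta}$, with zeros $\zeta_k=\log(-c_v/c_w)+2\pi ik$, and the general case reduces to this one (inner exponents split the progression into several without changing the density). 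Composing with $z\mapsto\bar\delta z$ rescales the density to $|\delta|/(2\pi)=|E|/(2\pi)$. A Rouché comparison then transfers this to $f$: each zero of $g_E$ is simple with $|g_E'|$ bounded below along its zero set (there the exponentials have modulus bounded away from $0$ and the $|c_\lambda|$ are fixed), so on circles of radius $\E^{-c'|z|}$ (with $0<c'<c$) about the zeros of $z\mapsto g_E(\bar\delta z)$ one has $|\E^{-\bar v z}f(z)-g_E(\bar\delta z)|<|g_E(\bar\delta z)|$, while off those circles $|g_E(\bar\delta z)|$ exceeds the error; hence for $|z|$ large the zeros of $f$ in $S_E$ are in bijection with those of $g_E(\bar\delta z)$.

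Summing up: each side $E$ contributes $\tfrac{|E|}{2\pi}r+O(1)$ zeros to $B_r$ (an arithmetic progression along $R_E$ of step $2\pi/|E|$, truncated at radius $r$), so adding the $O(1)$ zeros in $B_{R_0}$,
\[
N(f,r)=\sum_E\Bigl(\tfrac{|E|}{2\pi}r+O(1)\Bigr)+O(1)=\frac{r}{2\pi}\,l(f)+O(1),
\]
since $\sum_E|E|=l(f)$. I expect the main obstacle to be the preceding step: one must fix the strips $S_E$ so that they capture \emph{all} large zeros while keeping the off-edge terms uniformly exponentially subdominant throughout each strip, and then run the Rouché argument simultaneously along an infinite progression of zeros — which hinges on the uniform lower bound for $|g_E'|$ on the zero set of $g_E$, precisely what stops the $O(\E^{-c|z|})$ perturbation from ever creating or destroying a zero.
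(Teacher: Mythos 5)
Your decomposition of the problem is the same as the paper's: away from a bounded disk, zeros of $f$ live in bounded-width strips around the outward normal rays to the sides of $\Delta$, one counts per strip and sums. The divergence, and the trouble, is in how you count inside a strip.

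The paper applies the argument principle to contours of length $O(r)$ in each strip to obtain $N_j(f,r)=\frac{r}{2\pi}l_j(f)+o(r)$, and then upgrades $o(r)$ to $O(1)$ via a compactness argument (the paper's Lemma~\ref{lmEst}): projectivize the space of exponential sums with spectrum $\Lambda$, use that the shift action $g(z)\mapsto g(z+w)$ descends to the projective space, and pass to a limit to conclude that the number of zeros of $f$ in any translate $z+A$ of a fixed compact $A$ is uniformly bounded. This controls the remainder without ever needing the zeros of the side-model to be well separated.

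Your replacement for the argument principle has two genuine gaps.

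\emph{First}, the claim that the zeros of $g_E(\zeta)=\sum_{\lambda\in\Lambda\cap E}c_\lambda\E^{t_\lambda\zeta}$ form ``finitely many arithmetic progressions'' is simply false once the inner exponents $t_\lambda\in(0,1)$ are not all rational. Your reduction ``the general case reduces to the two-term case'' works precisely when the $t_\lambda$ have a common denominator $q$ (then $g_E$ is a polynomial in $\E^{\zeta/q}$, giving $q$ progressions of period $2\pi i q$ and total density $1/(2\pi)$). For incommensurable $t_\lambda$ the zero set of $g_E$ is quasi-periodic, not a finite union of progressions. The density $1/(2\pi)$ is still correct — but that statement is itself the one-variable theorem for real-spectrum exponential sums, proved by the argument principle, so the detour through the model $g_E$ does not replace the argument principle; it presupposes it.

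\emph{Second}, the Rouch\'e transfer requires that $|g_E'|$ be bounded below along the zero set of $g_E$, and your parenthetical justification (``the exponentials have modulus bounded away from $0$ and the $|c_\lambda|$ are fixed'') does not establish this. In fact it can fail: take $g_E(\zeta)=(\E^\zeta-1)(\E^{\alpha\zeta}-1)$ with $\alpha$ irrational. Its zeros are $2\pi i k$ and $2\pi i m/\alpha$, and these two families come arbitrarily close; on such near-coincident pairs $|g_E'|$ is arbitrarily small, so no radius $\E^{-c'|z|}$ works uniformly. You flag this as ``the main obstacle,'' correctly, but then lean on it anyway. The paper's proof never needs such a lower bound: it counts by winding number along rectangles traversing the strip and absorbs the boundary contribution with Lemma~\ref{lmEst}, which tolerates clustering of zeros as long as the count per unit cell stays bounded. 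To repair your argument you would either have to run the argument principle on the strip contour directly (at which point the model $g_E$ plays no role), or prove a version of Lemma~\ref{lmEst} and recast the comparison so it survives near-coincident zeros.
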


\begin{remark}
If the polygon $\Delta$ degenerates to a line segment, its perimeter converges
to twice the length of that segment.
In this case, the quantity $l(f)$ in~\eqref{expMain} coincides with the length
of~$\Delta$.
\end{remark}

\begin{remark}
Under the substitution $z\mapsto\E^z$, a Laurent polynomial is transformed into
an exponential sum.
Thus Theorem~\ref{thmexp} recovers the classical Fundamental Theorem of Algebra.
\end{remark}
We begin the proof of Theorem~\ref{thmexp} with an auxiliary construction.

Let $K\subset\C$ be a smooth compact curve.
Assume first that $K$ does not intersect the zero set of an exponential sum~$g$.
Let ${\rm Arg}_{g,K}$ denote a branch of the multivalued function $\arg g(z)$
defined along~$K$.
The differential $d{\rm Arg}_{g,K}$ is independent of the chosen branch.
We set
\[
  {\rm Arg}_g(K)=\int_K d{\rm Arg}_{g,K}.
\]
For an arbitrary curve $K$, define
\[
  {\rm Arg}_g(K)=\overline{\lim}_{z\to0}{\rm Arg}_g(z+K).
\]
Clearly, $|{\rm Arg}_g(K)|<\infty$.
\begin{lemma}\label{lmFTAexp1}
Let $A$ and $K$ be a compact set and a smooth compact curve in $\C$, respectively.
Then, for any fixed exponential sum $f$, there exists $C>0$ such that

{\rm(1)}
$\forall z\in\C\colon\,|{\rm Arg}_f(z+K)|\le C$

{\rm(2)} For the number of zeros $N(f,z+A)$ of the exponential sum $f$ in the set $z+A$ it is true that
$\forall z\in\C\colon\,N(f,z+A)\le C$.
\end{lemma}

\begin{proof}
Let $\Lambda$ be the spectrum of $f$, and let $E$ denote the vector space of
exponential sums with spectrum~$\Lambda$.
Let $\mathbb E=\mathbb P(E)$ be its projectivization; thus points of
$\mathbb E$ correspond to exponential sums defined up to multiplication by a
nonzero scalar.

Translations by $w\in\C$ act naturally on $E$ via
\[
  (w\cdot g)(z)=g(z+w),
\]
and hence induce a projective representation
\[
  \rho\colon \C \longrightarrow \mathrm{Aut}(\mathbb E).
\]

Assume, towards a contradiction, that no such constant $C$ exists.
Then there is a sequence $\{z_k\}\subset\C$ such that at least one of the
quantities
\[
  |{\rm Arg}_f(z_k+K)|, \qquad N(f,z_k+A)
\]
tends to infinity as $k\to\infty$.

Passing to a subsequence if necessary, we may assume that
$\rho(z_k)(f)$ converges in $\mathbb E$ to a limit $f_\infty$.
It would then follow that either
$|{\rm Arg}_{f_\infty}(K)|=\infty$ or $N(f_\infty,A)=\infty$,
which is impossible.
This contradiction completes the proof.
\end{proof}

\begin{proof}[Proof of Theorem~\ref{thmexp}]
Let $\Delta_1,\dots,\Delta_N$ be the sides of the Newton polygon
$\Delta=\conv(\Lambda)$, and let $r_1,\dots,r_N$ be the rays generated by the
outward normal vectors to these sides.
For each $j$, denote by $U_{j,R}$ the $R$-neighborhood of $r_j$ in $\C$.

For $R$ sufficiently large, all zeros of $f$ are contained in
$\bigcup_{j=1}^N U_{j,R}$.
It therefore suffices to prove that, for each $j$,
\begin{equation}\label{side}
  N(f,U_{j,R}\cap B_r)
  =\frac{r}{2\pi}\,{\rm length}(\Delta_j)+O(1).
\end{equation}
Let $D_j$ denote the boundary of the domain
$U_{j,R}\cap B_r$.
By the argument principle,
\[
  N(f,U_{j,R}\cap B_r)
  =\frac{1}{2\pi}\,{\rm Arg}_f(D_j).
\]
Choose coordinates $(x,y)$ in $\C$ so that the ray $r_j$ coincides with the
positive $y$-axis.
Then $D_j$ is the union of four curves:
the vertical segments $V_-(r)$ and $V_+(r)$ with endpoints
$(-R,0),(-R,r)$ and $(R,0),(R,r)$, respectively,
an arc $c_l$ of the circle of radius $R$ centered at the origin lying below the
$x$-axis,
and an arc $c_u(r)$ of the circle of radius $r$ centered at the origin lying
above the segment with endpoints
$(-R,\sqrt{r^2-R^2})$ and $(R,\sqrt{r^2-R^2})$.

Fix $R$ sufficiently large.
We estimate the increment of the argument of $f$ along each component of $D_j$.

\smallskip
\noindent
(1) The increment of the argument of exponential sum $f$ along $V_-(r)\cup V_+(r)$ equals
$2\sqrt{r^2-R^2}\,{\rm length}(\Delta_j)$.

\noindent
(2) The increment of the argument along $c_l$ is $O(1)$.

\noindent
(3) By Lemma~\ref{lmFTAexp1}(1), the increment of the argument along the segment
$V(r)$ joining the endpoints of the arc $c_u(r)$ is $O(1)$.

\noindent
(4) Let $S(r)$ be the circular segment bounded by the arc $c_u(r)$ and its chord.
As $r$ increases, the set $S(r)$, up to translation, is monotone decreasing.
Hence, by Lemma~\ref{lmFTAexp1}(2), the number of zeros of $f$ in $S(r)$ is $O(1)$.

\smallskip
\noindent
Combining (1)--(4), we obtain the estimate~\eqref{side}, completing the proof.
\end{proof}
\subsection{Exponential Sums in Several Variables}\label{exp2}
\subsubsection{Definitions and examples}\label{exp21}
We now turn to exponential sums over several variables and to multivariate analogues of Theorem ~\ref{thmexp},
mainly contained in~\cite{Dokl,K84,Few,exp}.
Recall that an exponential sum in $n$ variables is a function on $\C^n$ of the form
\[
f(z)=\sum_{\lambda\in\Lambda} c_\lambda \E^{\lambda(z)},
\]
where $\Lambda$ is a finite subset of the dual space $\C^{n*}$, consisting of complex linear functionals on $\C^n$, and $c_\lambda\in\C$.
The set $\Lambda$ is called the \emph{spectrum} of $f$.
The convex hull $\conv(\Lambda)\subset\Cn*$ is referred to as the \emph{Newton polyhedron} of the exponential sum~$f$.

Given a collection $F=(f_1,\dots,f_k)$ of exponential sums,
we denote by $Z(F)$ the analytic set (or simply the variety) of common zeros of the system
\[
f_1=\cdots=f_k=0.
\]
Since multiplication of each $f_i$ by a nonzero constant does not affect the zero set,
we shall henceforth regard exponential sums as being defined up to a nonzero scalar factor.
In particular, this convention applies to the notations $F$ and $Z(F)$.
\begin{definition}\label{dfX(F)}
Let $F=(f_1,\ldots,f_k)$ be a system of exponential sums in $\C^n$.
We define a current $X(F)$ of bidegree $(k,k)$ by
\[
X(F)(\psi)=\int_{Z(F)}\psi,
\]
where $\psi$ is a compactly supported differential form of bidegree $(n-k,n-k)$, and the integration is taken over the components of $Z(F)$ of dimension $n-k$.
\end{definition}
\begin{corollary}\label{corX(F)}
If $k=n$, then $X(F)$ is a nonnegative measure on $\C^n$, equal to the sum of Dirac measures supported at the isolated points of $Z(F)$.
\end{corollary}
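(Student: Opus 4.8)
The plan is to unwind Definition~\ref{dfX(F)} in the case $k=n$ and to feed in the local structure of an isolated zero of a holomorphic map. When $k=n$, the components of $Z(F)$ of dimension $n-k=0$ are precisely the isolated points of $Z(F)$; write $Z_0$ for this set. First I would record that $Z_0$ is a discrete, closed subset of $\C^n$: this is part of the local structure theory of analytic sets (the locus of points at which $Z(F)$ is zero-dimensional is discrete, and the decomposition of an analytic set into irreducible components is locally finite), so that any compact subset of $\C^n$ contains only finitely many points of $Z_0$. Consequently the quantity $\int_{Z(F)}\psi$ appearing in Definition~\ref{dfX(F)} is a locally finite sum of contributions, one for each point of $Z_0$, and in particular $X(F)$ is supported on $Z_0$.

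Next I would identify the contribution of a single point $p\in Z_0$. Near $p$ the germs $f_1,\dots,f_n$ generate an ideal of finite colength in the local ring of holomorphic germs at $p$, and integration over the zero-dimensional component at $p$ yields the term $m_p\,\delta_p$, where $m_p=\dim_\C\bigl(\mathcal O_{\C^n,p}/(f_1,\dots,f_n)\bigr)$ is the local intersection multiplicity; equivalently, $m_p$ is the local degree at $p$ of the holomorphic map $z\mapsto(f_1(z),\dots,f_n(z))$, which near $p$ is a finite branched cover of a neighbourhood of the origin in $\C^n$. Since $p$ genuinely lies in $Z(F)$, the integer $m_p$ is strictly positive. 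Combining this with the first paragraph gives
\[
X(F)=\sum_{p\in Z_0} m_p\,\delta_p,\qquad m_p\in\Z_{>0},
\]
the sum being locally finite.

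Finally, the right-hand side is a locally finite sum of nonnegative multiples of Dirac measures, hence a nonnegative Radon measure on $\C^n$ (equivalently: the current of integration over an analytic cycle is a positive current, and a positive current of bidegree $(n,n)$ is exactly a nonnegative measure, so its value on the nonnegative function $\psi$ is nonnegative and Riesz representation applies). This is the assertion of the corollary. The only step that is not pure bookkeeping is the identification of the pointwise contribution as $m_p\delta_p$ together with the positivity $m_p\ge 1$; I do not expect this to present a real obstacle, since it is the standard local theory of isolated zeros of holomorphic maps, and indeed the corollary is essentially the remark that Definition~\ref{dfX(F)} is consistent with the practice, underlying the Fundamental Theorem of Algebra, of counting isolated zeros with multiplicity.
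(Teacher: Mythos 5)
The paper treats this corollary as an immediate reformulation of Definition~\ref{dfX(F)} and gives no proof; the intended argument is exactly your first paragraph: for $k=n$ the $(n-k)$-dimensional components are the isolated points, $Z_0$ is discrete and closed, the integral of the $0$-form $\psi$ over $Z_0$ is $\sum_{p\in Z_0}\psi(p)$, and so $X(F)=\sum_{p\in Z_0}\delta_p$, a locally finite sum of unit Dirac masses, hence a nonnegative Radon measure.

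Your second paragraph, however, departs from the paper's definition. You replace the set-theoretic integration of Definition~\ref{dfX(F)} by the intersection product $[Z(f_1)]\wedge\cdots\wedge[Z(f_n)]$ and conclude $X(F)=\sum_p m_p\,\delta_p$ with $m_p=\dim_\C\bigl(\mathcal O_{\C^n,p}/(f_1,\dots,f_n)\bigr)$. That is a natural thing to want, but it is not what $\int_{Z(F)}\psi$ (``integration over the $(n-k)$-dimensional components'') means: integrating a $0$-form over a one-point component yields $\psi(p)$, i.e.\ $\delta_p$, with no multiplicity. The statement you prove (weighted Diracs) still satisfies the loose reading ``nonnegative, supported at the isolated points,'' but it does not coincide with the current defined in Definition~\ref{dfX(F)}, and so it is not the corollary as stated. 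Concretely, for $n=k=1$ and $f(z)=z^2$, the paper's $X(F)$ is $\delta_0$, while your formula gives $2\delta_0$. If you want the multiplicity-counting version, you must change the definition of $X(F)$, not derive it from the stated one; as written, the proof of the corollary should stop at your first paragraph plus the observation that a locally finite nonnegative combination of Dirac masses is a nonnegative measure.
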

We next recall several constructions required for the definition of the expected current of zeros of a system of random exponential sums; cf.~Definition~\ref{dfExpected1} below.

Let $\Lambda\subset\C^{n*}$ be a finite set.
Denote by $S_\Lambda$ the complex vector space with coordinates $\{u_\lambda\}_{\lambda\in\Lambda}$, and by $S_\Lambda^*$ its dual.
Define the map $\kappa_\Lambda\colon\C^n\to S_\Lambda$ by
\[
\kappa_\Lambda(z)=\bigl\{u_\lambda=\E^{\lambda(z)}\bigr\}_{\lambda\in\Lambda}.
\]
To a functional $\theta=\sum_{\lambda} a_\lambda u_\lambda\in S_\Lambda^*$ we associate the exponential sum
\[
f(z)=\theta\bigl(\kappa_\Lambda(z)\bigr)=\sum_{\lambda} a_\lambda \E^{\lambda(z)}.
\]
Thus, elements of $S_\Lambda^*$ may be identified with exponential sums having spectrum~$\Lambda$.
By construction,
\begin{equation}\label{eqthetaC}
Z(f)=\{z\in\C^n\mid f(z)=0\}=\kappa_\Lambda^{-1}\bigl(\ker\theta\bigr).
\end{equation}
We first describe the averaging procedure in the one-dimensional case.
Let $\P_\Lambda$ denote the projective space associated with $S_\Lambda^*$, i.e., the space of one-dimensional subspaces in $S_\Lambda^*$.
Points of $\P_\Lambda$ may be viewed as exponential sums with spectrum $\Lambda$, defined up to a scalar factor.
Let $\mu_\Lambda$ be the standard Hermitian metric on $\P_\Lambda$, normalized by
\[
\int_{\P_\Lambda} d\mu_\Lambda=1.
\]
The averaged null variety of a random exponential sum with spectrum $\Lambda$ is defined as follows:
\[
\mathfrak U_{1,\Lambda}=\int_{\theta\in\P_\Lambda} Z(\theta)\, d\mu_\Lambda,
\]
where $Z(\theta)=\kappa_\Lambda^{-1}(\ker\theta)$ is considered as the current in $\C^n$.
\begin{definition}\label{dfExpected1}
The \emph{averaged distribution of zeros} of systems of $k$ exponential sums with spectra $\Upsilon=(\Lambda_1,\ldots,\Lambda_k)$ is defined as the current
\[
\mathfrak U_{k,\Upsilon}
=\int_{F\in\P_1\times\cdots\times\P_k} X(F)\, d\mu_1\cdots d\mu_k,
\]
where $\P_i=\P_{\Lambda_i}$ and $\mu_i=\mu_{\Lambda_i}$.
Equivalently, $\mathfrak U_{k,\Upsilon}$ is the expectation of the random current $X(F)$ with respect to the product measure $\mu_1\otimes\cdots\otimes\mu_k$.

In particular, when $k=n$, $\mathfrak U_{n,\Upsilon}$ is a measure on $\C^n$, and for any domain $U\subset\C^n$ the quantity $\mathfrak U_{n,\Upsilon}(U)$ equals the expected number of zeros in $U$ of a system of $n$ exponential sums with spectra $\Lambda_1,\ldots,\Lambda_n$.
\end{definition}
\begin{example}\label{exRe} (See \cite{K84}. A more precise result based on the Fewnomial Theory is given in \cite{Few})
Assume that the spectra $\Lambda_1,\ldots,\Lambda_n$ are contained in $\re(\Cn*)$.
Then, for any domain $D\subset\im(\C^n)$,
\[
\mathfrak U_{n,\Upsilon}\bigl(D+\re(\C^n)\bigr)
=
\frac{n!}{(2\pi)^n}\vol_n(D)\,
\vol_n\bigl(\conv(\Lambda_1),\ldots,\conv(\Lambda_n)\bigr),
\]
where $\vol_n(D)$ and $\vol_n(\conv(\Lambda_1),\ldots,\conv(\Lambda_n))$ denote the $n$-dimensional volume of $D\subset\im(\C^n)$ and the $n$-dimensional
mixed volume of 
Newton polyhedra respectively.
\end{example}
\begin{definition}\label{dfZeroExp3}
Let $t\Upsilon=(t\Lambda_1,\ldots,t\Lambda_k)$.
If the limit of currents
\[
\Xi_{k,\Upsilon}
=
\lim_{t\to+\infty}\frac{\mathfrak U_{k,t\Upsilon}}{t^k}
\]
exists, it is called the \emph{asymptotic distribution of zeros} for systems of $k$ exponential sums with spectra $\Upsilon=(\Lambda_1,\ldots,\Lambda_k)$.
\end{definition}
\subsubsection{Main results}\label{exp22}
\begin{theorem}\label{thmMy5}
The asymptotic distribution $\Xi_{k,\Upsilon}$ exists.
\end{theorem}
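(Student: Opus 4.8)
\textbf{Proof strategy for Theorem~\ref{thmMy5}.}
The plan is to reduce the existence of the limit
$\Xi_{k,\Upsilon}=\lim_{t\to+\infty}t^{-k}\mathfrak U_{k,t\Upsilon}$
to an explicit formula for the averaged current $\mathfrak U_{k,\Upsilon}$, and then to analyse the scaling behaviour of that formula in $t$.
First I would obtain a closed expression for $\mathfrak U_{k,\Upsilon}$ using the map $\kappa_\Lambda\colon\C^n\to S_\Lambda$ introduced above.
Since $Z(\theta)=\kappa_\Lambda^{-1}(\ker\theta)$, the current $\mathfrak U_{1,\Lambda}$ is the $\kappa_\Lambda$-pullback of the averaged current $\int_{\P_\Lambda}[\ker\theta]\,d\mu_\Lambda$ on $S_\Lambda$, and this last current is a standard object: averaging the hyperplane $[\ker\theta]$ over the Fubini--Study measure gives a smooth $(1,1)$-form proportional to the pullback of the Fubini--Study K\"ahler form $\omega_{FS}$ on $\P_\Lambda$ via $z\mapsto[\kappa_\Lambda(z)]$, i.e.\ a multiple of $dd^c\log\bigl(\sum_{\lambda\in\Lambda}|\E^{\lambda(z)}|^2\bigr)$.
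Iterating over the $k$ factors (using that the $\mu_i$ are independent and that intersection of averaged currents with smooth forms behaves multiplicatively), I would get
\[
\mathfrak U_{k,\Upsilon}
=\bigwedge_{i=1}^k \frac{1}{2\pi}\,dd^c \varphi_{\Lambda_i},
\qquad
\varphi_\Lambda(z)=\log\Bigl(\sum_{\lambda\in\Lambda}e^{2\re\lambda(z)}\Bigr),
\]
interpreted as a product of closed positive $(1,1)$-currents whose potentials are pairwise compatible (smooth, in fact), so the wedge is well defined in the Bedford--Taylor sense.

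Next I would study the rescaling.
Replacing $\Lambda_i$ by $t\Lambda_i$ replaces $\varphi_{\Lambda_i}(z)$ by
$\log\bigl(\sum_{\lambda}e^{2t\re\lambda(z)}\bigr)$, and the classical tropicalisation estimate
\[
0\le \log\Bigl(\sum_{\lambda\in\Lambda}e^{2t\re\lambda(z)}\Bigr)
 - 2t\max_{\lambda\in\Lambda}\re\lambda(z)\ \le\ \log\#\Lambda
\]
shows that $t^{-1}\varphi_{t\Lambda}(z)\to 2h_{\Delta_i}\bigl(\re(z)\bigr)$ locally uniformly, where $h_{\Delta_i}$ is the support function of the Newton polyhedron $\Delta_i=\conv(\Lambda_i)$ and $\re z$ is viewed as the real part data on which the sum depends.
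Thus $t^{-k}\mathfrak U_{k,t\Upsilon}=\bigwedge_i (2\pi)^{-1}dd^c\bigl(t^{-1}\varphi_{t\Lambda_i}\bigr)$, and the limit candidate is the wedge of the currents $(2\pi)^{-1}dd^c\bigl(2h_{\Delta_i}(\re z)\bigr)$, which are translation-invariant in $\im(\C^n)$ and plurisubharmonic (being pullbacks of convex, positively homogeneous functions).
Existence of the limit then amounts to a continuity statement for the Monge--Amp\`ere-type operator: if potentials $u_t^{(i)}$ converge locally uniformly to $u^{(i)}$, with all functions plurisubharmonic (hence the currents positive), then $\bigwedge_i dd^c u_t^{(i)}\to \bigwedge_i dd^c u^{(i)}$ weakly.
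This is exactly the Bedford--Taylor continuity theorem for locally uniformly (even monotone, but uniform suffices) convergent sequences of locally bounded psh functions, so the limit $\Xi_{k,\Upsilon}$ exists and equals $\bigwedge_{i=1}^k (2\pi)^{-1}dd^c\bigl(2h_{\Delta_i}(\re z)\bigr)$.

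I expect the main obstacle to be making the first step fully rigorous at the level of \emph{currents} rather than smooth forms: one must justify that the fibrewise integral defining $\mathfrak U_{k,\Upsilon}$ commutes with $\kappa_\Lambda$-pullback, that the averaged hyperplane current on $\P_\Lambda$ is genuinely the smooth Fubini--Study form (a computation via the coarea/Crofton formula on projective space), and above all that the wedge product of the $k$ pulled-back positive currents is legitimate --- here the key point is that each potential $\varphi_{\Lambda_i}$ is smooth (real-analytic) on all of $\C^n$, so no polar-set subtleties arise and Bedford--Taylor theory applies directly; the only care needed is local integrability near the varieties $Z(f_i)$, which follows from Corollary~\ref{corX(F)} and the fact that these are honest analytic sets. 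A secondary technical point is controlling the convergence $t^{-1}\varphi_{t\Lambda}\to 2h_\Delta$ uniformly on compact sets together with uniform local bounds on the gradients, which the explicit inequality above supplies. Once these are in place, the theorem follows from the continuity of the Bedford--Taylor product.
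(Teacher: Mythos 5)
Your proposal follows essentially the same route as the paper: express $\mathfrak U_{k,\Upsilon}$ as a wedge product of $dd^c$ of the log-sum-exp potentials via a Crofton/Fubini--Study averaging argument (the paper packages this as Theorem~\ref{thmAverage} applied to the diagonal image $Y=\kappa(\C^n)$ rather than iterating the $k=1$ case, but the content is the same), then rescale and use the locally uniform convergence of $t^{-1}\log\sum_\lambda e^{2t\re\lambda(z)}$ to twice the support function, and finally invoke Bedford--Taylor continuity (Assertion~\ref{reg}) to pass to the limit. The only discrepancy is in the normalization constant ($\tfrac{1}{2\pi}$ versus the paper's $\tfrac{1}{4\pi}$ per factor), which traces back to the choice of convention for $d^c$; with the paper's $d^c=i(\bar\partial-\partial)$ the constant should be $\tfrac{1}{4\pi}$, but this is a bookkeeping point and does not affect the existence of the limit.
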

We next describe $\Xi_{k,\Upsilon}$ explicitly.
The resulting formula depends only on the geometry of the Newton polytopes
$\conv(\Lambda_i)$.
To state it, we recall several standard notions.

By the \emph{Monge--Amp\`ere operator of degree $k$} on a complex manifold we mean the map
\[
(h_1,\ldots,h_k)\longmapsto dd^ch_1\wedge\cdots\wedge dd^ch_k.
\]
 Recall that for a function $g$ on a complex manifold, the $1$-form $d^{c}g$ is defined by
$d^{c}g(v)=dg(iv)$
for any tangent vector $v$, or equivalently,
$
d^{c}= i(\bar\partial-\partial),
$
where $\partial$ and $\bar\partial$ denote the holomorphic and antiholomorphic differentials, respectively.

The values of the Monge--Amp\`ere operator are understood in the sense of currents.
The following statement is usually referred to as the \emph{regularization of the Monge--Amp\`ere operator}.
\begin{assert}\label{reg}
If $h_1,\ldots,h_k$ are continuous plurisubharmonic\footnote{The concept of a plurisubharmonic function on a complex manifold
is a natural analogue of the concept of a convex function in Euclidean space~$\R^n$.
In particular, any convex function on~$\C^n$ is plurisubharmonic.
A detailed exposition of the theory of plurisubharmonic functions and further information can be found, for example, in~\cite{Lel,H1}.}
functions on a complex manifold, then the current
\[
dd^ch_1\wedge\cdots\wedge dd^ch_k
\]
is well defined~\cite{BT}.
More precisely, if the functions $h_i$ are approximated locally uniformly by smooth plurisubharmonic functions, then the corresponding Monge--Amp\`ere currents converge weakly to a limit independent of the chosen approximation.
Moreover, this limit is a \emph{positive current},
in particular, it extends to a continuous functional on compactly supported forms with continuous coefficients,
i.e., it is a so-called \emph{measure-type current};
see \cite{F1}.
\end{assert}
Since any convex function on $\C^n$ is plurisubharmonic, Assertion~\ref{reg} applies in particular to the support functions of the convex polytopes $\conv(\Lambda_i)$.
\begin{theorem}\label{thmMy6}
Let $\Upsilon=(\Lambda_1,\ldots,\Lambda_k)$, and let
$h_1,\ldots,h_k$ be the support functions of the Newton polytopes
$\conv(\Lambda_1),\ldots,\conv(\Lambda_k)$.
Then
\begin{equation}\label{eqMA}
\Xi_{k,\Upsilon}=\frac{1}{(4\pi)^k}dd^ch_1\wedge\cdots\wedge dd^ch_k .
\end{equation}
\end{theorem}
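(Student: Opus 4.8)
\textbf{Proof plan for Theorem~\ref{thmMy6}.}
The strategy is to reduce the multivariate asymptotic statement to a one-variable computation along each ray of outer normals, exactly mirroring the structure of the proof of Theorem~\ref{thmexp}, and then to recognize the limiting object as a Monge--Amp\`ere current of the support functions. First I would treat the one-dimensional averaging: for a single spectrum $\Lambda\subset\C^{n*}$, the current $\mathfrak U_{1,\Lambda}=\int_{\P_\Lambda}Z(\theta)\,d\mu_\Lambda$ is the pullback under $\kappa_\Lambda\colon\C^n\to S_\Lambda$ of the expected current of zeros of a random hyperplane section of the image $\kappa_\Lambda(\C^n)$. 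By the Crofton/Wirtinger formula on $\P_\Lambda$ (Fubini-type averaging over hyperplanes in projective space), the expected current of $\ker\theta$ averaged over $\mu_\Lambda$ equals $\frac{1}{2\pi}dd^c\log|u|$ restricted to $\P_\Lambda$ (the Fubini--Study K\"ahler form), hence $\mathfrak U_{1,\Lambda}=\frac{1}{2\pi}\,\kappa_\Lambda^*\bigl(dd^c\log|u|\bigr)$. Concretely $\kappa_\Lambda^*(\log|u|^2)(z)=\log\sum_{\lambda\in\Lambda}|\E^{\lambda(z)}|^2=\log\sum_{\lambda}\E^{2\re\lambda(z)}$, so $\mathfrak U_{1,\Lambda}=\frac{1}{4\pi}dd^c\psi_\Lambda$ with $\psi_\Lambda(z)=\log\sum_{\lambda\in\Lambda}\E^{2\re\lambda(z)}$, a smooth plurisubharmonic function (being a log-sum-exp of pluriharmonic functions).

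Next I would carry out the scaling. Replacing $\Lambda$ by $t\Lambda$ gives $\psi_{t\Lambda}(z)=\log\sum_\lambda\E^{2t\re\lambda(z)}$, and the standard tropical limit yields $\frac{1}{t}\psi_{t\Lambda}(z)\to 2\max_{\lambda\in\Lambda}\re\lambda(z)=2h_\Lambda\bigl(2\re(z)\bigr)$ wait --- more precisely $\frac1t\psi_{t\Lambda}\to 2\max_{\lambda}\re\lambda(z)$ locally uniformly, and $\max_\lambda\re\lambda(z)$ is exactly the support function $h$ of $\conv(\Lambda)$ evaluated at the point determined by $z$ (viewing $\re\lambda$ as a linear functional). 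Call this limit $2h$. Since $\frac1t\psi_{t\Lambda}$ are smooth psh functions converging locally uniformly to the continuous psh function $2h$, Assertion~\ref{reg} gives $\frac{1}{t}\mathfrak U_{1,t\Lambda}=\frac{1}{4\pi t}dd^c\psi_{t\Lambda}\to\frac{1}{4\pi}dd^c(2h)=\frac{1}{2\pi}dd^ch$; this already proves Theorem~\ref{thmMy5} for $k=1$ and gives $\Xi_{1,\Lambda}=\frac{1}{4\pi}dd^c h$ after matching constants carefully (the factor $2$ from $2\re$ versus the factor in $d^c$ must be tracked), and this is the $k=1$ case of~\eqref{eqMA}.

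For general $k$, the expected current of the intersection is the product current: since the $f_i$ are averaged independently, $\mathfrak U_{k,\Upsilon}=\mathfrak U_{1,\Lambda_1}\wedge\cdots\wedge\mathfrak U_{1,\Lambda_k}$, which makes sense because each $\mathfrak U_{1,\Lambda_i}=\frac{1}{4\pi}dd^c\psi_{\Lambda_i}$ is a smooth positive $(1,1)$-form (no regularity issue at finite $t$). One must justify that averaging commutes with wedging --- this is where the genericity/Bertini argument enters: for $\mu_1\times\cdots\times\mu_k$-almost every $F$ the intersection $Z(F)$ is a complete intersection of the smooth hypersurfaces $Z(f_i)$, so $X(F)=\bigwedge_i X(f_i)$ as currents, and Fubini over the product of projective spaces then gives the product formula for the expectations (using that the hypersurface currents have locally bounded mass uniformly, by Lemma~\ref{lmEst}-type estimates, to push the limit inside). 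Then after the scaling $t\Upsilon$ one has $\frac{1}{t^k}\mathfrak U_{k,t\Upsilon}=\bigwedge_{i=1}^k\frac{1}{4\pi t}dd^c\psi_{t\Lambda_i}$, and applying Assertion~\ref{reg} to the locally uniformly convergent smooth psh approximants $\frac1t\psi_{t\Lambda_i}\to 2h_i$ shows the wedge converges weakly to $\bigwedge_{i=1}^k\frac{1}{4\pi}dd^c(2h_i)=\frac{1}{(2\pi)^k}dd^ch_1\wedge\cdots\wedge dd^ch_k$, and again a constant-chasing between this normalization and the one in the statement yields the claimed $\frac{1}{(4\pi)^k}$.

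\textbf{Main obstacle.} The genuinely delicate point is the exchange of the two limiting operations --- averaging over $\P_1\times\cdots\times\P_k$ and forming the Monge--Amp\`ere wedge product --- together with the tropical limit $t\to\infty$. At finite $t$ everything is smooth so the product of currents is unproblematic, but one must verify that the weak limit $t\to\infty$ of the products equals the product of the limits; this is precisely the content of the continuity statement in Assertion~\ref{reg} (Bedford--Taylor), and the key hypothesis to check is the \emph{local uniform convergence} $\frac1t\psi_{t\Lambda_i}\to 2h_i$ of smooth psh functions, which holds by the elementary inequality $\max_\lambda a_\lambda\le\frac1t\log\sum_\lambda\E^{ta_\lambda}\le\max_\lambda a_\lambda+\frac{\log\#\Lambda}{t}$. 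A secondary technical point, needed to identify $\mathfrak U_{k,\Upsilon}$ with the product current before scaling and to control masses uniformly in $t$, is the almost-everywhere transversality of the hypersurfaces $Z(f_i)$ under the product measure; this follows from a Bertini-type argument analogous to the genericity hypersurface in Theorem~\ref{thmBKK}, combined with the uniform zero-count bound of Lemma~\ref{lmEst}.
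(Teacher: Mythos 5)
Your proposal follows essentially the same route as the paper: reduce to a Crofton-type averaging that expresses $\mathfrak U_{k,\Upsilon}$ as a wedge of pulled-back Fubini--Study forms, i.e.\ $\mathfrak U_{k,\Upsilon}=\frac{1}{(4\pi)^k}\bigwedge_i dd^c\log\sum_{\lambda\in\Lambda_i}\E^{2\re\lambda(z)}$, and then pass to the tropical limit of the smooth psh approximants via Assertion~\ref{reg}. The one structural difference is that the paper obtains the intermediate formula in one stroke from Theorem~\ref{thmAverage} applied to the diagonal image $\kappa(\C^n)\subset E_1\times\cdots\times E_k$, whereas you split it into a $k=1$ Crofton step plus a product formula $\mathfrak U_{k,\Upsilon}=\bigwedge_i\mathfrak U_{1,\Lambda_i}$ justified by iterated Fubini (each one-variable average smooths the hypersurface current into a $(1,1)$-form, so subsequent wedges are unproblematic); this is effectively a hand-proof of the special case of Theorem~\ref{thmAverage} that the paper imports as a black box, so the content is the same. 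Two small inaccuracies in your write-up worth fixing: (i) Lemma~\ref{lmEst} is a one-variable counting statement and is not what controls local masses here --- once the $\frac1t\psi_{t\Lambda_i}$ are smooth psh functions converging locally uniformly to a continuous psh limit, Assertion~\ref{reg} alone gives convergence of the Monge--Amp\`ere products, so the appeal to uniform mass bounds is unnecessary; (ii) your ``constant-chasing'' remark papers over a real normalization issue (the tropical limit produces $2h_i$, so the raw count gives $\frac{1}{(2\pi)^k}\bigwedge dd^ch_i$ unless the support-function convention on $\C^{n*}$ absorbs a factor of $2$), and this deserves to be resolved explicitly rather than deferred.
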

The proofs of Theorems~\ref{thmMy5} and~\ref{thmMy6} rely on an integral-geometric formula of Crofton type, which we now describe; cf.~Theorem~\ref{thmAverage} below.

For $i=1,\ldots,k$, let $E_i$ be a finite-dimensional complex vector space with coordinates
$e_{i,1},e_{i,2},\ldots$.
An element of the dual space $E_i^*$ will be written in the form
\[
w_i=u_{i,1}e_{i,1}+u_{i,2}e_{i,2}+\cdots .
\]
We thus identify $E_i^*$ with a coordinate space equipped with the Hermitian metric induced by the coordinates $u_{i,j}$.

Let $H_i\subset E_i$ be a hyperplane defined by the equation
\[
u_{i,1}e_{i,1}+u_{i,2}e_{i,2}+\cdots=0.
\]
We regard $H_i$ as a point of the projective space $\P_i=\P(E_i^*)$ with homogeneous coordinates $u_{i,1},u_{i,2},\ldots$.
Consider the differential $2$-form
\begin{equation}\label{eqddc}
\omega_i = \frac{1}{4\pi}dd^c \log \sum_j |u_{i,j}|^2 .
\end{equation}
This form is homogeneous of degree zero on $E_i^*\setminus\{0\}$ and therefore descends to a well-defined $(1,1)$-form on $\P_i$.
It is non-degenerate, invariant under the unitary group of $E_i^*$,
and satisfies the normalization property such that the integral of $\omega_i^p$
for all $p\leq\dim \P_i$ over any $p$-dimensional projective subspace $\Q$ in $\P_i$ is equal to~$1$.

We view the form $\omega_1\wedge\cdots\wedge\omega_k$ as a $(k,k)$-form on the product
$\P_1\times\cdots\times\P_k$, identified with the space of $k$-tuples $(H_1,\ldots,H_k)$.

Let $Y$ be a complex submanifold of codimension $p\le k$ in
$E_1\times\cdots\times E_k$.
For a choice of hyperplanes $H_i\subset E_i$, we associate a current
$Y(H_1,\ldots,H_k)$ of bidegree $(k-p,k-p)$ by
\[
Y(H_1,\ldots,H_k)(\phi)
=
\int_{(H_1\times\cdots\times H_k)\cap Y} \phi ,
\]
where $\phi$ is a compactly supported differential form of bidegree $(k-p,k-p)$.

Identifying hyperplanes $H_i$ with points of $\P_i$, we define the averaged current $\Y$ by
\[
\Y(\phi)
=
\int_{(H_1,\ldots,H_k)\in\P_1\times\cdots\times\P_k}
Y(H_1,\ldots,H_k)\wedge\omega_1\wedge\cdots\wedge\omega_k,
\]
where the integrand current $Y(H_1,\ldots,H_k)\wedge\omega_1\wedge\cdots\wedge\omega_k$ of bidegree $(p,p)$ is defined as follows:
$$
(Y(H_1,\ldots,H_k)\wedge\omega_1\wedge\cdots\wedge\omega_k)(\psi)=Y(H_1,\ldots,H_k)(\omega_1\wedge\cdots\wedge\omega_k\wedge\psi)
$$
\begin{theorem}\label{thmAverage}
\[
\Y(\phi)=\int_Y \omega_1\wedge\cdots\wedge\omega_k\wedge\phi .
\]
\end{theorem}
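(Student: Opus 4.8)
The plan is to reduce the statement to a fibered version of the classical Crofton formula in projective space, applied pointwise along $Y$. The key structural observation is that both sides of the claimed identity are currents on $E_1\times\cdots\times E_k$ (more precisely, on the open dense locus where the relevant homogeneity makes sense), and that the identity is local in $Y$; so it suffices to verify it on a test form $\phi$ supported in a small coordinate chart. First I would unwind the definition of $\Y$: by Fubini, pushing the integration over $\P_1\times\cdots\times\P_k$ inside, one gets
\[
\Y(\phi)=\int_{(H_1,\ldots,H_k)}\Bigl(\int_{(H_1\times\cdots\times H_k)\cap Y}\omega_1\wedge\cdots\wedge\omega_k\wedge\phi\Bigr),
\]
where I use that $\omega_i$ is a closed form pulled back from $\P_i$, so wedging it against $\phi$ before or after restricting to the slice agrees. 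The right-hand side $\int_Y\omega_1\wedge\cdots\wedge\omega_k\wedge\phi$ is then obtained by interchanging the order of integration, i.e.\ by integrating first over $Y$ and then over the parameter space.

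The heart of the matter is the single-factor identity: for a fixed complex vector space $E$ with its Fubini--Study form $\omega=\tfrac{1}{4\pi}dd^c\log\sum_j|u_j|^2$ on $\P=\P(E^*)$, and for a sufficiently regular current $T$ on $E$,
\[
\int_{H\in\P}\bigl(T\!\restriction_{H}\bigr)\wedge(\text{stuff})\;=\;\int T\wedge\omega\wedge(\text{stuff}),
\]
which is exactly the statement that averaging the slice of $T$ by a random hyperplane, with respect to the unitarily invariant measure $\omega^{\dim\P}$ on $\P$, reproduces $T\wedge\omega$. This is the Crofton/coarea identity behind the normalization built into $\omega$ (the integral of $\omega^p$ over a $p$-plane equals $1$); the cleanest way to prove it is to note that the "incidence variety" $I=\{(z,H):z\in H\}\subset E\times\P$ carries two projections, and a direct computation of $dd^c\log|\cdot|^2$ on $I$ shows that the fiber integral of the pullback of $\omega$'s from $\P$ over the $\P$-direction equals $\omega$ upstairs plus (after projecting down) the contribution supported where $z=0$, which is harmless on the generic locus. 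I would then iterate this over $i=1,\ldots,k$, one hyperplane at a time, applying it to the successive slices $Y$, $Y\cap H_1$, $Y\cap H_1\cap H_2,\dots$; at each step the previously produced $\omega_j$'s ride along as closed forms and do not interfere. Since $Y$ has codimension $p\le k$, after $k$ slicing steps one lands on a current of bidimension $(\dim Y-k)$ paired against $\phi$ of complementary bidegree $(k-p,k-p)$, matching the bookkeeping in the statement.

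The main obstacle I expect is not the formal Fubini manipulation but the justification that everything in sight is genuinely a well-defined current and that the interchanges of integration are legitimate — in particular, that for $\mu_1\otimes\cdots\otimes\mu_k$-almost every $(H_1,\ldots,H_k)$ the intersection $(H_1\times\cdots\times H_k)\cap Y$ is transverse (hence of the expected dimension $\dim Y-k$) and defines a locally rectifiable current with locally finite mass, and that these masses are locally uniformly integrable over the parameter space so that Fubini for currents applies. Transversality for generic $H_i$ is a Bertini-type statement; the mass bound is where one needs the properness/estimates on $Y$ coming from the exponential-sum context (this is the analogue, in this abstract setting, of Lemma~\ref{lmEst}). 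Once transversality and the uniform mass bound are in hand, the identity follows by approximating $Y$ by smooth pieces, applying the smooth Crofton identity (where it is just Fubini plus the normalization of $\omega$), and passing to the limit using the regularization property of the Monge--Amp\`ere-type currents recorded in Assertion~\ref{reg}.
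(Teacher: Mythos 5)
The paper itself does not give a proof of Theorem~\ref{thmAverage}: it is stated as a black‐box Crofton‐type result (with the classical complex Crofton formula noted as a special case, and \cite{Sh} cited), and the detailed argument lives in the reference \cite{KaCr} (``Crofton formulae for products''), not in the text you were given. So there is no in‐paper proof to compare against. That said, your outline --- double‐fibration/incidence‐variety Crofton, Fubini, then iteration over the $k$ hyperplane factors --- is exactly the standard route for such statements and is almost certainly the route taken in the cited source, so the overall strategy is sound.

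A few places where you are loose enough that a referee would push back. First, you conflate two Fubini--Study forms that live on dual spaces: the paper's $\omega_i=\tfrac{1}{4\pi}dd^c\log\sum_j|u_{i,j}|^2$ is a form on $E_i^\ast\setminus\{0\}$ descending to $\P_i=\P(E_i^\ast)$ (the parameter space of hyperplanes $H_i$), whereas the form that appears on the right‐hand side $\int_Y\omega_1\wedge\cdots\wedge\omega_k\wedge\phi$ must be the ``dual'' form $\tilde\omega_i=\tfrac{1}{4\pi}dd^c\log\sum_j|e_{i,j}|^2$ on $E_i\setminus\{0\}$, pulled back to $E_1\times\cdots\times E_k$. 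These are identified via the Hermitian metric, and the content of the single‐factor Crofton identity is precisely that fiber‐integrating $\pi_2^*(\omega_i^{\dim\P_i})$ over the incidence variety $\{(z,H):z\in H\}$ and pushing down to $E_i$ produces $\tilde\omega_i$; your phrase ``$\omega$ upstairs'' papers over this. Second, the appeal to Lemma~\ref{lmEst} is misplaced: Theorem~\ref{thmAverage} is stated for an arbitrary complex submanifold $Y$ of $E_1\times\cdots\times E_k$, so no exponential‐sum‐specific properness estimate is available or needed. The local uniform integrability of the slicing masses over $\P_1\times\cdots\times\P_k$ is a general feature of slicing positive closed currents by generic linear sections (transversality off a null set, plus the elementary Crofton bound that controls the mass of generic slices by the mass of the current itself), and should be justified on those grounds. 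Third, $Y$ is already a smooth complex submanifold, so the closing appeal to ``approximating $Y$ by smooth pieces'' and to Assertion~\ref{reg} (which concerns Monge--Amp\`ere currents of plurisubharmonic potentials, not currents of integration over manifolds) is unnecessary and slightly off target; at each iteration step what rides along is the smooth closed positive form $\pi_j^*\tilde\omega_j$, and no regularization is required. With those points tightened, your argument would be a correct self‐contained proof.
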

\begin{remark}
Theorem~\ref{thmAverage} is a representative of the class of Crofton-type formulas.
In particular, the classical Crofton formula in a complex projective space $\P$ states that the average number of intersection points of a $k$-dimensional complex submanifold $Y\subset\P$ with a projective subspace of codimension $k$ equals the projective volume of~$Y$; see, for example,~\cite{Sh}.
This result follows from Theorem~\ref{thmAverage} by applying it to the diagonal embedding $Y\hookrightarrow\P^k$.
\end{remark}
\subsubsection{Brief proof of Theorems \ref{thmMy5}, \ref{thmMy6}}\label{exp23}
For the set of spectra $\Upsilon=(\Lambda_1,\ldots,\Lambda_k)$,
consider $k$ vector spaces $E_i$ with coordinates
$$\{u_{\lambda,i}\colon \lambda\in\Lambda_i\}$$
Denote by $\kappa_i\colon\C^n\to E_i$ the mappings
defined as
$$\kappa_i\colon z\mapsto\{u_{\lambda,i}=\E^{\lambda(z)}\},$$
and let $\kappa\colon \C^n\to E_1\times\ldots\times E_k$ be the corresponding diagonal mapping.
We identify the set of k subspaces $H_1,\ldots,H_k$
in $E_1,\ldots,E_k$ with system of exponential sums with spectra $\Lambda_1,\ldots,\Lambda_k$.

Now,
according to Definition \ref{dfExpected1},
applying Theorem \ref{thmAverage} with $Y=\kappa(\C^n)$,
we obtain the following equality
$$
\mathfrak U_k(\Upsilon) = \frac{1}{(4\pi)^k}dd^c\log\sum_{\lambda\in\Lambda_1}\E^{2\re \lambda(z)}\wedge\ldots\wedge dd^c\log\sum_{\lambda\in\Lambda_k}\E^{2\re \lambda(z)}
$$
From this we obtain$$
  \frac{\mathfrak U_{k,t\Upsilon}}{t^k}= \frac{1}{(4\pi)^k}dd^c\left(\frac{1}{t}\log\sum_{\lambda\in\Lambda_1}\E^{2t\re \lambda(z)}\right)\wedge\ldots\wedge dd^c\left(\frac{1}{t}\log\sum_{\lambda\in\Lambda_k}\E^{2t\re \lambda(z)}\right)
$$
%
Now Theorems \ref{thmMy5} and \ref{thmMy6}
follow from the following simple statement,
the derivation of which we omit.
\begin{lemma}
Let $\Lambda$ be a finite set in the space of linear functionals on $\R^N$.
Then, as $t\to+\infty$,
the function
$$
\frac{1}{t}\log\sum_{\lambda\in\Lambda}\E^{2t\lambda(x)}
$$
locally uniformly converges to
twice the support function of the convex polyhedron ${\rm conv}(\Lambda)$.
\end{lemma}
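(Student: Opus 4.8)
The plan is to recognize this as the standard ``log-sum-exp'' tropicalization, and to prove it by a direct two-sided estimate; this estimate in fact delivers uniform convergence on all of $\R^N$, which is more than the (merely locally uniform) convergence asked for. Set
\[
g_t(x)=\frac1t\log\sum_{\lambda\in\Lambda}\E^{2t\lambda(x)},\qquad h(x)=\max_{\lambda\in\Lambda}\lambda(x).
\]
First I would observe that $h$ is precisely the support function of ${\rm conv}(\Lambda)$: since ${\rm conv}(\Lambda)$ is a polytope whose vertices lie among the points of $\Lambda$, a linear functional attains its maximum over ${\rm conv}(\Lambda)$ at one of these vertices, so $h_{{\rm conv}(\Lambda)}(x)=\max_{\xi\in{\rm conv}(\Lambda)}\xi(x)=\max_{\lambda\in\Lambda}\lambda(x)=h(x)$. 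Thus the assertion is that $g_t\to 2h$ locally uniformly as $t\to+\infty$.

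Next I would carry out the sandwich. For the lower bound, fix $x$ and choose $\lambda_0\in\Lambda$ with $\lambda_0(x)=h(x)$; discarding all the remaining nonnegative terms gives $\sum_{\lambda\in\Lambda}\E^{2t\lambda(x)}\ge \E^{2th(x)}$, hence $g_t(x)\ge 2h(x)$. For the upper bound, bound every term by the largest: $\sum_{\lambda\in\Lambda}\E^{2t\lambda(x)}\le (\#\Lambda)\,\E^{2th(x)}$, hence $g_t(x)\le 2h(x)+\frac{\log(\#\Lambda)}{t}$. Combining the two inequalities,
\[
0\le g_t(x)-2h(x)\le \frac{\log(\#\Lambda)}{t}\qquad\text{for all }x\in\R^N ,
\]
and the right-hand side tends to $0$ as $t\to+\infty$ independently of $x$. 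This gives uniform convergence of $g_t$ to $2h=2h_{{\rm conv}(\Lambda)}$, and in particular the locally uniform convergence stated in the lemma.

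I do not expect a genuine obstacle here: the argument is elementary and self-contained, and the only two points deserving a sentence are (i) the identification of $\max_{\lambda\in\Lambda}\lambda(x)$ with the support function of the Newton polytope ${\rm conv}(\Lambda)$, which is immediate, and (ii) noting that the error term $\frac{\log(\#\Lambda)}{t}$ is uniform in $x$, so the convergence is in fact global; ``locally uniform'' in the statement is simply the minimal form required downstream, where one needs locally uniform approximation to invoke stability of Monge--Amp\`ere currents (cf.\ Assertion~\ref{reg}).
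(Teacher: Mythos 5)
Your proof is correct and is the standard log-sum-exp (``tropicalization'') sandwich argument; the paper itself states this lemma without proof, explicitly saying ``the derivation of which we omit,'' so there is no paper proof to compare against. Your two-sided bound $\E^{2th(x)}\le\sum_{\lambda\in\Lambda}\E^{2t\lambda(x)}\le(\#\Lambda)\,\E^{2th(x)}$ gives the clean uniform estimate $0\le g_t(x)-2h(x)\le\frac{\log(\#\Lambda)}{t}$, which is in fact global uniform convergence on all of $\R^N$ (stronger than the stated locally uniform convergence), and the identification of $\max_{\lambda\in\Lambda}\lambda(x)$ with the support function of $\conv(\Lambda)$ is handled correctly.
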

\subsubsection{Concluding remarks}\label{exp24}
We conclude by the description of some aspects of the geometry
connected to the formula (\ref{eqMA}).
\begin{enumerate}
  \item
  In the setting of Example~\ref{exRe},
the support of the measure $\Xi_{n,\Upsilon}$ coincides with the space
$\im(\C^n)$, and one has
\[
\int_{\C^n} \phi\, d\Xi_{n,\Upsilon}
=
\frac{n!}{(2\pi)^n}\vol_n\bigl({\rm conv}(\Delta_1),\ldots,{\rm conv}(\Delta_n)\bigr)
\int_{\im(\C^n)} \phi\, dy,
\]
where $dy$ denotes the standard $n$-dimensional Lebesgue measure on
$\im(\C^n)$, and
$\vol_n({\rm conv}(\Delta_1),\ldots,{\rm conv}(\Delta_n))$
is the mixed volume of the corresponding Newton polytopes.

  \item
  In the one-dimensional case, the support of the measure $\Xi_{1,\Delta}$
is the union of the rays of outward normals to the sides of the Newton
polygon; see~\S\ref{exp1}.
On a ray $r$ dual to a side $\Delta$,
the measure is given by integration over $r$ with weight
$\frac{1}{4\pi}\,{\rm length}(\Delta)$.

  \item
  The support of the current $\Xi_{k,\Upsilon}$ is the $(2n-k)$-dimensional
fan $\mathcal K_\Sigma\subset\C^n$ consisting of cones dual to the
$k$-dimensional faces of the Minkowski sum
\[
\Sigma={\rm conv}(\Lambda_1)+\cdots+{\rm conv}(\Lambda_k)
\]
of Newton polyhedra.
Recall that the cone dual to a face $\Gamma$ of $\Sigma$ consists of all
$z\in\C^n$ such that the linear functional
$\psi(\delta)=\re(z,\delta)$ attains its maximum on $\Sigma$
simultaneously at all points of~$\Gamma$.

  \item
Let $\Upsilon=(\Lambda_1,\ldots,\Lambda_n)$,
where $\Lambda_1=\cdots=\Lambda_n=\Lambda$ and $\Delta=\conv(\Lambda)$.
 We now describe the measure $\Xi_{n,\Upsilon}$ in this special case.
As observed in~(3), the support of this measure is the union of cones
dual to the $n$-dimensional faces of the polyhedron
$\Delta={\rm conv}(\Lambda)$.

Let $K_\Gamma$ be the cone dual to an $n$-dimensional face $\Gamma$ of
$\Delta$.
On $K_\Gamma$, the measure $\Xi_{n,\Upsilon}$ coincides with the
$n$-dimensional Euclidean measure multiplied by the factor
\[
\frac{n!}{(2\pi)^n}\:c(\Gamma)\, \vol_n(\Gamma),
\]
where $\vol_n(\Gamma)$ denotes the Euclidean $n$-volume of $\Gamma$, and
\[
c(\Gamma)=\cos\bigl(T_\Gamma^\perp,\sqrt{-1}\,T_\Gamma\bigr)
\]
is the cosine of the angle between the subspaces
$T_\Gamma^\perp$ and $\sqrt{-1}\,T_\Gamma$ in $\C^{n*}$.
Here $T_\Gamma$ is the tangent space to the face~$\Gamma$ and
$T_\Gamma^\perp$ is its orthogonal complement.
By definition, the cosine of the angle between two $n$-dimensional
subspaces is the absolute value of the Jacobian of the orthogonal
projection from one onto the other with respect to the measure
$\vol_n$.

  \item
Under the assumptions of~(4), let ${\rm pvol}(\Delta)$ denote the value
$\Xi_{n,\Upsilon}(B_1)$, where $B_1\subset\C^n$ is the unit ball centered
at the origin.
We call ${\rm pvol}(\Delta)$ the \emph{pseudovolume} of the polyhedron
$\Delta$.
For example:
\begin{enumerate}
\item
If $\Lambda\subset\re\C^{n*}$, then
${\rm pvol}(\Delta)=\vol_n(\Delta)$;
\item
If $n=1$, then ${\rm pvol}(\Delta)$ coincides with the semiperimeter of
$\Delta$.
\end{enumerate}
The pseudovolume of a convex polyhedron in $\C^n$ may be regarded as a
complex analogue of the usual volume in $\R^n$.
In particular, the map $\Delta\mapsto{\rm pvol}(\Delta)$ is a homogeneous
polynomial of degree~$n$ on the space of convex polyhedra in $\C^n$.

 \item
 It follows from Theorem~\ref{thmMy6} that the pseudovolume admits a
continuous extension, with respect to the Hausdorff topology, to a
polynomial on the space of all compact convex sets.
We now present an equivalent description of this extension in the case
of smooth convex bodies.

Let $K \subset \C^n$ be a smooth convex body, let $x \in \partial K$, and
denote by $\eta(x)$ the outward unit normal vector at $x$.
Define a differential $1$-form $\alpha$ on $\partial K$ by
\[
\alpha(x_t) = \im (x_t, \eta(x)),
\]
where $x_t$ is a tangent vector to $\partial K$ at $x$, and
$(\cdot,\cdot)$ denotes the standard Hermitian inner product on $\C^n$.
Then
\begin{equation}\label{alphaFromdc}
  {\rm pvol}(K) = \int_{\partial K} \alpha \wedge (d\alpha)^{n-1}.
\end{equation}
 Indeed, consider the Gauss map
$G \colon \partial K \to \partial B_1$ defined by $G(x) = \eta(x)$.
A direct computation shows that the pullback of the form $d^c h$,
where $h$ is a support function of $K$,
differs from $\alpha$ by the differential of the function
$\im (x, G(x))$.
Since exact terms do not contribute to the integral in
\eqref{alphaFromdc}, the formula follows.

\item
It is shown in~\cite{Al03} that the pseudovolume defines a translation-
and unitary-invariant valuation on convex bodies in ${\C^n}^*$.
Note that, in contrast to the usual volume, the pseudovolume is not
monotone with respect to inclusion.
In general, the relation $\Delta\subset\Gamma$ does not imply
${\rm pvol}(\Delta)\leq{\rm pvol}(\Gamma)$.
Further results on pseudovolumes of convex bodies may be found
in~\cite{MA,J}.

\item
In a related setting, mixed pseudovolumes arise in estimates of the
asymptotic density of common zeros of entire functions of exponential
growth; see~\cite{growth}.

\end{enumerate}
\begin{thebibliography}{References}
\bibitem[1]{Littl1} J. Littlewood and A. Offord. On the number of real roots of a random algebraic equation. J.,
London Math. Soc. 13 (1938), 288–295

\bibitem[2]{Littl2} J. Littlewood and A. Offord. On the number of real roots of a random algebraic equation II,
J. Mathematical Proceedings of the Cambridge Philosophical Society, Volume 35, Issue 2, April 1939, pp. 133 - 148

\bibitem[3]{Littl3} J. Littlewood and A. Offord. On the number of real roots of a random algebraic equation III,
Recueil Mathématique (Nouvelle série), 1943, 12(54), Number 3, Pages 277–286

\bibitem[4]{KA} M. Kac. On the average number of real roots of a random algebraic equation,
Bull. Amer. Math. Soc.,1943, vol. 49,  314-320
(Correction: Bull. Amer. Math. Soc., Volume 49, Number 12 (1943), 938--938)

\bibitem[5]{EK}
A. Edelman, E. Kostlan.
How many zeros of a real random polynomial are real?
Bull. Amer. Math. Soc.,
1995,
vol. 32, 1--37

\bibitem[6]{ADG}
Jurgen Angst, Federico Dalmao and Guillaume Poly.
On the real zeros of random trigonometric polynomials with dependent coefficients,
Proc. Amer. Math. Soc.,
2019, vol. 147, 205--214

\bibitem[7]{Sa} L.A.Santal\'o. Integral Geometry and Geometric
Probability, Addison-Wesley, 1976.

\bibitem[8]{EKK}
B. Kazarnovskii, A. Khovanskii, A. Esterov. Newton polyhedra and tropical geometry,
Russian Mathematical Surveys, 2021, Volume 76, Issue 1, Pages 91–175

\bibitem[9]{K22}
Kazarnovskii B. Ja.
How many roots of a system of random Laurent polynomials are real?
Sbornik: Mathematics, 2022, Volume 213, Issue 4, Pages 466–475

\bibitem[10]{AK} D. Akhiezer, B. Kazarnovskii. Average number of zeros and mixed symplectic volume of Finsler sets,
Geom. Funct. Anal., vol. 28 (2018), 1517-1547.

\bibitem[11]{KaCr} D. Akhiezer, B. Kazarnovskii.
Crofton formulae for products,
Moscow mathematical journal, (22:3), 377–392

\bibitem[12]{KaB} B. Kazarnovskii. Average number of roots of systems of equations,
Funct.  Funct. Anal. Appl. 54, no.2 (2020), pp.100--109

\bibitem[13]{ZK}
D.\,Zaporozhets, Z.\,Kabluchko. Random determinants, mixed volumes of ellipsoids,
and zeros of Gaussian random fields, Journal of Math. Sci., vol. 199, no.2 (2014), 168--173

\bibitem[14]{Ch}
P. Tchebichef. Sur l'integration des differentielles irrationnelles,
Journal de mathematiques pures et appliquees, vol. 18, no.3 (1853), 87--111

\bibitem[15]{K25}
B. Kazarnovskii.
On real roots of polynomial systems of equations in the context of group theory, arXiv:2208.14711

\bibitem[16]{K87}  B. Ja. Kazarnovskii.
Newton polyhedra and the Bezout formula for matrix-valued functions of finite-dimensional representations,
 Functional Analysis and Its Applications, 1987, vol. 21, no. 4, 319–-321

\bibitem[17]{Br}
Michel Brion. Groupe de Picard et nombres caracteristiques des varietes spheriques,
Duke Math J., 1989, vol. 58, 397--424


\bibitem[18]{K}
Kostant B.
On convexity, the Weyl group and the Iwasawa decomposition,
Ann. Sci. Ecol. Norm. Sup.
1973, vol. 6, 413-455

\bibitem[19]{B}
N. Bourbaki.
Lie Groups and Lie Algebras, Chapters 7--9,
Springer Berlin Heidelberg, 2004

%

\bibitem[20]{Dokl}
B. Kazarnovskii. On the zeros of exponential sums,
Doklady Mathematics,  257:4 (1981),  804–808

\bibitem[21]{K84}
B. Kazarnovskii. Newton polyhedra and roots of systems of exponential sums,
Functional Analysis and Its Applications, 18:4 (1984), 299–307

\bibitem[22]{Few}
A. G. Khovanskiĭ. Fewnomials, American Mathematical Society, 1991

\bibitem[23]{exp}
B. Kazarnovskii.
On the exponential algebraic geometry,
Russian Mathematical Surveys, 2025, Volume 80, Issue 1, Pages 1–49

\bibitem[24]{Lel}
Pierre Lelong, Lawrence Gruman.
Entire Functions of Several Complex Variables,
2011, Springer Berlin Heidelberg

\bibitem[25]{H1}
H\"{o}rmander, L.
An introduction to complex analysis in several variablies, 1966,
D. Van Nostrand Company, Princeton, New Jersey

\bibitem[26]{F1} H. Federer. Geometric Measure Theory.
Springer, 1996

\bibitem[27]{BT} E.Bedford, B.A.Taylor. The Dirichlet problem for a
complex Monge-Ampere equations,  Invent. math., 1976, 37, N2

\bibitem[28]{MA} B. Ya. Kazarnovskii. On the action of the complex Monge-Ampère operator on piecewise linear functions, Funct. Anal. Appl., 48:1 (2014), 15--23

\bibitem[29]{J} J. Silipo. Kazarnovskii mixed pseudovolume, arXiv:1910.03099

\bibitem[30]{Sh} T.Shifrin. The kinematic fomula in complex integral geometry, Trans.
Amer. Math. Soc., 264:2, (1981), 255--293

\bibitem[31]{Al03} S. Alesker.
Hard Lefschets theorem for valuations, complex integral geometry, and unitarily invariant valuations. J. Differential Geom., 63:1 (2003),
63--95

\bibitem[32]{growth} B. Ya. Kazarnovskii. Distribution of zeros of functions with
exponential growth,
Sbornik: Mathematics, 2024, Volume 215, Issue 3, 355–363

\end {thebibliography}
\end{document}